\let\equation=\gather
\let\endequation=\endgather
\numberwithin{equation}{section}
\renewcommand*{\@fnsymbol}[1]{\ensuremath{\ifcase#1\or 1\or 2\or
   3\else\@ctrerr\fi}}
\newtheorem{theorem}{Theorem}[section]
\newtheorem{corollary}[theorem]{Corollary}
\newtheorem{lemma}[theorem]{Lemma}
\newtheorem{proposition}[theorem]{Proposition}
\theoremstyle{definition}
\newtheorem{definition}[theorem]{Definition}
\newtheorem{example}[theorem]{Example}
\theoremstyle{remark}
\newtheorem{remark}[theorem]{Remark}
\newcounter{assum}
\newenvironment{assum}[1][]{\ifx\newenvironment#1\newenvironment\refstepcounter{assum}\fi\equation\tag{\ensuremath{\mathrm{A}\theassum#1}}}{\endequation}
\renewcommand{\L}{\mathfrak{L}}
\renewcommand{\Re}{\mathrm{Re}\,}
\renewcommand{\Im}{\mathrm{Im}\,}
\newcommand{\R}{\mathbb{R}}
\newcommand{\X}{{\mathbb{R}^d}} 
\newcommand{\N}{\mathbb{N}}
\newcommand{\eps}{\varepsilon}
\newcommand{\la}{\lambda}
\newcommand{\ka}{\varkappa}
\newcommand{\kl}{\varkappa_{{\ell}}}
\newcommand{\kn}{\varkappa_{{n\ell}}}
\newcommand{\kam}{\varkappa^-}
\newcommand{\kap}{\varkappa^+}
\newcommand{\kapm}{\varkappa^\pm}
\newcommand{\1}{1\!\!1}
\newcommand{\m}{{\mathfrak{m}}}
\newcommand{\A}{{\mathfrak{a}}}
\newcommand{\h}{{\mathfrak{h}}}
\newcommand{\T}{\mathfrak{t}}
\newcommand{\An}{\mathcal{H}}
\newcommand{\M}{{\mathcal{M}_\theta}(\R)} 
\newcommand{\Vxi}{\mathcal{V}_\xi}
\newcommand{\Wxi}{\mathcal{W}_\xi}
\newcommand{\Uxi}{\mathcal{U}_\xi}
\newcommand{\aplus}{\boldsymbol{a}^+}
\newcommand{\aminus}{\boldsymbol{a}^-}
\newcommand{\aplusminus}{\boldsymbol{a}^\pm}
\newcommand{\newaplus}{a^+}
\newcommand{\newaminus}{a^-}
\newcommand{\newaplusminus}{a^\pm}
\newcommand{\sigmaplus}{\widehat{\sigma}}
 \title{Doubly nonlocal {F}isher--{KPP} equation: Speeds~and~uniqueness of traveling waves}
\author{Dmitri Finkelshtein\thanks{Department of Mathematics,
Swansea University, Singleton Park, Swansea SA2 8PP, U.K. ({\tt d.l.finkelshtein@swansea.ac.uk}).} \and Yuri Kondratiev\thanks{Fakult\"{a}t
f\"{u}r Mathematik, Universit\"{a}t Bielefeld, Postfach 110 131, 33501 Bielefeld,
Germany ({\tt kondrat@math.uni-bielefeld.de}).} \and Pasha Tkachov\thanks{Gran Sasso Science Institute, Viale Francesco Crispi, 7, 67100 L'Aquila AQ, Italy ({\tt pasha.tkachov@gssi.it}).}}
\begin{document}

\maketitle

\begin{abstract}
We study traveling waves for a reaction-diffusion equation with nonlocal anisotropic diffusion and a linear combination of local and nonlocal monostable-type reactions. We describe relations between speeds and asymptotic of profiles of traveling waves, and prove the uniqueness of the profiles up to shifts.

\textbf{Keywords: } nonlocal diffusion,  reaction-diffusion equation,  Fisher--KPP equation,  traveling waves,  minimal speed,  nonlocal nonlinearity,  ani\-so\-tropic kernels 

\textbf{2010 Mathematics Subject Classification:} 35C07,  35B40,  35K57

\end{abstract}

\section{Introduction} 
We will study traveling wave solutions to the equation
\begin{equation}
\begin{aligned}
\dfrac{\partial u}{\partial t}(x,t)&=\kap \int_{\X }\aplus  (x-y)u(y,t)dy-m u(x,t)-u(x,t)\, G\bigl(u(x,t)\bigr),\\
 G\bigl(u(x,t)\bigr)&:= \kl u(x,t) + \kn \int_{\X }\aminus (x-y)u(y,t)dy.
\end{aligned}
\label{eq:basic}
\end{equation}
Here $d\in\N$; $\kap, m>0$ and $\kl, \kn\geq0$  are constants, such that 
\begin{equation}\label{eq:nondegencomp}
      \kam := \kl+\kn > 0;
\end{equation}    
the kernels  $0\leq \aplusminus \in L^{1}(\X)$ are probability densities, i.e. $\int_{\X }\aplusminus (y)dy=1$.

For the case of the local nonlinearity in \eqref{eq:basic}, when $\kn=0$, the equation \eqref{eq:basic}  was considered, in particular, in \cite{CD2007,Yag2009,AGT2012,Gar2011,LSW2010,SLW2011,CDM2008,Sch1980,ZLW2012,SZ2010}. For a nonlocal nonlinearity and, especially, for the case $\kl=0$ in \eqref{eq:basic}, see e.g. \cite{Dur1988,FM2004,FKK2011a,FKKozK2014,PS2005,FKMT2017,FT2017c,YY2013}. For details, see the introduction to \cite{FKT100-1} and also the comments below.

Under assumption
\begin{assum}\label{as:chiplus_gr_m}
\kap >m,
\end{assum}
the equation \eqref{eq:basic} has two constant stationary solutions: $u\equiv0$ and $u\equiv\theta$, where
\begin{equation}\label{theta_def}
  \theta:=\frac{\kap -m}{\kam }>0.
\end{equation}
Moreover, one can then also rewrite the equation in a reaction-diffusion form
\[
  \dfrac{\partial u}{\partial t}(x,t)=\kap \int_{\X }\aplus  (x-y)\bigl( u(y,t)-u(x,t)\bigr) dy+u(x,t)\Bigl(\beta - G\bigl(u(x,t)\bigr)\Bigr),
\]
where $\beta=\kap-m>0$. We treat then \eqref{eq:basic} as a doubly nonlocal Fisher--KPP equation, see the introduction to \cite{FKT100-1} for details.

By a (monotone) traveling wave solution to \eqref{eq:basic} in a direction $\xi\in S^{d-1}$ (the unit sphere in $\X$), we will understand a solution of the form 
\begin{equation}\label{eq:deftrw}
      \begin{gathered}
      u(x,t)=\psi(x\cdot\xi-ct),  \quad t\geq0, \ \mathrm{a.a.}\ x\in\X, \\
      \psi(-\infty)=\theta, \qquad \psi(+\infty)=0,
      \end{gathered}
\end{equation}
where $c\in\R$ is called the speed of the wave and the function $\psi\in\M$ is called the profile of the wave. 
Here $\M$ denotes the set of all decreasing and~right-continuous functions $f:\R\to[0,\theta]$, and $x\cdot \xi$ denotes the scalar product in~$\X$.

The present paper is a continuation of \cite{FKT100-1}; they both are based on our unpublished preprint \cite{FKT2015} and thesis \cite{Tka2017}. In~\cite[Propositions 3.7]{FKT100-1}, we have shown (cf.~also~\cite{CDM2008}) that the study of a traveling wave solution \eqref{eq:deftrw} with a fixed $\xi\in S^{d-1} $ can be reduced to the study of the one-dimensional version of \eqref{eq:basic} with the kernels
\begin{equation}\label{apm1dim}
     \newaplusminus (s):=\int_{\{\xi\}^\bot} \aplusminus (s\xi+\eta) \, d\eta, \quad s\in\R,
\end{equation}
where $\{\xi\}^\bot:=\{x\in\X\mid x\cdot\xi=0\}$. For $d=1$ and $\xi\in S^0=\{-1,1\}$, \eqref{apm1dim} reads as follows: $ \newaplusminus (s)=\aplusminus (s\xi)$, $s\in\R$. Clearly, $\int_\R  \newaplusminus (s)\,ds=1$.

For simplicity, we omit $\xi$ from the notations for functions $\newaplusminus$, assuming that the direction $\xi\in S^{d-1} $ is fixed for the sequel. We denote also 
\begin{equation}\label{eq:defJtheta}
    J_\theta(s):=\kap \newaplus (s)-\theta\kn\newaminus (s), \quad s\in\R.
\end{equation}

Under \eqref{as:chiplus_gr_m}, we assume that
\begin{assum}
\label{as:aplus_gr_aminus}
   J_\theta \geq 0,\quad \text{a.a.}\ s\in\R,
\end{assum}
and that there exists $\mu=\mu(\xi)>0$, such that
\begin{assum}
\label{aplusexpint1}
  \int_\X \aplus (x) e^{\mu x\cdot \xi}\,dx= \int_\R \newaplus (s) e^{\mu s}\,ds<\infty.
\end{assum}
Stress that the assumption \eqref{as:aplus_gr_aminus} is redundant for the case of the local nonlinearity in \eqref{eq:basic}, when $\kn=0$.

Suppose also, that $\aplus$ is not degenerated in the direction $\xi\in S^{d-1} $, i.e.
\begin{assum}\label{as:a+nodeg-mod}
    \begin{gathered}
    \text{there exist $r=r(\xi)\geq0$, $\rho=\rho(\xi)>0$, $\delta=\delta(\xi)>0$, such that}\\
    \newaplus (s)\geq\rho, \text{ for a.a.\! $s\in [r- \delta, r+ \delta]$.}
    \end{gathered}
  \end{assum}
A sufficient condition for \eqref{as:a+nodeg-mod} is that $\aplus(x)\geq\rho'$ for a.a. $x\in\X$ such that $|x-r\xi|\leq \delta'$, for some $\rho',\delta'>0$.

\begin{theorem}[\!\!{\cite[Theorem 1.1, Propositions 3.7, 3.14, 3.15]{FKT100-1}}]\label{thm:trwexists}
Let $\xi\in S^{d-1} $ be fixed and suppose that \eqref{as:chiplus_gr_m}, \eqref{as:aplus_gr_aminus}, \eqref{aplusexpint1} hold. Then there exists $c_*=c_*(\xi)\in\R$, such that, for any $c<c_*$,  a traveling wave solution to \eqref{eq:basic} of the form \eqref{eq:deftrw} with $\psi\in\M$ does not exist; whereas for any $c\geq c_*$,
\begin{enumerate}[label={\arabic*})]
          \item there exists a traveling wave solution to \eqref{eq:basic} with the speed $c$ and a profile $\psi\in\M$ such that  \eqref{eq:deftrw} holds;
          \item if $c\neq0$, then the profile $\psi\in C_b^\infty(\R)$ (the class of infinitely many times differentiable functions on $\R$ with bounded derivatives); if $c=0$ (in the case $c_*\leq 0$), then $\psi\in C(\R)$;
          \item there exists $\mu=\mu( c, \newaplus , \kam, \theta)>0$ such that
              \begin{equation}\label{eq:trwexpint}
                \int_\R\psi(s)e^{\mu s}\,ds<\infty;
              \end{equation}    
          \item if, additionally, \eqref{as:a+nodeg-mod} holds, then, for any $c\neq0$, there exists $\nu>0$, such that $\psi(t)e^{\nu t}$ is~a~strictly increasing function;
          \item if, additionally, \eqref{as:a+nodeg-mod} holds with $r=0$, then the profile $\psi$ is a strictly decreasing function on $\R$.
        \end{enumerate}
\end{theorem}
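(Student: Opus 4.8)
Since this is the central existence-and-qualitative statement of the companion paper \cite{FKT100-1}, I only outline the strategy. The plan is to substitute the ansatz \eqref{eq:deftrw} into \eqref{eq:basic} and invoke the dimensional reduction: by \cite[Proposition 3.7]{FKT100-1} it suffices to find $c\in\R$ and $\psi\in\M$ with $\psi(-\infty)=\theta$, $\psi(+\infty)=0$ solving the one-dimensional traveling-wave equation
\[
  -c\,\psi'(s)=\kap(\newaplus*\psi)(s)-m\,\psi(s)-\psi(s)\bigl(\kl\psi(s)+\kn(\newaminus*\psi)(s)\bigr),\qquad s\in\R .
\]
Linearizing at the unstable state $\psi=0$ suggests the threshold: with $H(\la):=\int_\R\newaplus(s)e^{\la s}\,ds$, finite on a maximal interval $(0,\mu_0)$ (possibly $\mu_0=+\infty$) by \eqref{aplusexpint1}, put $\mathcal{L}_c(\la):=\kap H(\la)-m-c\la$. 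Since $\mathcal{L}_c(0)=\kap-m=\beta>0$ by \eqref{as:chiplus_gr_m} and $\la\mapsto\mathcal{L}_c(\la)$ is convex, there is a threshold $c_*\in\R$ — the infimum of the $c$ for which $\mathcal{L}_c$ vanishes somewhere on $(0,\mu_0)$ — and the exponential asymptotics of profiles (step below) show it to be the minimal speed; for $c>c_*$ the equation $\mathcal{L}_c(\la)=0$ has a smallest root $\mu\in(0,\mu_0)$, which will serve as the decay rate.

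For \emph{existence}, for each $c\ge c_*$ I would construct a supersolution of the form $\overline\psi(s)=\min\{\theta,e^{-\mu s}\}$ (with a logarithmic correction at $c=c_*$) and a subsolution $\underline\psi(s)=\max\{0,e^{-\mu s}-q\,e^{-(\mu+\eps)s}\}$, choosing $q,\eps>0$ by means of $\mathcal{L}_c$. The decisive structural input is \eqref{as:aplus_gr_aminus}: the condition $J_\theta\geq0$, with $J_\theta$ as in \eqref{eq:defJtheta}, is exactly what renders the relevant solution map $\psi\mapsto\kap(\newaplus*\psi)-\theta\kn(\newaminus*\psi)$, hence the iteration (or Schauder) operator, order preserving on $[0,\theta]$-valued functions despite the nonlocal competition term. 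I would then iterate monotonically between $\underline\psi$ and $\overline\psi$ (or solve truncated problems on $[-N,N]$ and pass to $N\to\infty$), using monotonicity of the iterates together with Helly's selection theorem to extract a limit $\psi\in\M$, normalizing by, say, $\psi(0)=\theta/2$ to keep it nontrivial, and reading $\psi(\pm\infty)$ off the sub/supersolution sandwich. \emph{Non-existence} for $c<c_*$ follows because any profile has a finite exponential moment with decay rate a positive zero of $\mathcal{L}_c$ (next paragraph), which is absent when $c<c_*$; the degenerate case $c=0$, possible only if $c_*\le0$, is treated directly since the equation then has no derivative. I expect this pair of statements — existence for every $c\ge c_*$ and the matching non-existence below $c_*$, i.e.\ sharpness of the dispersion threshold — to be the main obstacle, precisely because the nonlocal competition $\kn\int\newaminus*\psi$ destroys the naive comparison principle and everything rests on the residual monotonicity extracted from \eqref{as:aplus_gr_aminus}.

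For \emph{smoothness} (item 2): when $c\ne0$ rewrite the traveling-wave equation as $\psi'=c^{-1}\bigl(m\psi+\psi\,G(\psi)-\kap(\newaplus*\psi)\bigr)$; since $\newaplus*\psi$ is continuous (an $L^1$--$L^\infty$ convolution) and $\psi$ bounded, integration shows $\psi$ is Lipschitz, hence continuous, hence the right-hand side is continuous, hence $\psi\in C_b^1(\R)$, and $\psi\in C_b^k\Rightarrow\newaplus*\psi\in C_b^k\Rightarrow\psi\in C_b^{k+1}$ bootstraps to $\psi\in C_b^\infty(\R)$. When $c=0$ the equation reduces to the pointwise identity $m\psi+\kl\psi^2+\kn\psi\,(\newaminus*\psi)=\kap(\newaplus*\psi)$, whose coefficients are continuous in $s$, so solving the quadratic (affine, if $\kl=0$) for the nonnegative root gives $\psi\in C(\R)$. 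For item 3, near $+\infty$ one has $G(\psi)\to0$, so $\psi$ is a subsolution of the linear equation $-c\psi'=\kap(\newaplus*\psi)-m\psi$; comparing with the exact solution $e^{-\mu s}$ of its characteristic equation $\mathcal{L}_c(\mu)=0$, again legitimate under \eqref{as:aplus_gr_aminus}, yields $\psi(s)\le Ce^{-\mu s}$ and hence \eqref{eq:trwexpint}.

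Items 4 and 5 both exploit \eqref{as:a+nodeg-mod}, which provides an interval on which $\newaplus\ge\rho$, hence a strong-maximum-principle effect for the nonlocal operator. For item 5 ($r=0$): if $\psi$ were constant on a maximal nondegenerate interval, substituting into the traveling-wave equation and using $\newaplus\ge\rho$ on $[-\delta,\delta]$ forces $\psi$ constant on a strictly larger interval, hence on all of $\R$ by connectedness, contradicting $\psi(-\infty)=\theta\ne0=\psi(+\infty)$; so $\psi$ is strictly decreasing. For item 4 ($c\ne0$): put $\varphi(t)=\psi(t)e^{\nu t}$ with $\nu\in(0,\mu)$; then $\psi(-\infty)=\theta$ and the decay from item 3 give $\varphi(\pm\infty)=0$, and a sliding argument on the tilted equation satisfied by $\varphi$ (still order preserving under \eqref{as:aplus_gr_aminus}) shows $\varphi$ has no interior local minimum, i.e.\ it is strictly increasing. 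A secondary technical difficulty throughout is the possibility $\mu_0<\infty$ with the minimizing $\la$ not attained, circumvented by approximating $\newaplus$ by kernels with lighter tails and passing to the limit in the constructions above.
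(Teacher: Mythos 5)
Your outline takes a genuinely different route from the companion paper's, and it also contains a gap in the non-existence half.

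On the route: the proof of Theorem~\ref{thm:trwexists} in \cite{FKT100-1} rests on Yagisita's abstract monostable result \cite[Theorem~5]{Yag2009}. One checks that the nonlinear evolution generated by the one-dimensional reduction of \eqref{eq:basic} is order preserving on $[0,\theta]$-valued functions (this is exactly where \eqref{as:aplus_gr_aminus} enters, as you correctly identify), and Yagisita's theorem then produces a threshold $c_*$ together with monotone profiles for every $c\geq c_*$ \emph{abstractly}, without linearizing at $0$. That is why Theorem~\ref{thm:trwexists} can be stated under \eqref{as:chiplus_gr_m}, \eqref{as:aplus_gr_aminus}, \eqref{aplusexpint1} only, with no appeal to \eqref{boundedkernels} or \eqref{firstmoment}. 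Your sub/super-solution and monotone-iteration construction is the classical alternative, closer in spirit to \cite{CDM2008,Sch1980,ZLW2012}, and plausibly works for the existence half; it is simply not what the authors do.

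The genuine gap concerns non-existence for $c<c_*$. You \emph{define} $c_*$ as the dispersion threshold $\inf_{\la>0}\frac{1}{\la}\bigl(\kap(\L\newaplus)(\la)-m\bigr)$ and then argue non-existence below it by asserting that any profile decays with a rate that is a positive zero of the characteristic function. But the statement that $\sigma(\psi_c)$ is a root of $\h_{\xi,c}$ is precisely \eqref{speedviaabs} of Theorem~\ref{thm:trwcandpsi}, a result of the \emph{present} paper proved under the additional hypotheses \eqref{boundedkernels} and \eqref{firstmoment} through the Laplace-transform machinery of Lemma~\ref{lem:allaboutLapl} and Steps~2--4 of the proof of Theorem~\ref{thm:speedandprofile}. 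In particular the case $\sigma(\psi)=\sigma(\newaplus)$ — which you dismiss as a ``secondary technical difficulty'' — is exactly the hard case, handled in Step~4 by a nontrivial truncation-and-approximation argument. The only inequality available from \cite{FKT100-1} is $c_*(\xi)\leq\inf_{\la>0}G_\xi(\la)$, recorded here as \eqref{ineqwillbeeq}; you cannot presume the reverse inequality when proving Theorem~\ref{thm:trwexists}. Your outline thus conflates Theorem~\ref{thm:trwexists} (abstract existence of a minimal speed, under weaker hypotheses) with Theorem~\ref{thm:trwcandpsi} (identification of that speed with the dispersion threshold, which requires the extra moment conditions and is the first main new result of this paper).

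The remaining items are fine in outline: the bootstrap for $c\neq0$, the pointwise algebraic identity for $c=0$, the comparison with the linearized equation for \eqref{eq:trwexpint}, and the nonlocal strong-maximum-principle mechanism for items~4--5 all match standard practice. One local correction: the order-preserving rewrite should be organized around $L_\theta\psi:=J_\theta*\psi-(\kap-\kn\theta)\psi$, exactly as in the proof of Theorem~\ref{thm:tr_w_uniq}, rather than around $\kap\newaplus*\psi-\theta\kn\newaminus*\psi$ alone, because the actual competition term in \eqref{eq:trw} is the quadratic $-\kn\psi(\newaminus*\psi)$, and $\psi\leq\theta$ only bounds it from one side; making the iteration monotone requires carrying the full linearization around $\theta$ consistently.
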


The smoothness of the profile $\psi$ implies, see \cite[Proposition 3.11]{FKT100-1} for details, that $\psi$ satisfies the equation
\begin{equation}
            c\psi'(s) +\kap (\newaplus*\psi)(s) -m\psi(s)
              -\kn\psi(s) (\newaminus*\psi)(s) -\kl \psi^2(s)=0 \label{eq:trw}
\end{equation}  
for all $s\in\R$. Here $*$ denotes the classical convolution of functions on $\R$, i.e. 
\[
  (\newaplusminus*\psi)(s):=\int_\R  \newaplusminus (s-\tau)\psi(\tau)\,d\tau , \quad s\in\R.
\] 

To study \eqref{eq:trw}, we will use a bilateral-type Laplace transform
\begin{equation}\label{Laplace}
(\L f)(z)=\int_\R f(s)e^{z s}\,ds,\quad \Re z>0, \ f\in L^\infty(\R).
\end{equation}
For each $f\in L^\infty(\R)$, there exists $\sigma(f)\in[0,\infty]$, called the abscissa of $f$, such that the integral in \eqref{Laplace} is convergent for $0<\Re z<\sigma(f)$ and divergent for $\Re z>\sigma(f)$, see Lemma~\ref{lem:allaboutLapl} below for details.

We assume that 
\begin{assum}\label{boundedkernels}
    \newaplus \in L^\infty(\R),
\end{assum}
that is evidently fulfilled if e.g. $\aplus \in L^\infty(\X)$. Then, under \eqref{aplusexpint1} and \eqref{boundedkernels}, there exists $\sigma(\newaplus )\in(0,\infty]$. Similarly, 
because of \eqref{eq:trwexpint}, for any profile $\psi$ of a traveling wave solution to \eqref{eq:basic}, there exists 
$\sigma(\psi)\in(0,\infty]$.

Finally, for the fixed $\xi\in S^{d-1}  $, we assume that
  \begin{assum}\label{firstmoment}
   \int_\X \lvert x\cdot \xi \rvert \, \aplus (x)\,dx=\int_\R \lvert s \rvert \newaplus(s)\,ds <\infty.
  \end{assum}
  Under assumption \eqref{firstmoment}, we define
\begin{equation}\label{firstdirmoment}
\m_\xi:=\int_\X x\cdot \xi \, \, \aplus (x)\,dx=\int_\R s\, \newaplus(s)\,ds.
\end{equation}

We formulate now the first main result of the present paper.
\begin{theorem}\label{thm:trwcandpsi}
Let, for the fixed $\xi\in S^{d-1} $, the conditions \eqref{as:chiplus_gr_m}--\eqref{firstmoment} hold. Let $c_*=c_*(\xi)\in\R$ be the minimal traveling wave speed according to Theorem~\ref{thm:trwexists}, and let, for any $c\geq c_*$, the function $\psi=\psi_c\in\M$ be a traveling wave profile corresponding to the speed~$c$.  Then
\begin{enumerate}
  \item There exists a unique $\la_*\in\R$, such that
\begin{equation}
 \begin{aligned}
      c_*&=\min_{\la>0}\frac{1}{\la}\biggl(\kap \int_\R \newaplus(s)e^{\la s}\,ds -m\biggr)\\&=
\frac{1}{\la_*}\biggl(\kap \int_\R \newaplus(s)e^{\la_* s}\,ds -m\biggr)>\kap \m_\xi.
 \end{aligned}\label{minspeed}
\end{equation}
\item For any $c\geq c_*$ the abscissa of the corresponding profile $\psi_c$ is finite:
\begin{equation}
    \label{finitespeed}
\sigma(\psi_c)\in(0,\la_*],
\end{equation}
and the mapping $(0,\la_*]\ni \sigma(\psi_c) \mapsto c\in[c_*,\infty)$ is a (strictly) decreasing bijection, given by
\begin{equation}\label{speedviaabs}
c=\frac{1}{\sigma(\psi_c)}\biggl(\kap \int_\R \newaplus(s)e^{\sigma(\psi_c)\, s}\,ds -m\biggr).
\end{equation}
In particular, 
\begin{equation}\label{eq:argminofGisabs}
    \sigma(\psi_{c_*})=\la_*.
\end{equation}
\item For any $c\geq c_*$,
\begin{equation}\label{Laplinfatabs}
(\L \psi_c) \bigl(\sigma(\psi_c)\bigr) =\infty.
\end{equation}
\end{enumerate}
\end{theorem}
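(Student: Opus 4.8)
The plan is to run everything through the bilateral Laplace transform \eqref{Laplace} and to reduce the statement to the geometry of the \emph{dispersion function}
\[
   H(\la):=\frac{1}{\la}\bigl(\kap(\L\newaplus)(\la)-m\bigr),\qquad 0<\la<\sigma(\newaplus).
\]
First I would record the convex-analytic properties of $H$. The map $\la\mapsto(\L\newaplus)(\la)$ is smooth and strictly convex on $(0,\sigma(\newaplus))$, with second derivative $\int_\R s^2\newaplus(s)e^{\la s}\,ds>0$ and limit $1$ at $0+$; hence \eqref{as:chiplus_gr_m} forces $H(\la)\to+\infty$ as $\la\downarrow0$, while \eqref{as:a+nodeg-mod} makes $(\L\newaplus)$, and so $H$, blow up at $\sigma(\newaplus)$ when the transform diverges there, so $H$ attains a minimum at some $\la_*\in(0,\sigma(\newaplus)]$. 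With $g(\la):=\kap(\L\newaplus)(\la)-\la\kap(\L\newaplus)'(\la)$ one has $H'(\la)=\la^{-2}\bigl(m-g(\la)\bigr)$, $g'(\la)=-\la\kap(\L\newaplus)''(\la)<0$, $g(0+)=\kap>m$, so $g$ is strictly decreasing and $H$ has at most one critical point; together with $H(0+)=+\infty$ this makes $\la_*$ unique and $H$ strictly decreasing on $(0,\la_*]$, which is the first equality in \eqref{minspeed}. Finally Jensen's inequality $(\L\newaplus)(\la)\ge e^{\la\m_\xi}\ge 1+\la\m_\xi$ gives $H(\la)\ge\frac{\kap-m}{\la}+\kap\m_\xi>\kap\m_\xi$ for every $\la$, which will yield the strict inequality in \eqref{minspeed} once $c_*=\min_{\la>0}H$ is identified.

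Next, fix a profile $\psi=\psi_c\in\M$ of a wave of speed $c$; by \eqref{eq:trwexpint}, $\sigma(\psi)\in(0,\infty]$. Since $0\le\psi\le\theta$, the nonlinear terms obey $0\le\kn\psi(\newaminus*\psi)+\kl\psi^2\le(\kap-m)\psi$, so $Q(\la):=\L\bigl(\kn\psi(\newaminus*\psi)+\kl\psi^2\bigr)(\la)$ is finite and positive on $(0,\sigma(\psi))$ (the integrand is positive on the half-line where $\psi>0$, and $\kam>0$). Using $\psi(s)e^{\la s}\to0$ at $\pm\infty$ for $0<\la<\sigma(\psi)$ to integrate by parts and the convolution theorem, \eqref{eq:trw} yields
\begin{equation}\label{eq:Lapid}
   (\L\psi)(\la)\,P(\la)=Q(\la),\qquad P(\la):=\kap(\L\newaplus)(\la)-m-c\la,
\end{equation}
on $0<\la<\min\{\sigma(\newaplus),\sigma(\psi)\}$. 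In particular $P(\la)=Q(\la)/(\L\psi)(\la)>0$ there --- equivalently $H(\la)>c$ on that strip --- and, from $Q\le(\kap-m)(\L\psi)$, also $P(\la)\le\kap-m$. The key technical point is that $Q$ has a \emph{strictly larger} abscissa than $\psi$: splitting $\la=\la_1+\la_2$ and pushing $e^{\la_2 s}$ into the convolution one checks $\sigma\bigl(\psi(\newaminus*\psi)\bigr)\ge\sigma(\psi)+\min\{\sigma(\psi),\sigma(\newaminus)\}$ and $\sigma(\psi^2)\ge2\sigma(\psi)$, where $\sigma(\newaminus)\ge\sigma(\newaplus)>0$ because \eqref{as:aplus_gr_aminus} gives $\theta\kn\newaminus\le\kap\newaplus$ (when $\kn=0$ that term drops out). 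Hence $\sigma(Q)>\sigma(\psi)$, so $Q$ is analytic across $\la=\sigma(\psi)$ with $Q(\sigma(\psi))\in(0,\infty)$.

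The heart of the argument now splits into three steps. (i) $\sigma(\psi)\le\sigma(\newaplus)$: otherwise, for $\la\in(\sigma(\newaplus),\sigma(\psi))$, integrating \eqref{eq:trw} against $e^{\la s}$ over $\R$ would make the term $\kap\int_\R(\newaplus*\psi)(s)e^{\la s}\,ds=\kap(\L\newaplus)(\la)(\L\psi)(\la)$ equal to $+\infty$ while every other term is finite --- a contradiction; and $\sigma(\psi)=\infty$ with $\sigma(\newaplus)=\infty$ is excluded because $P\le\kap-m$ would force $\kap\int_\R\newaplus(s)(e^{\la s}-1)\,ds\le c\la$ for all $\la$, contradicting the super-linear growth guaranteed by \eqref{as:a+nodeg-mod}. (ii) $(\L\psi)(\sigma(\psi))=\infty$, which is \eqref{Laplinfatabs}: for $c\neq0$ this follows from the exponential lower bound in Theorem~\ref{thm:trwexists}(4), which in fact identifies $\sigma(\psi)$ with the largest $\nu$ for which $\psi(s)e^{\nu s}$ is non-decreasing, forcing $\int_\R\psi(s)e^{\sigma(\psi)s}\,ds=\infty$; the case $c=0$ (possible only when $c_*\le0$) is handled instead through the Pringsheim--Landau singularity theorem for Laplace transforms of nonnegative functions (cf.\ Lemma~\ref{lem:allaboutLapl}), since $\sigma(\psi)$ is a singular point of $\L\psi$ whereas $Q$ is analytic there. (iii) \eqref{speedviaabs}: since $\sigma(\psi)\le\sigma(\newaplus)$, $P$ has a finite left limit at $\sigma(\psi)$; letting $\la\uparrow\sigma(\psi)$ in \eqref{eq:Lapid} with $(\L\psi)(\la)\to\infty$ (by (ii)) and $Q(\la)\to Q(\sigma(\psi))\in(0,\infty)$ forces that limit to be $0$, i.e.\ $\kap(\L\newaplus)(\sigma(\psi))-m-c\,\sigma(\psi)=0$, which is \eqref{speedviaabs}. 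I expect step (iii), together with the abscissa bookkeeping of step (i) and the strict gain $\sigma(Q)>\sigma(\psi)$, to be the main obstacle, the delicate point being the borderline case $\sigma(\psi)=\sigma(\newaplus)$.

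It remains to assemble the rest. Since $H>c$ on $(0,\sigma(\psi_c))$ while $H(\sigma(\psi_c))=c$ by \eqref{speedviaabs}, the minimiser $\la_*$ cannot lie in $(0,\sigma(\psi_c))$ (otherwise $c<H(\la_*)=\min_{\la>0}H\le c$), so $\sigma(\psi_c)\in(0,\la_*]$ --- the first assertion of \eqref{finitespeed}. In particular $c_*=H(\sigma(\psi_{c_*}))\ge\min_{\la>0}H$; conversely, if $\min_{\la>0}H<c_*$, then for any $c\in(\min_{\la>0}H,c_*)$ the sub-/super-solution construction underlying Theorem~\ref{thm:trwexists} (supersolution built from $e^{-\la s}$ with $\la$ the smaller root of $H(\la)=c$) produces a traveling wave of speed $c$, contradicting the minimality of $c_*$; hence $c_*=\min_{\la>0}H=H(\la_*)$, which with the first paragraph gives $c_*>\kap\m_\xi$ and completes \eqref{minspeed}. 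Finally $H$ restricted to $(0,\la_*]$ is a strictly decreasing continuous bijection onto $[\min_{\la>0}H,\infty)=[c_*,\infty)$, so \eqref{speedviaabs} shows that $\sigma(\psi_c)$ is determined by $c$ alone (independently of the chosen profile) and that the map $\sigma(\psi_c)\mapsto c$ of \eqref{finitespeed} is exactly this bijection; evaluating at $c=c_*$ yields \eqref{eq:argminofGisabs}. The inequality $c_*\le\min_{\la>0}H$ is a secondary subtlety, not a pure Laplace-transform statement but one that relies on the existence machinery behind Theorem~\ref{thm:trwexists}.
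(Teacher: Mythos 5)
Your overall framework — pass \eqref{eq:trw} through the bilateral Laplace transform, read off the identity $(\L\psi)(\la)\bigl(\kap(\L\newaplus)(\la)-m-c\la\bigr)=Q(\la)$ with $\sigma(Q)>\sigma(\psi)$, and use positivity and singularity arguments to pin down $\sigma(\psi)$ — is exactly the paper's. Statement~1 (strict convexity of $\la\mapsto(\L\newaplus)(\la)$, uniqueness of $\la_*$, and the inequality $>\kap\m_\xi$ via Jensen) matches Proposition~\ref{prop:infGisreached} in spirit. However, two points in your argument do not hold up.

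First, your justification of \eqref{Laplinfatabs}. You claim that Theorem~\ref{thm:trwexists}(4) ``identifies $\sigma(\psi)$ with the largest $\nu$ for which $\psi(s)e^{\nu s}$ is non-decreasing, forcing $(\L\psi)(\sigma(\psi))=\infty$.'' That theorem only asserts the existence of \emph{some} $\nu>0$ with $\psi(t)e^{\nu t}$ increasing; it says nothing about $\nu=\sigma(\psi)$, and a function can perfectly well have $\sigma(\psi)=\la$ with $(\L\psi)(\la)<\infty$ and $\psi(t)e^{\la t}$ eventually decreasing (e.g.\ $\psi(t)=e^{-\la t}/t^{2}$ for large $t$). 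You then feed this unestablished fact into (iii) to conclude \eqref{speedviaabs}, so the derivation of \eqref{speedviaabs} inherits the gap. The paper goes the other way round: it first establishes \eqref{speedviaabs} (by showing that $\la_c$, the smaller root of $G_\xi(\la)=c$, equals $\sigma(\psi)$ via the singularity/analyticity comparison in \eqref{rewriting}), and only then deduces \eqref{Laplinfatabs} from the positivity of the numerator in \eqref{neweq}.

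Second, and more seriously, you flag ``the delicate point being the borderline case $\sigma(\psi)=\sigma(\newaplus)$'' and leave it there. This is not a side issue: when $\sigma(\psi)=\sigma(\newaplus)$ the function $P$ (equivalently $G_\xi$) has its own singularity at $\sigma(\psi)$, so the analytic-continuation/singularity argument that rules out $c<\inf_\la G_\xi(\la)$ in the sub-critical case collapses, and the identification $\sigma(\psi)=\la_c$ (hence \eqref{speedviaabs}, \eqref{finitespeed}, \eqref{Laplinfatabs}) is no longer immediate. In the paper this is precisely Step~4 of the proof of Theorem~\ref{thm:speedandprofile}: one truncates $\newaplusminus$ to $\newaplusminus_{R_n}$ (so that $\sigma(\newaplus_n)=\infty$), applies \cite[Proposition~2.10]{FKT100-1} and Yagisita's theorem to produce traveling waves for the truncated equations, and then shows $\inf_\la G_\xi^{(n)}(\la)\to\inf_\la G_\xi(\la)$ through a careful monotonicity analysis of $\T_\xi^{(n)}$ and $\la_*^{(n)}$, including the limits \eqref{finiteleftlims}. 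None of this appears in your sketch, and without it the theorem is established only when $\newaplus\in\Vxi$ (i.e.\ when $\la_*<\sigma(\newaplus)$), which excludes exactly the regime the paper is at pains to cover.

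The closing paragraph, where you infer $\sigma(\psi_c)\in(0,\la_*]$ and the bijection from ``$H>c$ on $(0,\sigma(\psi_c))$'' together with ``$H(\sigma(\psi_c))=c$,'' is fine \emph{once} \eqref{speedviaabs} is available for all $c\ge c_*$; but since the two gaps above are exactly what is needed to get \eqref{speedviaabs} in the critical case, that paragraph currently rests on sand.
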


Note that, in light of \eqref{minspeed}, the kernel $\aplus$ may be so \emph{slanted} to the direction opposite to $\xi$, that $c_*(\xi)<0$. A sufficient condition to exclude this, by the inequality in \eqref{minspeed}, is that $\m_\xi=0$; in particular, this evidently holds if $\newaplus$ is symmetric.

We will show also that $\sigma(\psi_{c_*})=\la_*\leq \sigma(\newaplus)$.
We will distinguish two cases: the non-critical case when $\sigma(\psi_{c_*})< \sigma(\newaplus)$, and the critical case when $\sigma(\psi_{c_*})= \sigma(\newaplus)$.
Note that a kernel $\newaplus$ which is compactly supported or decreases faster~than any exponential function corresponds to the non-critical case, as then  $\la_*<\infty=\sigma(\newaplus)$.

The critical case is characterized by the following conditions (cf. Proposition~\ref{prop:Gnegholds} and Definition~\ref{def:VWxi} below)
\begin{gather}
    \sigmaplus :=\sigma(\newaplus )<\infty, \qquad \int_\R (1+|s|)\newaplus (s)e^{\sigmaplus  s}\,ds<\infty,\label{eq:critcase}\\
    m\leq \kap \int_\R (1-\sigmaplus  s)\newaplus (s)e^{\sigmaplus  s}\,ds. \label{eq:ineqform}
\end{gather}
Note that, informally, \eqref{eq:ineqform} implies upper bounds for both $m$ and $\sigmaplus $; cf. also the example \eqref{exofaintro} below.

Our second main result is about the asymptotic and the uniqueness (up to a shift) of the profile for a traveling wave with a given speed $c\geq c_*(\xi)$, $c\neq0$. 

\begin{theorem}\label{thm:asympandunic} 
Let $\xi\in S^{d-1} $ be fixed, and let conditions \eqref{as:chiplus_gr_m}--\eqref{firstmoment} hold. Let $c_*=c_*(\xi)\in\R$ be the minimal traveling wave speed given by \eqref{minspeed}, and let, for any speed $c\geq c_*$,  $\psi_c\in\M$ be the corresponding profile with the abscissa $\sigma(\psi_c)$.  
If \eqref{eq:critcase} holds and if, cf.~\eqref{eq:ineqform}, for $\sigmaplus=\sigma(\newaplus )$,
\begin{equation}\label{eq:critcasem}
    m=\kap \int_\R (1-\sigmaplus  s)\newaplus (s)e^{\sigmaplus  s}\,ds,
\end{equation}
we assume, additionally, that
\begin{equation}\label{secmomaddintro}
  \int_\R s^2 \newaplus (s)e^{\sigmaplus  s}\,ds<\infty.
\end{equation}
Let $c \geq c_*$ and $c\neq0$; then the following holds.
\begin{enumerate}[label=\arabic*)]
  \item There exists $D>0$, such that
\begin{equation}\label{eq:trw_asymptintro}
\psi_c(s)\sim D\,s^{j-1}\, e^{-\sigma(\psi_c) s},\quad s\to  \infty.
\end{equation}
Here $j=1$ in two cases: 1)~$c>c_*$; 2)~$c=c_*$ and \eqref{eq:critcase} holds as well as  the strict inequality in \eqref{eq:ineqform}. Otherwise, $j=2$, i.e. when $c=c_*$ and either \eqref{eq:critcase} fails or both \eqref{eq:critcase} and \eqref{eq:critcasem} hold. Moreover, $D=D_j$ may be chosen equal to $1$ by a shift of $\psi_c$.
\item If, additionally, there exist $\rho,\delta>0$, such that
\begin{assum}\label{as:aplus-aminus-is-pos}
    J_\theta(s)\geq\rho, \text{ for a.a. }  |s|\leq \delta,
\end{assum} 
then the traveling wave profile $\psi_c$ is unique up to a shift.
\end{enumerate}
\end{theorem}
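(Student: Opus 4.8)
My plan is to feed the profile equation \eqref{eq:trw} into the bilateral transform $\L$ of \eqref{Laplace}. Write $\la_c:=\sigma(\psi_c)$, $P:=\L\psi_c$, $G(z):=\kap(\L\newaplus)(z)-cz-m$ and $Q:=\kn\,\L\bigl(\psi_c\,(\newaminus*\psi_c)\bigr)+\kl\,\L(\psi_c^2)$. On the open strip $0<\Re z<\la_c$ the transform of \eqref{eq:trw} is legitimate (in particular $\L\psi_c'=-zP$, since $\psi_c$ is bounded and non-increasing with $\psi_c\to\theta$ at $-\infty$ and $\psi_c'\to0$ at $\pm\infty$) and collapses to the algebraic identity $G(z)P(z)=Q(z)$, i.e.\ $P=Q/G$. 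By \eqref{speedviaabs}, $\la_c$ is a zero of $G$, and by \eqref{Laplinfatabs} the quotient $P$ is genuinely singular there. Monotonicity of $\psi_c$ with $\sigma(\psi_c)=\la_c$ yields $\psi_c(s)=o(e^{-\mu s})$ for each $\mu<\la_c$; combined with \eqref{as:aplus_gr_aminus} — which forces $\sigma(\newaminus)\ge\sigmaplus\ge\la_c$ and, in the critical case, $\int\newaminus(s)e^{\sigmaplus s}\,ds<\infty$ — this gives $\sigma(Q)\ge 2\la_c$, so $Q$ is analytic near $\la_c$ with $Q(\la_c)\in(0,\infty)$ ($>0$ because $\psi_c>0$ by Theorem~\ref{thm:trwexists} and \eqref{eq:nondegencomp}). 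Hence the singularity of $P$ at $\la_c$ is exactly that of $1/G$.

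\textbf{Order of the zero, and the Tauberian step.} Since $\L\newaplus$ is strictly convex and $G(0)=\kap-m>0$ by \eqref{as:chiplus_gr_m}, the zero $\la_c$ of $G$ is either simple ($G'(\la_c)<0$) or double ($G'(\la_c)=0<G''(\la_c)$): this is precisely the $j=1$ versus $j=2$ alternative in the statement. From \eqref{minspeed}, \eqref{speedviaabs}, \eqref{eq:argminofGisabs}: for $c>c_*$, $\la_c<\la_*\le\sigmaplus$ is the smaller of two zeros of $G$, so $j=1$; for $c=c_*$, $\la_c=\la_*$ is an interior minimiser of $\la\mapsto\la^{-1}(\kap\L\newaplus(\la)-m)$ — equivalently a double zero of $G$, $j=2$ — unless \eqref{eq:critcase} and \eqref{eq:ineqform} hold, in which case $\la_*=\sigmaplus$ and $G'(\sigmaplus^-)$ has the sign of $m-\kap\int(1-\sigmaplus s)\newaplus(s)e^{\sigmaplus s}\,ds$, so $j=1$ under the strict form of \eqref{eq:ineqform} and $j=2$ under the equality \eqref{eq:critcasem} (where \eqref{secmomaddintro} is exactly what makes $G''(\sigmaplus)=\kap\int s^2\newaplus(s)e^{\sigmaplus s}\,ds$ finite); in every non-critical case $\la_c<\sigmaplus$ and $G$ is analytic across $\la_c$. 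Thus $P(z)=j!\,Q(\la_c)\,G^{(j)}(\la_c)^{-1}(\la_c-z)^{-j}+(\text{less singular})$, and, $\psi_c$ being non-increasing, a Tauberian theorem of Ikehara--Delange type for $\L$ gives $\psi_c(s)\sim D\,s^{j-1}e^{-\la_c s}$ as $s\to\infty$ with $D=j!\,Q(\la_c)\,|G^{(j)}(\la_c)|^{-1}>0$, normalised to $1$ by the shift $\psi_c\mapsto\psi_c(\cdot+\la_c^{-1}\ln D)$. The delicate point — and, I expect, the main obstacle — is the critical case $\la_c=\sigmaplus$, where $P$ is a priori analytic only in the open half-strip $\Re z<\sigmaplus$: one must then use the form of the Tauberian theorem that needs only continuity of $G$, $Q$ (and of $G''$ when $j=2$) up to the boundary line $\Re z=\sigmaplus$, and this regularity is precisely what \eqref{eq:critcase} and \eqref{secmomaddintro} provide; in the non-critical case the argument is softer, as $G$ is analytic across $\la_c$ and a contour shift suffices.

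\textbf{Strong comparison principle.} For the uniqueness I would run a sliding argument, whose engine is the following. Let $\psi,\varphi\in\M$ be two profiles of the same speed $c\neq0$ and suppose $w:=\psi(\cdot+h)-\varphi\ge0$ on $\R$ for some $h\in\R$. Subtracting the two instances of \eqref{eq:trw} shows that $w$ solves a linear equation $c\,w'+\kap(\newaplus*w)-\kn\,\psi(\cdot+h)\,(\newaminus*w)=Vw$ with $V$ bounded and $V\ge m>0$. At any zero $s_0$ of $w$ one has $w'(s_0)=0$ ($w\in C^1$ since $c\ne0$), and evaluating the identity at $s_0$, using $\psi(\cdot+h)\le\theta$ and $w\ge0$, gives $0=\kap(\newaplus*w)(s_0)-\kn\psi(s_0+h)(\newaminus*w)(s_0)\ge\int J_\theta(s_0-\tau)\,w(\tau)\,d\tau\ge0$ by \eqref{as:aplus_gr_aminus}; hence $\int J_\theta(s_0-\tau)w(\tau)\,d\tau=0$, and by \eqref{as:aplus-aminus-is-pos} this forces $w\equiv0$ on $[s_0-\delta,s_0+\delta]$, so $w\equiv0$ on $\R$ by connectedness. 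Thus two ordered profiles of the same speed either coincide or are strictly ordered.

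\textbf{The sliding.} By the first part, $\varphi(s)\sim D_\varphi s^{j-1}e^{-\la_c s}$, $\psi(s)\sim D_\psi s^{j-1}e^{-\la_c s}$ with $D_\varphi,D_\psi>0$, so $\psi(s+h)/\varphi(s)\to(D_\psi/D_\varphi)e^{-\la_c h}$ as $s\to\infty$. I claim $\{h:\varphi\le\psi(\cdot+h)\text{ on }\R\}$ is a closed half-line $(-\infty,h^*]$: it is an interval since $\psi,\varphi$ are non-increasing; it is nonempty, since for $h$ very negative the matching asymptotics give $\varphi\le\psi(\cdot+h)$ on some $[N,\infty)$, $\psi(\cdot+h)\to\theta>\varphi$ uniformly on $[-N,N]$, and on $(-\infty,-N]$ a negative interior minimum of $\psi(\cdot+h)-\varphi$ is excluded by a variant of the computation above — near $-\infty$ the coefficient $V\to\kap+\kl\theta$ while $\kap-\kn\psi(\cdot+h)\to\kap-\kn\theta$, so such a minimum would force $\kam\theta\le0$, against \eqref{theta_def}; and it is bounded above since $h>\la_c^{-1}\ln(D_\psi/D_\varphi)$ makes $\psi(\cdot+h)<\varphi$ near $+\infty$. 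At $h=h^*$ we have $\varphi\le\psi(\cdot+h^*)$; if these are not identical, then $\varphi<\psi(\cdot+h^*)$ strictly by the strong comparison principle, and a lower bound of $\psi(\cdot+h^*)-\varphi$ on a compact interval together with the $+\infty$-asymptotics and the near-$-\infty$ argument lets one slide to $h^*+\eta$ for small $\eta>0$, contradicting maximality unless $h^*=\la_c^{-1}\ln(D_\psi/D_\varphi)$. Running the argument with $\psi$ and $\varphi$ interchanged gives the reverse inequality $\psi\le\varphi(\cdot-h^*)$, hence $\varphi=\psi(\cdot+h^*)$, which is the claimed uniqueness up to a shift. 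A secondary technical nuisance throughout the sliding is precisely this behaviour near $-\infty$, where the comparison has to be propagated using stability of the state $\theta$ (quantitatively $\kam\theta>0$).
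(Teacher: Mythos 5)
Your strategy --- take the bilateral Laplace transform of the profile equation, read off the singularity of $\L\psi_c$ at its abscissa, determine its order ($j\in\{1,2\}$) from the zero of the characteristic function, extract the asymptotic via an Ikehara-type Tauberian theorem, and run a sliding/strong-comparison argument for uniqueness --- mirrors the paper's own proof (Propositions~\ref{prop:charfuncpropert}, \ref{asymptexact} and Theorem~\ref{thm:tr_w_uniq}). The factorization $\L\psi_c=Q/\h_{\xi,c}$ with $\sigma(Q)>\sigma(\psi_c)$, the $j=1$/$j=2$ dichotomy via $\h_{\xi,c}'$ and $\h_{\xi,c}''$, and the normalization of $D$ by a shift all match the paper (cf.\ \eqref{rewriting}, \eqref{unexpineq}, Remark~\ref{rem:shifting}). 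Your strong comparison step, which reduces to $(J_\theta*w)(s_0)=0$ and then invokes \eqref{as:aplus-aminus-is-pos}, is the same mechanism the paper uses (via \cite[Proposition 5.2]{FT2017a}), and your computation of the limiting coefficients near $-\infty$ correctly identifies the role of $\kam\theta>0$, which the paper makes quantitative through the $L_\theta$/$J_\theta$ rearrangement in \eqref{neweqnpos}--\eqref{ufffffffff}.

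There is, however, a genuine gap in the Tauberian step, and it sits precisely in the case the theorem was designed to cover. You write that in the critical case $\sigma(\psi_{c_*})=\sigmaplus$ one may appeal to a Tauberian theorem ``that needs only continuity of $G$, $Q$ (and of $G''$ when $j=2$) up to the boundary line.'' For $j=1$ this is not enough. The relevant theorem (the paper's Proposition~\ref{prop:tauber}) requires condition \eqref{loglim} with weight $q_1(\sigma)=\log\sigma$, i.e.\ the oscillation of $F$ along vertical segments approaching $\Re z=\mu$ must be $o\bigl(1/\lvert\log\sigma\rvert\bigr)$, a Dini-type modulus that mere continuity does not provide. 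The problematic subcase is $c=c_*$, $\newaplus\in\Wxi$, $m<\T_\xi(\sigmaplus)$: there $j=1$, the singularity $\sigma(\psi_{c_*})=\sigmaplus$ lies on the boundary of analyticity of $\L\newaplus$, so $F$ is continuous but \emph{not} analytic across $\Re z=\sigmaplus$, and your Lipschitz/mean-value shortcut (which you correctly use when $F\in\An(K_{\beta,\mu,T})$, i.e.\ for $c>c_*$ or $\newaplus\in\Vxi$) is unavailable. The paper instead proves the required oscillation bound directly in \eqref{fgest}--\eqref{forcont}, splitting the $s$-integral at $0$ and using the moment structure of $\newaplus(s)e^{\sigmaplus s}$ together with the elementary inequality $\lvert e^{-\sigma t}-e^{-2\sigma t}\rvert\leq\sigma\lvert t\rvert$. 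Without an estimate of this kind your argument does not reach the asymptotic \eqref{eq:trw_asymptintro} in exactly the critical $j=1$ regime, which the introduction flags as the previously open case.
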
 

Clearly, \eqref{as:aplus-aminus-is-pos} implies that \eqref{as:a+nodeg-mod} holds with $r=0$.

Therefore, in the non-critical case, the profile of a traveling wave with a non-minimal speed decays exponentially at infinity with the rate equal to the abscissa of the profile, whereas for the minimal speed it decays slower: with an additional linear factor. However, in the critical case, the profile of the traveling wave with the minimal speed will not have that additional factor, unless both \eqref{eq:critcase} and \eqref{eq:critcasem} hold (and we can prove the latter under the additional assumption \eqref{secmomaddintro} only).

To demonstrate the critical case, consider the kernel 
\begin{equation}\label{exofaintro}
\newaplus (s):=\frac{\alpha e^{-\mu|s|}}{1+|s|^q}, \qquad s\in\R, \ q\geq0, \ \mu>0,
\end{equation}
where $\alpha>0$ is a normalizing constant. Then $\sigmaplus=\sigma(\newaplus )=\mu$.  
In Example~\ref{ex:spefunc} below, we will show that, for $q>2$, there exist $\mu_*>0$ and $m_*\in(0,\kap )$, such that
 $ \sigma(\psi_{c_*(\xi)})=\sigmaplus$, if only $\mu\in(0,\mu_*]$ and $m\in(0,m_*]$. 
The condition \eqref{secmomaddintro} does not take place only for $q\in(2,3]$, $\mu\in(0,\mu_*]$ and $m=m_*$.  

Another specific of the critical case is visible from the behavior on the positive half-line of the so-called characteristic function $\h_{\xi,c}$, corresponding to the traveling wave with a speed $c\geq c_*$, see \eqref{charfunc} and Proposition~\ref{prop:charfuncpropert} below: 
\[
\h_{\xi,c}(\la):= \kap (\L \newaplus )(\lambda)-m-c \lambda ,
\]
cf. e.g. \cite{Sch1980}. (This function is equal to infinity for $\lambda>\sigmaplus$.) Then the minimal positive root of $\h_{\xi,c}$ is $\sigma(\psi_c)$. The sketches on Figure~\ref{fig:sketches} reflect the difference between the critical and non-critical cases for the function $\h_{\xi,c}$.

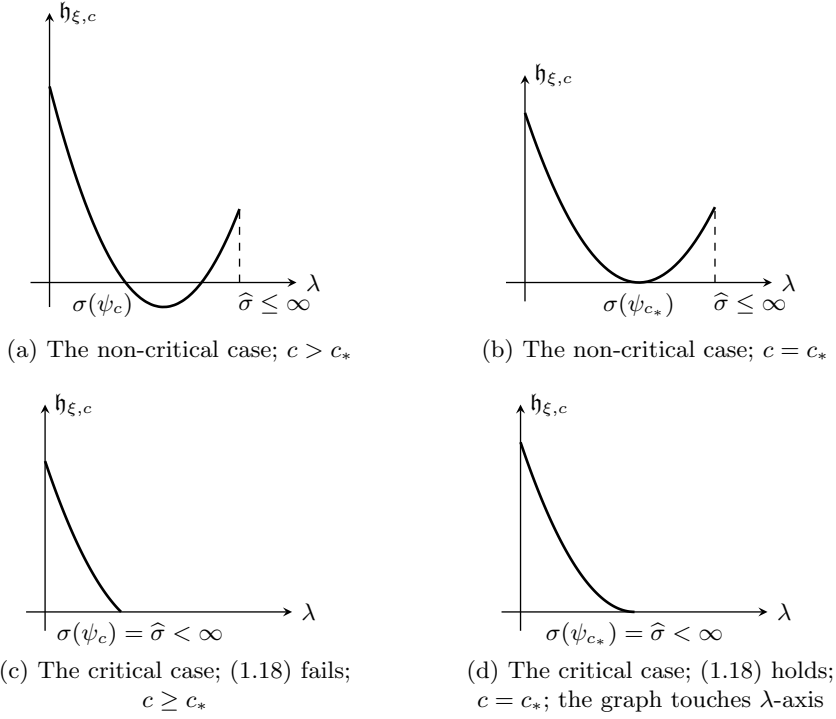
\begin{figure}[!ht]
\vspace{-0.75\baselineskip}
\centering
\begin{subfigure}[t]{.4\linewidth}
  \centering 
  \begin{tikzpicture}[xscale=0.5,yscale=0.65,line width=1pt,every node/.style={font=\small}, 
  declare function={u(\x)=0.5*\x*\x-3*\x+4;},>=stealth]
  \draw[line width=0.5pt,->] (-0.5,0)--(6.5,0) node[right] {$\lambda$};
  \draw[line width=0.5pt,->] (0,-0.5)--(0,5.5) node[right] {$\h_{\xi,c}$};
  \draw[domain=0:5,smooth,variable=\x,line width=1pt] plot (\x,{u(\x)});
  \node[below right] at (4.7,0) {$\sigmaplus\leq \infty$}; 
  \draw[dashed,line width=0.5pt] (5,0)--(5,{u(5)});
  \node[below] at (1.4,0) {$\sigma(\psi_c)$};
      \end{tikzpicture}
      \caption{\centering \small The non-critical case; $c>c_*$}
\end{subfigure}\qquad\qquad
\begin{subfigure}[t]{.4\linewidth}
\centering  \begin{tikzpicture}[xscale=0.5,yscale=0.5,line width=1pt,every node/.style={font=\small}, 
  declare function={u(\x)=0.5*\x*\x-3*\x+4.5;},>=stealth]
  \draw[line width=0.5pt,->] (-0.5,0)--(6.5,0) node[right] {$\lambda$};
  \draw[line width=0.5pt,->] (0,-0.5)--(0,5.5) node[right] {$\h_{\xi,c}$};
  \draw[domain=0:5,smooth,variable=\x,line width=1pt] plot (\x,{u(\x)});
  \node[below right] at (4.7,0) {$\sigmaplus\leq \infty$}; 
  \draw[dashed,line width=0.5pt] (5,0)--(5,{u(5)});
  \node[below] at (3,0) {$\sigma(\psi_{c_*})$};
      \end{tikzpicture}
      \caption{\centering \small The non-critical case; $c=c_*$}
\end{subfigure}


\begin{subfigure}[t]{.4\linewidth}
  \centering 
  \begin{tikzpicture}[xscale=0.5,yscale=0.5,line width=1pt,every node/.style={font=\small}, 
  declare function={u(\x)=0.5*\x*\x-3*\x+4;},>=stealth]
  \draw[line width=0.5pt,->] (-0.5,0)--(6.5,0) node[right] {$\lambda$};
  \draw[line width=0.5pt,->] (0,-0.5)--(0,5.5) node[right] {$\h_{\xi,c}$};
  \draw[domain=0:2,smooth,variable=\x,line width=1pt] plot (\x,{u(\x)});
  \node[below] at (2.5,0) {$\sigma(\psi_c)=\sigmaplus<\infty$};
      \end{tikzpicture}
      \caption{\centering \small The critical case; \eqref{eq:critcasem} fails; $c\geq c_*$}
\end{subfigure}\qquad\qquad
\begin{subfigure}[t]{.4\linewidth}
\centering  \begin{tikzpicture}[xscale=0.5,yscale=0.5,line width=1pt,every node/.style={font=\small}, 
  declare function={u(\x)=0.5*\x*\x-3*\x+4.5;},>=stealth]
  \draw[line width=0.5pt,->] (-0.5,0)--(6.5,0) node[right] {$\lambda$};
  \draw[line width=0.5pt,->] (0,-0.5)--(0,5.5) node[right] {$\h_{\xi,c}$};
  \draw[domain=0:3,smooth,variable=\x,line width=1pt] plot (\x,{u(\x)});
  \node[below] at (3,0) {$\sigma(\psi_{c_*})=\sigmaplus<\infty$};
      \end{tikzpicture}
      \caption{\centering \small The critical case;  \eqref{eq:critcasem} holds; $c=c_*$; the graph touches $\la$-axis}
\end{subfigure}
\vspace{-0.75\baselineskip}
\caption{\small Sketches of the characteristic function $\h_{\xi,c}$ for the critical (where \eqref{eq:critcase}--\eqref{eq:ineqform} hold) and the non-critical cases}\label{fig:sketches}
\end{figure}

In the case of the local nonlinearity in \eqref{eq:basic}, when $\kn=0$, the results of Theorems~\ref{thm:trwcandpsi}--\ref{thm:asympandunic} were mainly known in the literature under additional assumptions. For example, in~\cite{SZ2010}, the kernel $\aplus$  was synmmetric and compactly supported; in~\cite{CDM2008}, the kernel $\aplus$ was anisotropic, but $\newaplus $ was supposed to be compactly supported; whereas the conditions in \cite{ZLW2012} corresponded to a symmetric $\newaplus $, such that the inequality in \eqref{aplusexpint1} holds \emph{for all} $\mu>0$. 
In these both cases, $\sigmaplus =\sigma(\newaplus ) =\infty$; and hence, recall, $ \sigma(\psi_{c_*(\xi)})<\sigmaplus $. In~\cite{AGT2012}, an anisotropic kernel which satisfies \eqref{aplusexpint1} was considered (that allows $\sigmaplus <\infty$ as well), however, it was assumed that $ \sigma(\psi_{c_*(\xi)})<\sigmaplus $. The critical case $ \sigma(\psi_{c_*(\xi)})=\sigmaplus $, therefore, remained an open problem.

For a nonlocal nonlinearity in \eqref{eq:basic}, i.e. when $\kn\neq0$, the only known results \cite{YY2013} also concerned the more simple case $\sigma(\newaplus )=\infty$. 

The paper is organized as follows: in Section~\ref{sec:candpsi} we prove Theorem~\ref{thm:trwcandpsi} for both critical and non-critical cases, and in Section~\ref{sec:asympt_uniq} we discuss properties of the function $\h_{\xi,c}$ and prove Theorem~\ref{thm:asympandunic}.

\section{Speed and profile of a traveling wave}\label{sec:candpsi}

\subsection{Properties of the bilateral-type Laplace transform}

For an $f\in L^\infty(\R)$,
let $\L f$ be the bilateral-type Laplace transform of $f$ given by \eqref{Laplace},
cf.~\cite[Chapter~VI]{Wid1941}. We collect several results about $\L$ in the following lemma.
\begin{lemma}\label{lem:allaboutLapl}
Let $f\in L^\infty(\R)$.
\begin{enumerate}[label=\textnormal{(L\arabic*)}]
\item\label{L-converges} There exists $\sigma(f)\in [0,\infty]$ such that the integral \eqref{Laplace} converges in the strip $\{0<\Re z<\sigma(f)\}$ (provided that $\sigma(f)>0$) and diverges in the half plane $\{\Re z> \sigma(f)\}$ (provided that $\sigma(f)<\infty$).
\item\label{L-analytic} Let $\sigma(f)>0$. Then $(\L f) (z)$ is analytic in $\{0<\Re z<\sigma(f)\}$, and, for any $n\in\N$,
    \begin{equation*}
      \dfrac{d^n}{dz^n}(\L f)(z)=\int_\R e^{zs} s^n f(s)\,ds, \quad 0<\Re z<\sigma(f).
    \end{equation*}
\item\label{L-singular} Let $f\geq0$ a.e.\!\! and $0<\sigma(f)<\infty$. Then $(\L f)(z)$ has a singularity at $z=\sigma(f)$. In particular, $\L f$ has not an analytic extension to a strip  $0<\Re z<\nu$, with $\nu>\sigma(f)$.
\item\label{L-derivative} Let $f':=\frac{d}{ds}f\in L^\infty(\R)$, $f(\infty)=0$, and $\sigma(f')>0$. Then $\sigma(f)\geq\sigma(f')$ and, for any $0<\Re z<\sigma(f')$,
\begin{equation}\label{LaplaceofDer}
(\L f')(z)=-z (\L f)(z).
\end{equation}
\item\label{L-convolution} Let $g\in L^\infty(\R)\cap L^1(\R)$ and $\sigma(f)>0$, $\sigma(g)>0$. Then $\sigma(f*g)\geq\min\{\sigma(f),\sigma(g)\}$ and, for any $0<\Re z<\min\{\sigma(f),\sigma(g)\}$,
\begin{equation}\label{LaplaceofConv}
	\bigl(\L(f*g)\bigr)(z)=(\L f)(z) (\L g)(z).
\end{equation}
\item\label{L-onesidelimit}
Let $0\leq f\in L^1(\R)\cap L^\infty(\R)$ and $\sigma(f)>0$. Then 
\[
  \lim\limits_{\la\to 0+} (\L f)(\la)=\int_\R f(s)\,ds.
\]
\item\label{L-onesidelimit2}
Let $f\geq0$, $\sigma(f)\in(0,\infty)$ and $A:=\int_\R f(s) e^{\sigma(f)s}\,ds<\infty$. Then 
\[
  \lim\limits_{\la\to \sigma(f)-} (\L f)(\la)=A.
\]
\item\label{L-decaying} Let $f\geq0$ be decreasing on $\R$, and let $\sigma(f)>0$. Then, for any $0<\la<\sigma(f)$,
\begin{equation}\label{expdecay}
f(s)\leq \frac{\la e^\la}{e^\la-1} (\L f)(\la) e^{-\la s}, \quad s\in\R.
\end{equation}
Moreover, 
\begin{equation}\label{absofsq2}
	\sigma\bigl(f^2\bigr)\geq 2\sigma(f), 
\end{equation}
and for any $0\leq g\in L^\infty(\R)\cap L^1(\R)$, $\sigma(g)>0$,
\begin{equation}\label{absofsq}
  \sigma\bigl(f(g *f)\bigr)\geq\sigma(f)+ \min\bigl\{\sigma(g),\sigma(f)\bigr\}.
\end{equation}
\end{enumerate}
\end{lemma}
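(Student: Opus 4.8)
The plan is to verify the items in the order stated, since the later ones rest on the earlier: the pointwise bound \eqref{expdecay} in \ref{L-decaying} is the engine behind \eqref{absofsq2}--\eqref{absofsq}, and \ref{L-singular} underlies the finiteness arguments used later in the paper. The structural observation that recurs everywhere is that, $f$ being bounded, $\int_{-\infty}^0\lvert f(s)\rvert e^{\lambda s}\,ds\le\lVert f\rVert_\infty/\lambda<\infty$ for every $\lambda>0$, so convergence in \eqref{Laplace} is decided solely by the tail on $[0,\infty)$, and since $\lvert f(s)e^{zs}\rvert=\lvert f(s)\rvert e^{(\Re z)s}$ it depends on $z$ only through $\Re z$. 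For \ref{L-converges} I would accordingly set $\sigma(f):=\sup\{\lambda>0:\int_0^\infty\lvert f(s)\rvert e^{\lambda s}\,ds<\infty\}$ (with $\sup\varnothing:=0$); monotonicity of $\lambda\mapsto e^{\lambda s}$ for $s\ge0$ makes the set inside the supremum an interval with left endpoint $0$, which is the asserted strip/half-plane dichotomy. For \ref{L-analytic}, on a closed substrip $\lambda_1\le\Re z\le\lambda_2$ with $0<\lambda_1\le\lambda_2<\sigma(f)$ I would dominate $\lvert s^nf(s)e^{zs}\rvert$ by $\lvert s\rvert^n\lvert f(s)\rvert e^{\lambda_1 s}$ on $(-\infty,0]$ and by $C_n\lvert f(s)\rvert e^{\lambda_3 s}$ on $[0,\infty)$ for a fixed $\lambda_3\in(\lambda_2,\sigma(f))$; this local-uniform integrable bound licenses differentiation under the integral to all orders, and, with Morera's theorem and Fubini, analyticity.

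The substantive point is \ref{L-singular}, a Pringsheim--Landau-type statement. I would split $f=f_++f_-$ into the restrictions of $f$ to $[0,\infty)$ and $(-\infty,0)$; since $\L f_-$ converges, hence is analytic, on all of $\{\Re z>0\}$, while $\sigma(f_+)=\sigma(f)$, it suffices to exclude an analytic continuation of $\L f_+$ past $\sigma(f)\in(0,\infty)$. If one existed, $\L f_+$ would be analytic on a disc $B(\lambda_0,R)$ with $0<\lambda_0<\sigma(f)<\lambda_0+R$, so its Taylor series about $\lambda_0$ would converge at a real $\lambda_1\in(\sigma(f),\lambda_0+R)$. But the coefficients $(\L f_+)^{(n)}(\lambda_0)=\int_0^\infty s^nf(s)e^{\lambda_0 s}\,ds$ are all nonnegative --- this is exactly where one-sided support together with $f\ge0$ is used --- so Tonelli lets the series be summed inside the integral, producing $\int_0^\infty f(s)e^{\lambda_1 s}\,ds<\infty$ and contradicting \ref{L-converges} because $\lambda_1>\sigma(f)$.

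The remaining items are applications of Fubini/Tonelli and monotone or dominated convergence. For \ref{L-derivative}, writing $f(s)=-\int_s^\infty f'(t)\,dt$ (valid since $f(\infty)=0$) gives $\lvert f(s)\rvert e^{\lambda s}\le\int_s^\infty\lvert f'(t)\rvert e^{\lambda t}\,dt$ for $s\ge0$, whence by Tonelli $\int_0^\infty\lvert f\rvert e^{\lambda s}\,ds\le\lambda^{-1}\int_0^\infty\lvert f'\rvert e^{\lambda s}\,ds<\infty$ for $\lambda<\sigma(f')$, so $\sigma(f)\ge\sigma(f')$; the same bound makes the boundary terms in the integration by parts on $[-N,N]$ vanish as $N\to\infty$ (at $-\infty$ because $f$ is bounded), which yields \eqref{LaplaceofDer}. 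For \ref{L-convolution}, a Tonelli computation gives $\int_\R\lvert(f*g)(s)\rvert e^{\lambda s}\,ds\le(\L\lvert f\rvert)(\lambda)\,(\L\lvert g\rvert)(\lambda)<\infty$ for $\lambda<\min\{\sigma(f),\sigma(g)\}$ (with $g\in L^1(\R)$ taming the $(-\infty,0)$ portion of $\L\lvert g\rvert$), and Fubini then gives \eqref{LaplaceofConv}. Items \ref{L-onesidelimit} and \ref{L-onesidelimit2} follow by dominated convergence with fixed integrable majorants: $f(s)\1_{s<0}+f(s)e^{\lambda_0 s}\1_{s\ge0}$ for a fixed $\lambda_0\in(0,\sigma(f))$ in the first case (using $f\in L^1(\R)$), and $f(s)e^{\lambda_0 s}\1_{s<0}+f(s)e^{\sigma(f)s}\1_{s\ge0}$ in the second (first summand integrable since $f$ is bounded, second since $A<\infty$).

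Finally \ref{L-decaying}: decreasingness of $f$ gives, for $0<\lambda<\sigma(f)$, $(\L f)(\lambda)\ge\int_{s-1}^{s}f(t)e^{\lambda t}\,dt\ge f(s)e^{\lambda s}\frac{e^\lambda-1}{\lambda e^\lambda}$, which is \eqref{expdecay}. For \eqref{absofsq2} and \eqref{absofsq} I would split the integral at $s=0$: the part over $(-\infty,0)$ is at most a constant times $e^{\mu s}$, integrable when $\mu>0$; over $[0,\infty)$ one factor of $f$ is bounded through \eqref{expdecay} by $C_{\lambda_1}e^{-\lambda_1 s}$ with $\lambda_1<\sigma(f)$, and the other factor (a further $f$, or $g*f$ bounded by $C'_{\lambda_2}e^{-\lambda_2 s}$ via a convolution estimate combined once more with \eqref{expdecay}) by a constant times $e^{-\lambda_2 s}$ with $\lambda_2<\sigma(f)$ (resp.\ $\lambda_2<\min\{\sigma(g),\sigma(f)\}$), leaving an integrand at most a constant times $e^{(\mu-\lambda_1-\lambda_2)s}$; one may choose $\lambda_1+\lambda_2>\mu$ exactly when $\mu$ lies below $2\sigma(f)$ (resp.\ $\sigma(f)+\min\{\sigma(g),\sigma(f)\}$). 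I expect \ref{L-singular} to be the only genuinely delicate step; the rest is convergence-theorem bookkeeping whose recurring device is to strip off the harmless $(-\infty,0)$ tail, where boundedness of $f$ suffices, before estimating on $[0,\infty)$.
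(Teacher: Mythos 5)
Your proof is correct and follows the same structural skeleton as the paper's: everything reduces to the observation that the $(-\infty,0]$ tail is tamed by $f\in L^\infty(\R)$ (indeed $\int_{-\infty}^0\lvert f(s)\rvert e^{\lambda s}\,ds\le\lVert f\rVert_\infty/\lambda$), so the abscissa is governed by the behavior on $[0,\infty)$. The paper makes this reduction explicit by writing $\L=\L^++\L^-$ with $(\L^+f)(z)=(\mathcal Lf)(-z)$ the classical unilateral Laplace transform, and then outsources items \ref{L-converges}--\ref{L-convolution} almost entirely to Widder's book (Theorems II.1, II.5a, II.5b, II.2.3a-b, VI.16a); your proposal re-derives these directly. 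In particular, your Pringsheim--Landau argument for \ref{L-singular} (nonnegative Taylor coefficients at a real $\lambda_0$, Tonelli to push the series through the integral, contradiction with convergence beyond $\sigma(f)$) is exactly the content of Widder's Theorem II.5b. For \ref{L-onesidelimit}--\ref{L-onesidelimit2} the paper distinguishes two cases and in one of them cites Widder's Abelian Theorem V.1, while your dominated-convergence argument with the explicit split majorant is arguably cleaner and covers both cases at once. The treatment of \ref{L-decaying} is essentially identical in both: the same telescoping bound on $[s-1,s]$ gives \eqref{expdecay}, after which both you and the paper apply it to one factor and absorb the remaining exponential; you split the integral at $0$, while the paper keeps it as a single $\L$-integral, but the estimates are equivalent (note that for \eqref{absofsq} one must observe that $g*f$ is again nonnegative and decreasing in order to apply \eqref{expdecay} to it, which the paper states explicitly and which you gesture at with ``combined once more with \eqref{expdecay}''). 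One small caveat on \ref{L-converges}: the quantity you define, $\sup\{\lambda>0:\int_0^\infty\lvert f\rvert e^{\lambda s}\,ds<\infty\}$, is the abscissa of \emph{absolute} convergence, while the lemma's dichotomy (converges below, diverges above) is in general a statement about the abscissa of conditional convergence, which can be strictly larger for sign-changing $f$; these coincide for $f\ge0$, which is the only case used downstream, so the discrepancy is harmless here but worth noting if you intend the lemma to hold verbatim for general $f\in L^\infty(\R)$.
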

\begin{proof}
We can rewrite $\L=\L^++\L^-$, where
\begin{equation*}
(\L^\pm f)(z)=\int_{\R_\pm} f(s)e^{z s}\,ds, \quad \Re z>0,
\end{equation*}
$\R_+=[0,\infty)$, $\R_-=(\infty,0]$.  Let $\mathcal{L}$ denote the classical (unilateral) Laplace transform:
\[
(\mathcal{L}f)(z)=\int_{\R_+}f(s)e^{-z s}\,ds,
\]
and $\mathfrak{s}(f)$ be its abscissa of convergence (see details, e.g. in \cite[Chapter~II]{Wid1941}).
Then, clearly,
$(\L^+ f)(z)=(\mathcal{L}f)(-z)$, $(\L ^- f)(z)=(\mathcal{L}f^-)(z)$, where $f^-(s)=f(-s)$, $s\in\R$. As a result, $\sigma(f)=-\mathfrak{s}(f)$.

It is easily seen that, for $f\in L^\infty(\R)$, $\mathfrak{s}(f^-)\leq0$, in particular,the function $(\L^- f)(z)$ is analytic on $\Re z>0$.

Therefore, the properties \ref{L-converges}--\ref{L-singular} are direct consequences of \cite[Theorems II.1, II.5a, II.5b]{Wid1941}, respectively. The property \ref{L-derivative} may be easily derived from \cite[Theorem II.2.3a, II.2.3b]{Wid1941}, taking into account that $f(\infty)=0$. The property~\ref{L-convolution} one gets by a straightforward computation, cf. \cite[Theorem VI.16a]{Wid1941}; note that $f*g\in L^\infty(\R)$.

Next, $\sigma(f)>0$ implies $\mathfrak{s}(f)<0$, therefore, $\L^+f$ can be analytically continued to $0$. If $\mathfrak{s}(f^-)<0$, then $\L^- f$ can be analytically continued to $0$ as well, and \ref{L-onesidelimit} will be evident. Otherwise, if $\mathfrak{s}(f^-)=0$ then \ref{L-onesidelimit} follows from \cite[Theorem~V.1]{Wid1941}. Similar arguments prove \ref{L-onesidelimit2}.

To prove \ref{L-decaying} for a decreasing nonnegative $f$, note that, for any $0<\la<\sigma(f)$,
\[
f(s) \int_{s-1}^{s}e^{\la \tau}\,d\tau\leq
\int_{s-1}^s f(\tau) e^{\la \tau}\,d\tau\leq (\L f)(\la), \quad s\in\R,
\]
that implies \eqref{expdecay}. Next, by \ref{L-convolution},
$\sigma(g*f)>0$, and conditions on $g$ yield that $g*f\geq0$ is decreasing as well. Therefore, by \eqref{expdecay}, for any $0<\la<\sigma(g *f)$,
\begin{align*}
  \bigl\lvert\bigl(\L(f(g *f))\bigr)(z)\bigr\rvert&\leq \int_\R f(s)(g *f)(s)
  e^{s\Re z}\,ds\\
  &\leq \frac{\la e^\la}{e^\la-1}\bigl(\L (g *f)\bigr)(\la)\int_\R f(s)e^{-s\la}
  e^{s\Re z}\,ds<\infty,
\end{align*}
provided that $\Re z<\sigma(f)+\la<\sigma(f)+\sigma(g*f)$. As a result,
$\sigma\bigl( f(g*f)\bigr)\geq\sigma(f)+\sigma(g*f)$ that, by \ref{L-convolution}, implies \eqref{absofsq}. Similarly one can prove \eqref{absofsq2}.
\end{proof}

\subsection{Proof of Theorem~\ref{thm:trwcandpsi}}

Through the rest of the paper we will always assume that \eqref{as:chiplus_gr_m} holds.
Note also, that \eqref{as:aplus_gr_aminus} and \eqref{boundedkernels} imply $\newaminus\in L^\infty(\R)$.


\begin{remark} \label{shiftoftrw}
By \cite[Remark 3.6]{FKT100-1},  if $\psi\in\M$, $c\in\R$ gets \eqref{eq:deftrw} then, for any $s\in\R$, $\psi(\cdot+s)$ is a traveling wave to \eqref{eq:basic} with the same $c$.
\end{remark}

Fix any $\xi\in S^{d-1}  $. For $\mu>0$, we denote, cf. \eqref{apm1dim}, 
\begin{equation}\label{eq:dedAxi}
      \A_\xi(\mu):=\int_\X \aplus (x) e^{\mu x\cdot \xi}\,dx= \int_\R \newaplus (s) e^{\mu s}\,ds\in(0,\infty].
\end{equation}
Under \eqref{as:aplus_gr_aminus}, \eqref{aplusexpint1} and \eqref{boundedkernels}, $\sigma( \newaplusminus )>0$ and
\[
  \A_\xi(\mu)=(\L \newaplus )(\mu)<\infty, \quad 0<\mu<\sigma( \newaplus ).
\]

Consider, the following complex-valued function, cf. \eqref{aplusexpint1},
\begin{equation}\label{dedGxi}
G_\xi(z):=\frac{\kap (\L\newaplus )(z)-m}{z}, \quad \Re z>0,
\end{equation}
which is well-defined on $0<\Re z<\sigma(\newaplus )$. We have proved in \cite[formula (3.18)]{FKT100-1} that
\begin{equation}\label{ineqwillbeeq}
c_*(\xi)\leq \inf\limits_{\la>0} G_\xi(\la),
\end{equation}
where $c_*(\xi)$ is the minimal speed of traveling waves, cf.~Theorem~\ref{thm:trwexists}. We will show below that in fact there exists equality in \eqref{ineqwillbeeq}.

We start with the following notations to simplify the further statements.
\begin{definition}\label{def:Uxi}
  Let $m>0$, $\kap>0$, $\kl,\kn\geq0$, $0\leq \newaminus \in L^1(\R)$  be fixed, and \eqref{as:chiplus_gr_m} and \eqref{eq:nondegencomp} hold. For an arbitrary $\xi\in S^{d-1}  $, denote by $\Uxi$ the subset of functions $0\leq \newaplus \in L^1(\R)$ such that \eqref{as:aplus_gr_aminus}--\eqref{firstmoment} hold.

For $\newaplus\in\Uxi$, denote also the interval $I_\xi \subset (0,\infty)$ by
\begin{equation}\label{eq:maxint}
    I_\xi :=\begin{cases}
  (0,\infty), &\text{if}  \ \
  \sigma(\newaplus )=\infty,\\[2mm]
  \bigl(0,\sigma(\newaplus )\bigr), & \text{if} \ \sigma(\newaplus )<\infty \ \ \text{and} \ \ (\L \newaplus )\bigl(\sigma(\newaplus )\bigr)=\infty \\[2mm]
  \bigl(0,\sigma(\newaplus )\bigr], & \text{if} \  \sigma(\newaplus )<\infty \ \ \text{and} \ \   (\L \newaplus )\bigl(\sigma(\newaplus )\bigr)<\infty.
\end{cases}
\end{equation}
\end{definition}

\begin{proposition}\label{prop:infGisreached}
Let $\xi\in S^{d-1}  $ be fixed and $\newaplus\in\Uxi$.
Then there exists a unique $\la_*=\la_*(\xi)\in I_\xi $ such that
\begin{equation}\label{arginf}
  \inf\limits_{\la>0} G_\xi(\la)=\min_{\la\in I_\xi }G_\xi(\la)=G_\xi(\la_*)>\kap \m_\xi.
\end{equation}
Moreover, $G_\xi$ is strictly decreasing on $(0,\la_*]$ and $G_\xi$ is strictly increasing on $I_\xi\setminus(0,\la_*]$ (the latter interval may be empty).
\end{proposition}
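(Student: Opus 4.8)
The plan is to analyse the real function $G_\xi$ on $I_\xi$ through the auxiliary function $\phi(\la):=\la\,G_\xi(\la)=\kap(\L\newaplus)(\la)-m$. First I would collect the elementary facts. By \ref{L-converges} and the definition \eqref{eq:maxint}, $\phi$ (hence $G_\xi$) is finite on all of $I_\xi$, while $(\L\newaplus)(\la)=+\infty$, hence $G_\xi(\la)=+\infty$, for $\la\in(0,\infty)\setminus I_\xi$; therefore $\inf_{\la>0}G_\xi=\inf_{\la\in I_\xi}G_\xi$. By \ref{L-analytic}, $\phi\in C^\infty$ on the interior of $I_\xi$, with $\phi'(\la)=\kap\int_\R s\,\newaplus(s)e^{\la s}\,ds$ and $\phi''(\la)=\kap\int_\R s^2\,\newaplus(s)e^{\la s}\,ds>0$, the last inequality being strict because $\newaplus$ is a probability density (so $s^2\newaplus(s)$ is not a.e.\ zero). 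By \ref{L-onesidelimit}, $\phi(0+)=\kap-m>0$, so $G_\xi(\la)\to+\infty$ as $\la\to 0+$. Finally, in the third case of \eqref{eq:maxint} (when $\sigmaplus=\sigma(\newaplus)\in I_\xi$), $G_\xi$ is continuous at $\sigmaplus$ by \ref{L-onesidelimit2}.

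Second, I would prove the bound $G_\xi(\la)>\kap\m_\xi$ for all $\la\in I_\xi$. Jensen's inequality applied to the probability density $\newaplus$ and the convex function $t\mapsto e^{\la t}$ (meaningful since $\m_\xi$ is finite by \eqref{firstmoment}) gives $(\L\newaplus)(\la)\ge e^{\la\m_\xi}\ge1+\la\m_\xi$, hence $G_\xi(\la)\ge\frac{\kap-m}{\la}+\kap\m_\xi>\kap\m_\xi$ by \eqref{as:chiplus_gr_m}. This yields the strict inequality in \eqref{arginf} once $\la_*$ is identified.

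Third comes the unimodality argument. Put $\eta(\la):=\la^2G_\xi'(\la)=\la\phi'(\la)-\phi(\la)$ on the interior of $I_\xi$; then $\eta'(\la)=\la\phi''(\la)>0$, so $\eta$ is strictly increasing and has at most one zero. Since $G_\xi(\la)\to+\infty$ as $\la\to 0+$, $G_\xi$ is not nondecreasing near $0$, so $\eta<0$ on a maximal interval $(0,\la_*)\subset I_\xi$ with $\la_*>0$; by strict monotonicity of $\eta$, either $\la_*$ lies in the interior of $I_\xi$ and is the unique zero of $\eta$ there, with $\eta>0$ to its right, or $\eta<0$ on the whole interior of $I_\xi$ and I set $\la_*:=\sigmaplus$. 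The second alternative can occur only in the third case of \eqref{eq:maxint}, because in the other two cases $G_\xi(\la)\to+\infty$ also at the right end of $I_\xi$: if $I_\xi=\bigl(0,\sigma(\newaplus)\bigr)$ with $(\L\newaplus)\bigl(\sigma(\newaplus)\bigr)=\infty$, then monotone convergence gives $(\L\newaplus)(\la)\to\infty$ as $\la\uparrow\sigma(\newaplus)$; if $I_\xi=(0,\infty)$, then \eqref{as:a+nodeg-mod} gives $(\L\newaplus)(\la)\ge\frac{\rho}{\la}\bigl(e^{\la(r+\delta)}-e^{\la(r-\delta)}\bigr)\to\infty$. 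In all cases $G_\xi$ is strictly decreasing on $(0,\la_*]$ and strictly increasing on $I_\xi\setminus(0,\la_*]$ (the latter set being empty when $\la_*=\sigmaplus\in I_\xi$), where the behaviour at the endpoints $\la_*$ and, in the third case of \eqref{eq:maxint}, $\sigmaplus$ uses the continuity recorded in the first step. Hence $\min_{\la\in I_\xi}G_\xi(\la)=G_\xi(\la_*)$, attained at the single point $\la_*$, and together with the second step this is \eqref{arginf}.

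The main obstacle is the bookkeeping across the three cases of \eqref{eq:maxint}: specifically, showing that in the first two cases the minimum is genuinely attained at an interior point of $I_\xi$ rather than only approached at the right end — this is exactly where the non-degeneracy assumption \eqref{as:a+nodeg-mod} enters, for $I_\xi=(0,\infty)$ — and correctly handling the one-sided continuity of $G_\xi$ at $\sigmaplus$ in the third case via property \ref{L-onesidelimit2}.
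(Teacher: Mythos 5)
Your proof is correct, and the overall structure (smoothness and strict convexity of $F_\xi(\la)=\la G_\xi(\la)$ via~\ref{L-analytic}, the blow-up $G_\xi(0+)=\infty$ via~\ref{L-onesidelimit}, and the strictly increasing auxiliary function $\eta(\la)=\la^2 G_\xi'(\la)=\la F_\xi'(\la)-F_\xi(\la)$ with at most one zero) matches the paper's argument, where this auxiliary function appears as $H_\xi$ in the proof of the next proposition and the unimodality in the present one is obtained by noting instead that $G_\xi''>0$ at any critical point. The genuinely different ingredient is your treatment of the inequality $G_\xi(\la_*)>\kap\m_\xi$: you apply Jensen's inequality to get $(\L\newaplus)(\la)\geq e^{\la\m_\xi}\geq 1+\la\m_\xi$, and therefore $G_\xi(\la)\geq\frac{\kap-m}{\la}+\kap\m_\xi>\kap\m_\xi$ uniformly in $\la\in I_\xi$, after which identifying $\la_*$ is decoupled from the sign estimate. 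The paper instead proves the inequality only at $\la=\la_*$, splitting into the two cases $G_\xi'(\la_*)=0$ (giving $G_\xi(\la_*)=\kap\A_\xi'(\la_*)>\kap\m_\xi$ by monotonicity of $\A_\xi'$) and $\la_*=\sigmaplus$ (giving $G_\xi(\sigmaplus)\geq\inf F_\xi'>\kap\m_\xi$ via $F_\xi'<G_\xi$). Your Jensen argument is shorter, more transparent, and avoids the case distinction, at the price of being slightly less informative (the paper's derivative-based estimate is what later produces the alternative formula \eqref{minspeed-spec} for $c_*$; but that is not part of this proposition). One small point to tighten: in the case $I_\xi=(0,\infty)$ you write only $(\L\newaplus)(\la)\geq\frac{\rho}{\la}\bigl(e^{\la(r+\delta)}-e^{\la(r-\delta)}\bigr)\to\infty$, whereas what is needed is $G_\xi(\la)\to\infty$; dividing once more by $\la$ still works since the exponential dominates $\la^2$, matching \eqref{toinfasltoinf}, but the extra division should be stated.
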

\begin{proof}
We continue to use the notation $\sigmaplus:=\sigma(\newaplus )\in (0,\infty]$. Denote also
  \begin{equation}\label{defF}
 F_\xi(\la):=\kap  \A_\xi(\la)-m=\la G_\xi(\la), \qquad \la\in I_\xi .
 \end{equation}
By \ref{L-analytic}, for any $\la\in(0,\sigmaplus )$,
\begin{equation}\label{secderA}
  \A_\xi''(\la)=\int_\R s^2 \newaplus (s) e^{\la s}\,ds>0,
\end{equation}
therefore, $\A_\xi'(\la)$ is increasing on $(0,\sigmaplus )$; in particular, by \eqref{firstmoment}, we have, for any $\la\in(0,\sigmaplus )$,
\begin{equation}\label{posderL}
\int_\R s \newaplus (s) e^{\la s}\,ds = \A_\xi'(\la)> \A_\xi'(0)=\int_\R  s \newaplus (s)\,ds=\m_\xi.
\end{equation}
Next, by \ref{L-onesidelimit}, $F_\xi(0+)=\kap -m>0$, hence,
\begin{equation}\label{valat0}
  G_\xi(0+)=\infty.
\end{equation}
Finally, for $\la\in (0,\sigmaplus )$, we have
  \begin{align}\label{Gder1}
    G_\xi'(\la)&=\la^{-2}\bigl(\la F_\xi'(\la)-F_\xi(\la)\bigr)=\la^{-1}\bigl(F_\xi'(\la)-G_\xi(\la)\bigr),\\
    G_\xi''(\la)&=\la^{-1} (F_\xi''(\la)- 2G_\xi'(\la)).\label{Gder2}
  \end{align}

We will distinguish two cases.

\textit{Case 1.\/} There exists $\mu\in(0,\sigmaplus )$ with $G_\xi'(\mu)=0$. Then, by \eqref{Gder2}, \eqref{secderA},
\begin{equation*}
   G_\xi''(\mu)=\mu^{-1} F_\xi''(\mu)=\mu^{-1}\kap  \A_\xi''(\mu)>0.
\end{equation*}
Hence any stationary point of $G_\xi$ is with necessity a point of local minimum, therefore, $G_\xi$ has at most one such a point, thus it will be a global minimum. Moreover, by \eqref{Gder1}, \eqref{posderL}, $G'(\mu)=0$ implies
\begin{equation}\label{gatmin}
    G_\xi(\mu)=F_\xi'(\mu) =\kap  \A_\xi'(\mu)> \kap \m_\xi.
\end{equation}
Therefore, in the Case 1, one can choose $\la_*=\mu$ (which is unique then) to fulfill the statement.

List the conditions under which the Case 1 is possible.
 \begin{enumerate}
 \item Let $\sigmaplus =\infty$. Then, by \eqref{as:a+nodeg-mod},
\begin{equation}\label{toinfasltoinf}
  \frac{1}{\la}\A_\xi(\la)\geq \frac{1}{\la}\int_r^{r+\delta}\newaplus (s)e^{\la s}\,ds\geq\rho\frac{1}{\la^2}\bigl(e^{\la(r+\delta)}-e^{\la r}\bigr)\to\infty,
\end{equation}
as $\la\to\infty$. Then, in such a case, $G_\xi(\infty)=\infty$. Therefore, by \eqref{valat0}, there exists a zero of $G_\xi'$.

\item Let $\sigmaplus <\infty$ and $\A_\xi(\sigmaplus )=\infty$. Then, again, \eqref{valat0} implies the existence of a zero of $G_\xi'$ on $(0,\sigmaplus )$.

\item Let $\sigmaplus <\infty$ and $\A_\xi(\sigmaplus )<\infty$. By \eqref{defF}, \eqref{Gder1},
\begin{equation*}
\lim_{\la\to0+}\la^2 G_\xi'(\la)=-F_\xi(0+)=-(\kap -m)<0.
\end{equation*}
Therefore, the function $G_\xi'$ has a zero on $(0,\sigmaplus )$ if and only if takes a positive value at some point from $(0,\sigmaplus )$.
\end{enumerate}

Now, one can formulate and consider the opposite to the Case 1.

\textit{Case 2.\/} Let $\sigmaplus <\infty$, $\A_\xi(\sigmaplus )<\infty$, and
\begin{equation}\label{Gneg}
  G_\xi'(\la)<0,\quad \la\in(0,\sigmaplus ).
\end{equation}
Therefore,
\begin{equation}\label{infattheend}
\inf\limits_{\la>0} G_\xi(\la)=\inf\limits_{\la\in(0,\sigmaplus ]} G_\xi(\la)=\lim_{\la\to\sigmaplus -} G_\xi(\la)=G_\xi(\sigmaplus ),
\end{equation}
by \ref{L-onesidelimit2}. Hence we have the first equality in \eqref{arginf}, by setting $\la_*:=\sigmaplus $. To~prove the second inequality in \eqref{arginf}, note that, by \eqref{Gder1}, the inequality \eqref{Gneg} is equivalent to $F_\xi'(\la)< G_\xi(\la)$, $\la\in(0,\sigmaplus )$. Therefore, by \eqref{infattheend}, \eqref{defF}, \eqref{posderL},
  \[
  G_\xi(\sigmaplus )=\inf\limits_{\la\in(\frac{\sigmaplus }{2},\sigmaplus )} G_\xi(\la)\geq \inf\limits_{\la\in(\frac{\sigmaplus }{2},\sigmaplus )} F_\xi'(\la)\geq \kap  \A_\xi'\Bigl(\frac{\sigmaplus }{2}\Bigr)> \kap \m_\xi,
  \]
where we used again that, by \eqref{secderA}, $\A_\xi'$ and hence $F_\xi'$ are increasing on $(0,\sigmaplus )$.
The statement is fully proved now.
\end{proof}

The second case in the proof of Proposition~\ref{prop:infGisreached} requires additional analysis. Let $\xi\in S^{d-1}  $ be fixed and $\newaplus\in\Uxi$, $\sigmaplus :=\sigma(\newaplus )$. By \ref{L-analytic}, one can define the following function
\begin{equation}\label{specfunc}
    \T_\xi(\la):=\kap \int_\R (1-\la s)\newaplus (s)e^{\la s}\,ds\in\R, \qquad \la\in[0,\sigmaplus ).
\end{equation}
Note that
\begin{equation}\label{triv}
    \int_{\R_-}|s|\newaplus (s)e^{\sigmaplus  s}\,ds<\infty,
\end{equation}
and
$\int_{\R_+}s\newaplus (s)e^{\sigmaplus  s}\,ds\in(0,\infty]$ is well-defined. Then, in the case $\sigmaplus <\infty$ and $\A_\xi(\sigmaplus )<\infty$, one can continue $\T_\xi$ at $\sigmaplus $, namely,
  \begin{equation}\label{newquant}
    \T_\xi(\sigmaplus ):=\kap \int_\R (1-\sigmaplus  s)\newaplus (s)e^{\sigmaplus  s}\,ds\in[-\infty,\kap ).
  \end{equation}

To prove the latter inclusion, i.e. the strict inequality $\T_\xi(\sigmaplus )<\kap $, consider the function
$f_0(s):=(1-\sigmaplus  s)e^{\sigmaplus  s}$, $s\in\R$. Then, $f'_0(s)=-\sigmaplus^2 se^{\sigmaplus  s}$, and thus $f_0(s)<f_0(0)=1$, $s\neq 0$. Moreover, the function $g_0(s)=f_0(-s)-f_0(s)$, $s\geq0$ is such that $g_0'(s)=\sigmaplus ^2 s(e^{\sigmaplus  s}-e^{-\sigmaplus  s})>0$, $s>0$. As a result, for any $\delta>0$, $f_0(-\delta)>f_0(\delta)$, and
\[
\int_\R f_0(s)\newaplus (s)\,ds\leq f_0(-\delta)\int_{\R\setminus[-\delta,\delta]}\newaplus (s)\,ds
+\int_{[-\delta,\delta]}\newaplus (s)\,ds<\int_\R \newaplus (s)\,ds=1.
\]

\begin{proposition}\label{prop:Gnegholds}
  Let $\xi\in S^{d-1}  $ be fixed and $\newaplus\in\Uxi$. Suppose also that $\sigmaplus :=\sigma(\newaplus )<\infty$ and $\A_\xi(\sigmaplus )<\infty$. Then \eqref{Gneg} holds iff
   \begin{gather}\label{posquant}
    \T_\xi(\sigmaplus )\in(0,\kap ),\\
    \label{smallm}
     m\leq\T_\xi(\sigmaplus ).
   \end{gather}
\end{proposition}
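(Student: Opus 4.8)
The plan is to analyze the sign of $G_\xi'$ on $(0,\sigmaplus)$ via the auxiliary function $F_\xi'(\la) - G_\xi(\la)$, whose sign is exactly the sign of $G_\xi'(\la)$ by \eqref{Gder1}, and to track what happens as $\la \to \sigmaplus-$. Since $\A_\xi(\sigmaplus)<\infty$ and $\sigmaplus<\infty$, property \ref{L-analytic} together with the dominated convergence argument already used for \eqref{newquant} shows that $F_\xi'(\la) = \kap \A_\xi'(\la) = \kap\int_\R s\,\newaplus(s)e^{\la s}\,ds$ extends continuously (possibly to $+\infty$) at $\la = \sigmaplus$, and $G_\xi(\la)$ extends continuously to $G_\xi(\sigmaplus) = F_\xi(\sigmaplus)/\sigmaplus = (\kap\A_\xi(\sigmaplus) - m)/\sigmaplus$ by \ref{L-onesidelimit2}. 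The key algebraic identity is that $F_\xi'(\sigmaplus) - G_\xi(\sigmaplus) = \frac{1}{\sigmaplus}\bigl(\sigmaplus F_\xi'(\sigmaplus) - F_\xi(\sigmaplus)\bigr) = \frac{1}{\sigmaplus}\bigl(\kap\int_\R \sigmaplus s\,\newaplus(s)e^{\sigmaplus s}\,ds - \kap\A_\xi(\sigmaplus) + m\bigr) = \frac{1}{\sigmaplus}\bigl(m - \T_\xi(\sigmaplus)\bigr)$, using the definition \eqref{newquant} of $\T_\xi(\sigmaplus)$. So $G_\xi'(\sigmaplus-)$ has the sign of $m - \T_\xi(\sigmaplus)$ (interpreting things correctly when $\T_\xi(\sigmaplus) = -\infty$, where $F_\xi'(\sigmaplus) = +\infty$ and the difference is $+\infty$).

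For the ``if'' direction, assume \eqref{posquant} and \eqref{smallm}. I would argue by contradiction: suppose \eqref{Gneg} fails, i.e. $G_\xi'(\mu) \ge 0$ for some $\mu \in (0,\sigmaplus)$. Then we are in Case 1 of the proof of Proposition~\ref{prop:infGisreached} — more precisely, since $G_\xi$ is smooth on $(0,\sigmaplus)$ with $G_\xi(0+) = \infty$ by \eqref{valat0}, and $G_\xi'$ takes a nonnegative value, there must be an interior stationary point, and by \eqref{Gder2}, \eqref{secderA} every stationary point of $G_\xi$ is a strict local minimum; hence there is exactly one stationary point $\la_*$, $G_\xi$ is strictly decreasing on $(0,\la_*]$ and strictly increasing on $(\la_*,\sigmaplus)$. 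In particular $G_\xi'(\sigmaplus-) > 0$ (strictly, because $G_\xi'$ is increasing past $\la_*$ — here one uses that $\A_\xi''>0$ forces $G_\xi''>0$ wherever $G_\xi'$ vanishes, but to get strict positivity of the limit I would instead note $G_\xi'$ is continuous and strictly increasing on a right-neighborhood of $\la_*$, hence its limit at $\sigmaplus$ exceeds $G_\xi'(\la_*)=0$, unless it equals $0$ throughout, which is excluded). By the sign computation above, $G_\xi'(\sigmaplus-) > 0$ gives $m > \T_\xi(\sigmaplus)$, contradicting \eqref{smallm}. This proves \eqref{Gneg}. (The hypothesis \eqref{posquant}, specifically $\T_\xi(\sigmaplus) > 0$, is what guarantees $\la_*$ is a genuine interior minimum rather than the endpoint, so that the strict inequality $G_\xi'(\sigmaplus-)>0$ is available; one should double-check whether \eqref{posquant} is actually needed here or only in the converse, and state the argument accordingly.)

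For the ``only if'' direction, assume \eqref{Gneg}. Then, as in Case 2 of Proposition~\ref{prop:infGisreached}, $\inf_{\la>0} G_\xi(\la) = G_\xi(\sigmaplus)$, and the computation there already yields $G_\xi(\sigmaplus) \ge \kap\A_\xi'(\sigmaplus/2) > \kap\m_\xi \ge $ (no, rather) $> 0$ since $\m_\xi$ may be negative — in any case I would instead note directly that $G_\xi'(\la) < 0$ on $(0,\sigmaplus)$ forces, by passing to the limit, $G_\xi'(\sigmaplus-) \le 0$, hence $m \le \T_\xi(\sigmaplus)$, which is \eqref{smallm}. For \eqref{posquant}: the upper bound $\T_\xi(\sigmaplus) < \kap$ is the unconditional inequality established right before the proposition (the $f_0$, $g_0$ computation), valid whenever $\sigmaplus<\infty$ and $\A_\xi(\sigmaplus)<\infty$. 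For the lower bound $\T_\xi(\sigmaplus) > 0$: rewrite $\T_\xi(\sigmaplus) = \kap\A_\xi(\sigmaplus) - \sigmaplus\kap\A_\xi'(\sigmaplus) = \sigmaplus\bigl(G_\xi(\sigmaplus) + m/\sigmaplus\bigr) - \sigmaplus(\kap\A_\xi'(\sigmaplus))$; using $F_\xi'(\la) - G_\xi(\la) < 0$ on $(0,\sigmaplus)$ (the equivalent form of \eqref{Gneg} via \eqref{Gder1}) and letting $\la \to \sigmaplus-$ gives $\kap\A_\xi'(\sigmaplus) \le G_\xi(\sigmaplus)$, hence $\T_\xi(\sigmaplus) = \kap\A_\xi(\sigmaplus) - \sigmaplus \kap\A_\xi'(\sigmaplus) \ge \kap\A_\xi(\sigmaplus) - \sigmaplus G_\xi(\sigmaplus) = \kap\A_\xi(\sigmaplus) - (\kap\A_\xi(\sigmaplus) - m) = m > 0$. (This simultaneously re-derives \eqref{smallm}.) So both \eqref{posquant} and \eqref{smallm} follow.

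The main obstacle I anticipate is handling the boundary behavior at $\sigmaplus$ rigorously: one must justify that $\A_\xi'(\la) \to \int_\R s\,\newaplus(s)e^{\sigmaplus s}\,ds \in (\m_\xi, +\infty]$ as $\la \to \sigmaplus-$ (monotone convergence on $\R_+$, dominated on $\R_-$ by \eqref{triv}), and separately treat the degenerate subcase $\T_\xi(\sigmaplus) = -\infty$ (where $\A_\xi'(\sigmaplus) = +\infty$): there \eqref{smallm} holds vacuously-never and \eqref{posquant} fails, and correspondingly $G_\xi'(\sigmaplus-) = +\infty > 0$ so \eqref{Gneg} must fail — consistent with the ``iff''. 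Likewise the case $G_\xi'(\sigmaplus-) = 0$ exactly (equality in \eqref{smallm}) needs care in the ``if'' direction, since then one cannot immediately rule out $\la_* = \sigmaplus$; but if $\la_*=\sigmaplus$ then $G_\xi' < 0$ on all of $(0,\sigmaplus)$, which is \eqref{Gneg}, so that case is fine. I would also make explicit that $G_\xi$, $F_\xi$ are $C^2$ on $(0,\sigmaplus)$ and continuous up to $\sigmaplus$ under the standing hypotheses, citing \ref{L-analytic}, \ref{L-onesidelimit2}.
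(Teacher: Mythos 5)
Your overall approach coincides with the paper's: both hinge on the identity $\la^2 G_\xi'(\la) = \la F_\xi'(\la) - F_\xi(\la) = m - \T_\xi(\la)$ (your $F_\xi'(\sigmaplus) - G_\xi(\sigmaplus) = (m - \T_\xi(\sigmaplus))/\sigmaplus$ is this divided by $\sigmaplus$ at the endpoint), on passing to the limit as $\la \to \sigmaplus-$, and on the unconditional bound $\T_\xi(\sigmaplus) < \kap$. Your ``only if'' direction is correct, and you rightly abandon the first attempt that routed through $\m_\xi$, which may be negative. As to the question you raise in parentheses: \eqref{posquant} is not needed in the ``if'' direction beyond what \eqref{smallm} already gives, since $m>0$ together with \eqref{smallm} forces $\T_\xi(\sigmaplus) \geq m > 0$, and the upper bound $\T_\xi(\sigmaplus)<\kap$ is unconditional.

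Your ``if'' direction, however, has a real gap at the claim $G_\xi'(\sigmaplus-) > 0$. You argue that $G_\xi'$ is ``continuous and strictly increasing on a right-neighborhood of $\la_*$, hence its limit at $\sigmaplus$ exceeds $G_\xi'(\la_*) = 0$, unless it equals $0$ throughout, which is excluded.'' That dichotomy is false: from $G_\xi' > 0$ on $(\la_*,\sigmaplus)$ nothing prevents $G_\xi'(\sigmaplus-) = 0$, and $G_\xi'$ itself is \emph{not} monotone past $\la_*$ in general, since $G_\xi''(\la) = \la^{-1}\bigl(F_\xi''(\la) - 2G_\xi'(\la)\bigr)$ can turn negative once $G_\xi'$ is positive. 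The observation that closes the gap --- and around which the paper organizes the whole proof --- is that $H_\xi(\la) := \la F_\xi'(\la) - F_\xi(\la) = \la^2 G_\xi'(\la)$ satisfies $H_\xi'(\la) = \la F_\xi''(\la) > 0$, so $H_\xi$ is strictly increasing on $(0,\sigmaplus)$. Since $H_\xi$ is strictly increasing and negative near $0$ (by \eqref{valat0}), one has $H_\xi < 0$ on all of $(0,\sigmaplus)$ if and only if $\lim_{\la\to\sigmaplus-}H_\xi(\la) \leq 0$, i.e.\ $m \leq \T_\xi(\sigmaplus)$; this yields both implications at once, with no contradiction argument. Without this monotonicity the ``if'' direction does not go through as you wrote it.
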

\begin{proof}
  Define the function, cf. \eqref{defF},
\begin{equation}\label{defH}
  H_\xi(\la):=\la F'_\xi(\la)-F_\xi(\la), \quad \la\in(0,\sigmaplus ).
\end{equation}
By \eqref{Gder1}, the condition \eqref{Gneg} holds iff $H_\xi$ is negative on $(0,\sigmaplus )$.
By \eqref{defH}, \eqref{secderA}, one has $H'_\xi(\la)=\la F''_\xi(\la)>0$, $\la\in(0,\sigmaplus )$ and, therefore, $H_\xi$ is (strictly) increasing on $(0,\sigmaplus )$.
By~Proposition~\ref{prop:infGisreached}, $G_\xi'$, and hence $H_\xi$, are negative on a right-neighborhood of $0$. As a result, $H_\xi(\la)<0$ on $(0,\sigmaplus )$ iff
\begin{equation}\label{leftlimHisneg}
\lim\limits_{\la\to\sigmaplus -}H_\xi(\la)\leq0.
\end{equation}
On the other hand, by \eqref{defF}, \eqref{specfunc}, one can rewrite $H_\xi(\la)$ as follows:
\begin{equation}\label{rewrH}
  H_\xi(\la)=-\T_\xi(\la)+m, \quad \la\in(0,\sigmaplus ).
\end{equation}
By the monotone convergence theorem,
\[
\lim\limits_{\la\to\sigmaplus -}\int_{\R_+} (\la s-1)\newaplus (s)e^{\la s}\,ds=\int_{\R_+} (\sigmaplus  s-1)\newaplus (s)e^{\sigmaplus  s}\,ds\in(-1,\infty].
\]
Therefore, by \eqref{triv}, \eqref{rewrH}, $\T_\xi(\sigmaplus )\in\R$ iff $H_\xi(\sigmaplus )=\lim\limits_{\la\to\sigmaplus -}H_\xi(\la)\in\R$. Next, clearly, $H_\xi(\sigmaplus )\in(m-\kap ,0]$ holds true iff both \eqref{smallm} and \eqref{posquant} hold.

As a result, \eqref{Gneg} is equivalent to \eqref{leftlimHisneg} and the latter, by \eqref{triv}, implies that $\T_\xi(\sigmaplus )\in\R$ and hence $H_\xi(\sigmaplus )\in(m-\kap ,0]$.
Vice versa, \eqref{posquant} yields $\T_\xi(\sigmaplus )\in\R$ that together with \eqref{smallm} give that $H_\xi(\sigmaplus )\leq0$, i.e. that \eqref{Gneg} holds.
\end{proof}

According to the above, it is natural to consider two subclasses of functions from $\Uxi$, cf.~Definition~\ref{def:Uxi}.

\begin{definition}\label{def:VWxi}
  Let $\xi\in S^{d-1}  $ be fixed. We denote by $\Vxi$ the class of all kernels $\newaplus\in\Uxi$ such that one of the following assumptions does hold:
  \begin{enumerate}
    \item $\sigmaplus :=\sigma(\newaplus )=\infty$;
    \item $\sigmaplus <\infty$ and $\A_\xi(\sigmaplus )=\infty$;
    \item $\sigmaplus <\infty$, $\A_\xi(\sigmaplus )<\infty$ and
  $\T_\xi(\sigmaplus )\in[-\infty,m)$, where $\T_\xi(\sigmaplus )$ is given by \eqref{newquant}.
  \end{enumerate}
\noindent Correspondingly, we denote by $\Wxi$ the class of all kernels $\newaplus\in\Uxi$ such that
  $\sigmaplus <\infty$, $\A_\xi(\sigmaplus )<\infty$, and $\T_\xi(\sigmaplus )\in[m,\kap )$.
  Clearly, $\Uxi=\Vxi\cup\Wxi$.
\end{definition}

As a result, combining the proofs and statements of Propositions~\ref{prop:infGisreached} and~\ref{prop:Gnegholds}, one immediately gets the following corollary.
\begin{corollary}\label{cor_alltogether}
   Let $\xi\in S^{d-1}  $ be fixed, $\newaplus\in\Uxi$, and $\la_*$ be the same as in Proposition~\ref{prop:infGisreached}. Then $\la_*<\sigmaplus :=\sigma(\newaplus )$ iff $\newaplus\in\Vxi$; moreover, then $G'(\la_*)=0$. Correspondingly, $\la_*=\sigmaplus $ iff $\newaplus\in\Wxi$; in this case,
   \begin{equation}\label{leftlimG}
     \lim\limits_{\la\to\sigmaplus -}G_\xi'(\la)=\frac{m-\T_\xi(\sigmaplus )}{(\sigmaplus )^2}\leq 0.
   \end{equation}
\end{corollary}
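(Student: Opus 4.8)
The plan is to assemble Corollary~\ref{cor_alltogether} purely by bookkeeping, since both equivalences are already contained, piece by piece, in the two preceding propositions; the only work is to thread them together and to extract the limit formula \eqref{leftlimG}. First I would recall the trichotomy used throughout: either $\sigmaplus=\infty$; or $\sigmaplus<\infty$ with $\A_\xi(\sigmaplus)=\infty$; or $\sigmaplus<\infty$ with $\A_\xi(\sigmaplus)<\infty$. In the first two situations, the proof of Proposition~\ref{prop:infGisreached} (items 1 and 2 of the enumeration listing when ``Case 1'' occurs) shows that $G_\xi'$ has a zero $\mu\in(0,\sigmaplus)$, so $\la_*=\mu<\sigmaplus$ and $G_\xi'(\la_*)=0$; and by Definition~\ref{def:VWxi} these are exactly the first two ways a kernel lies in $\Vxi$. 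So on that part of the trichotomy the statement ``$\la_*<\sigmaplus \iff \newaplus\in\Vxi$, with $G_\xi'(\la_*)=0$'' is immediate and $\Wxi$ is vacuously excluded.

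Next I would treat the remaining case $\sigmaplus<\infty$, $\A_\xi(\sigmaplus)<\infty$, which is precisely where the dichotomy $\Vxi$ versus $\Wxi$ is genuine. Here Proposition~\ref{prop:infGisreached} (``Case 1'' item 3 versus ``Case 2'') says: either $G_\xi'$ attains a positive value somewhere on $(0,\sigmaplus)$, giving a zero $\mu=\la_*<\sigmaplus$ with $G_\xi'(\la_*)=0$; or \eqref{Gneg} holds, i.e. $G_\xi'<0$ on all of $(0,\sigmaplus)$, in which case $\la_*=\sigmaplus$. Proposition~\ref{prop:Gnegholds} identifies \eqref{Gneg} with the pair of conditions \eqref{posquant}--\eqref{smallm}, namely $\T_\xi(\sigmaplus)\in[m,\kap)$, which by Definition~\ref{def:VWxi} is exactly membership in $\Wxi$. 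Conversely, the negation of \eqref{Gneg} within this case (using $\T_\xi(\sigmaplus)<\kap$, established just before Proposition~\ref{prop:Gnegholds}, so the only way to fail \eqref{posquant}--\eqref{smallm} is $\T_\xi(\sigmaplus)<m$, possibly $-\infty$) is exactly the third defining alternative of $\Vxi$. This closes both equivalences, since $\Uxi=\Vxi\cup\Wxi$ is a disjoint union on this branch.

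Finally, for the displayed limit \eqref{leftlimG} I would start from the identity \eqref{Gder1}, $G_\xi'(\la)=\la^{-1}(F_\xi'(\la)-G_\xi(\la))=\la^{-2}H_\xi(\la)$ with $H_\xi$ as in \eqref{defH}, and from the rewriting \eqref{rewrH}, $H_\xi(\la)=m-\T_\xi(\la)$. When $\newaplus\in\Wxi$ we have $\A_\xi(\sigmaplus)<\infty$ and, by the monotone/dominated convergence argument already given before Proposition~\ref{prop:Gnegholds} together with \eqref{triv}, $\T_\xi(\la)\to\T_\xi(\sigmaplus)\in[m,\kap)\subset\R$ as $\la\to\sigmaplus-$; hence $H_\xi(\la)\to m-\T_\xi(\sigmaplus)$ and, dividing by $\la^2\to(\sigmaplus)^2$, $G_\xi'(\la)\to (m-\T_\xi(\sigmaplus))/(\sigmaplus)^2$, which is $\le0$ since $m\le\T_\xi(\sigmaplus)$. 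I do not anticipate a real obstacle here: the whole corollary is a packaging step, and the one point that needs a moment's care is making sure the case distinction in Definition~\ref{def:VWxi} matches line-by-line the case distinction in the proofs of the two propositions — in particular using the already-proved strict bound $\T_\xi(\sigmaplus)<\kap$ so that failing \eqref{smallm} genuinely lands one in the $\Vxi$ alternative (3) and nowhere else.
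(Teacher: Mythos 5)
Your proof is correct and it takes essentially the paper's (implicit) route: the paper declares the corollary an immediate consequence of Propositions~\ref{prop:infGisreached} and~\ref{prop:Gnegholds} without writing out the details, and you have simply unpacked that bookkeeping — matching the trichotomy on $(\sigmaplus, \A_\xi(\sigmaplus))$ against Definition~\ref{def:VWxi}, using the strict bound $\T_\xi(\sigmaplus)<\kap$ to see that failing \eqref{Gneg} is precisely the third $\Vxi$ alternative, and deriving \eqref{leftlimG} from $G_\xi'(\la)=\la^{-2}H_\xi(\la)$ together with the convergence $\T_\xi(\la)\to\T_\xi(\sigmaplus)$ established inside the proof of Proposition~\ref{prop:Gnegholds}.
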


\begin{example}\label{ex:spefunc}
To demonstrate the cases of Definition~\ref{def:VWxi} on an example, consider the following family of functions, cf. \eqref{exofaintro},
\begin{equation}\label{exofa}
\newaplus (s):=\frac{\alpha e^{-\mu|s|^p}}{1+|s|^q}, \quad s\in\R, p\geq0, q\geq0, \mu>0,
\end{equation}
where $\alpha>0$ is a normalizing constant to get $\int_\R \newaplus (s)\,ds=1$. Clearly, the case $p\in[0,1)$ implies $\sigma(\newaplus )=0$, that is impossible under assumption \eqref{aplusexpint1}. Next, $p>1$ leads to $\sigma(\newaplus )=\infty$, in particular, the corresponding $\newaplus\in\Vxi$. Let now $p=1$, then $\sigma(\newaplus )=\mu$. The case $q\in[0,1]$ gives $\A_\xi(\sigmaplus )=\infty$, i.e. $\newaplus\in\Vxi$ as well. In~the~case $q\in (1,2]$, we will have that $\A_\xi(\sigmaplus )<\infty$, however,$\int_\R s \newaplus (s)e^{\mu s}\,ds=\infty$, i.e. $\T_\xi(\mu)=-\infty$, and again $\newaplus\in\Vxi$. Let $q>2$; then, by \eqref{specfunc},
\begin{align*}
\T_\xi(\mu)&=\kap \alpha \int_{\R_-} \frac{1-\mu s}{1+|s|^q}e^{2\mu s}\,ds
+\kap \alpha \int_{\R_+} \frac{1-\mu s}{1+s^q}\,ds\\&\geq \kap \alpha \int_{\R_+} \frac{1-\mu s}{1+s^q}\,ds=\frac{\pi\kap \alpha }{q}\biggl(\frac{1}{\sin\frac{\pi}{q}}-\frac{\mu}{\sin\frac{2\pi}{q}}\biggr)\geq m,
\end{align*}
if only $\mu\leq 2\cos\frac{\pi}{q}- \frac{m q}{\kap \alpha\pi}\sin\frac{2\pi}{q}$ (note that $q>2$ implies $\sin\frac{2\pi}{q}>0$); then we have $\newaplus\in\Wxi$. On the other hand, using the inequality $te^{-t}\leq e^{-1}$, $t\geq0$, one gets
\begin{align}\label{muexpr}
  \T_\xi(\mu)&=\kap \alpha \int_{\R_+} \frac{(1+\mu s)e^{-2\mu s}+1-\mu s}{1+s^q}\,ds\\
  &\leq \kap \alpha \int_{\R_+} \frac{1+\frac{1}{2e}+1-\mu s}{1+s^q}\,ds=
  \frac{\pi\kap \alpha }{q}\biggl(\frac{1+4e}{2e\sin\frac{\pi}{q}}-\frac{\mu}{\sin\frac{2\pi}{q}}\biggr)<m,  \notag
\end{align}
if only $\mu>\frac{1+4e}{e}\cos\frac{\pi}{q}- \frac{m q}{\kap \alpha\pi}\sin\frac{2\pi}{q}$; then we have $\newaplus\in\Vxi$.
 Since
 \[
 \frac{d}{d\mu}\bigl((1+\mu s)e^{-2\mu s}+1-\mu s\bigr)=-se^{-2\mu s}(1+2s\mu)-s<0, \quad s>0, \mu>0,
 \]
we have from \eqref{muexpr}, that $\T_\xi(\mu)$ is strictly decreasing and continuous in $\mu$, therefore, there exist a critical value
\[
\mu_*\in \Bigl(2\cos\frac{\pi}{q}- \frac{m q}{\kap \alpha\pi}\sin\frac{2\pi}{q},(4+e^{-1})\cos\frac{\pi}{q}- \frac{m q}{\kap \alpha\pi}\sin\frac{2\pi}{q}\Bigr),
\]
such that, for all $\mu>\mu_*$, $\newaplus\in\Vxi$, whereas, for $\mu\in(0,\mu_*]$, $\newaplus\in\Wxi$.
\end{example}

Now we are ready to prove the main statement of this subsection.

\begin{theorem}\label{thm:speedandprofile}
 Let $\xi\in S^{d-1}  $ be fixed and $\newaplus\in\Uxi$. Let $c_*(\xi)$ be the minimal traveling wave speed according to Theorem~\ref{thm:trwexists}, and let, for any $c\geq c_*(\xi)$, the function $\psi=\psi_c\in\M$ be a traveling wave profile corresponding to the speed~$c$. Let $\la_*\in I_\xi$ be the same as in Proposition~\ref{prop:infGisreached}. Denote, as usual, $\sigmaplus :=\sigma(\newaplus )$. Then
\begin{enumerate}
\item Theorem \ref{thm:trwcandpsi} holds.
\item For $\newaplus\in\Vxi$, one has $\la_*<\sigmaplus $ and there exists another representation for the minimal speed than \eqref{speedviaabs}, namely,
\begin{equation}\label{minspeed-spec}
    \begin{split}c_*(\xi)&= \kap \int_{\X} x\cdot \xi \, \aplus (x) e^{\la_* x\cdot \xi} \,dx\\&=\kap \int_\R s\newaplus (s)e^{\la_* s}\,ds>\kap \m_\xi.
    \end{split}
\end{equation}
Moreover, for all $\la\in(0,\la_*]$,
\begin{equation}\label{estonm}
\T_\xi(\la)\geq m,
\end{equation}
and the equality holds for $\la=\la_*$ only.
\item For $\newaplus\in\Wxi$, one has $\la_*=\sigmaplus $.
Moreover, the inequality \eqref{estonm} also holds as well as, for all $\la\in(0,\la_*]$,
\begin{equation}
    c\geq \kap \int_\R s\newaplus (s)e^{\la s}\,ds,\label{estonc}
\end{equation}
whereas the equalities in \eqref{estonm} and \eqref{estonc} hold true now for $m=\T_\xi(\sigmaplus )$, $\la=\la_*$, $c=c_*(\xi)$ only.
\end{enumerate}
\end{theorem}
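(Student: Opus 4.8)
The plan is to apply the bilateral Laplace transform $\L$ to the profile equation \eqref{eq:trw} and to read off all of Theorem~\ref{thm:trwcandpsi} (and parts 2--3) from one scalar identity. Put $N(s):=\kn\psi(s)(\newaminus*\psi)(s)+\kl\psi^2(s)$; it is nonnegative, not identically zero (by \eqref{eq:nondegencomp}), decreasing, and $N(s)\le\kam\theta\,\psi(s)$, so \eqref{eq:trw} reads $c\psi'=m\psi+N-\kap(\newaplus*\psi)$. Using Lemma~\ref{lem:allaboutLapl} — \ref{L-derivative} (with $\psi(+\infty)=0$; for $c=0$ that term is simply absent), \ref{L-convolution} (note $\newaplus,\newaminus\in L^1\cap L^\infty$ and $\sigma(\newaminus)\ge\sigma(\newaplus)>0$ by \eqref{as:aplus_gr_aminus}--\eqref{boundedkernels}), and the bounds \eqref{absofsq2}, \eqref{absofsq} — one gets, on $0<\Re z<q:=\min\{\sigma(\psi),\sigmaplus\}$ (here $\sigmaplus:=\sigma(\newaplus)$),
\[
(\L N)(z)=z\,(\L\psi)(z)\bigl(G_\xi(z)-c\bigr),\qquad \sigma(N)>\sigma(\psi).
\]
Taking $z=\la\in(0,q)$ real, $(\L N)(\la)>0$ and $\la(\L\psi)(\la)>0$ give $G_\xi(\la)>c$, and $N\le\kam\theta\psi$ gives in addition $G_\xi(\la)\le c+\kam\theta/\la$.

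Next I would locate $\sigma(\psi)$. The bound $G_\xi(\la)\le c+\kam\theta/\la$ together with $G_\xi(+\infty)=\infty$ when $\sigmaplus=\infty$ (which holds under \eqref{as:a+nodeg-mod}, cf. the proof of Proposition~\ref{prop:infGisreached}) excludes $\sigma(\psi)=\sigmaplus=\infty$; and if $\sigma(\psi)>\sigmaplus$, the same bound forces $\A_\xi(\sigmaplus)<\infty$, whereupon rewriting the identity as $\kap(\L\newaplus)(z)=m+cz+(\L N)(z)/(\L\psi)(z)$ — with $\L N$ analytic near $\sigmaplus$ (since $\sigma(N)>\sigma(\psi)>\sigmaplus$) and $(\L\psi)(\sigmaplus)\in(0,\infty)$ — would continue $\L\newaplus$ analytically past $\sigmaplus$, contradicting \ref{L-singular}. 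Hence $\sigma(\psi)\le\sigmaplus$ and $\sigma(\psi)<\infty$ in every case. Split into \textbf{(a)} $\sigma(\psi)<\sigmaplus$ and \textbf{(b)} $\sigma(\psi)=\sigmaplus<\infty$. In case (a), $z\mapsto z(G_\xi(z)-c)$ and $\L N$ are analytic at $\sigma(\psi)$ while $\L\psi$ is singular there (\ref{L-singular}), so the identity forces $G_\xi(\sigma(\psi))=c$; combined with $G_\xi>c$ on $(0,\sigma(\psi))$ and the monotonicity of $G_\xi$ from Proposition~\ref{prop:infGisreached}, this yields $\sigma(\psi)\le\la_*$ and $c=G_\xi(\sigma(\psi))$ (i.e. \eqref{speedviaabs}); letting $z\to\sigma(\psi)-$ and using that $G_\xi(z)-c$ has a simple zero at $\sigma(\psi)$ if $\sigma(\psi)<\la_*$ and a double zero if $\sigma(\psi)=\la_*$ (there $G_\xi'(\la_*)=0$, $G_\xi''(\la_*)>0$, by Corollary~\ref{cor_alltogether} and the proof of Proposition~\ref{prop:infGisreached}), while $0<(\L N)(\sigma(\psi))<\infty$, gives $(\L\psi)(\sigma(\psi))=\infty$ (i.e. \eqref{Laplinfatabs}). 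In case (b), the bound again gives $\A_\xi(\sigmaplus)<\infty$, and letting $z\to\sigmaplus-$ in the identity leaves two possibilities: either $G_\xi(\sigmaplus)=c$ — which rules out $\newaplus\in\Vxi$ (there $\la_*<\sigmaplus$ and $G_\xi$ is increasing on $(\la_*,\sigmaplus]$, forcing $c=G_\xi(\sigmaplus)>G_\xi(\la_*)>c$), hence $\newaplus\in\Wxi$, $\sigma(\psi)=\la_*=\sigmaplus$, $c=G_\xi(\la_*)$, and (from the identity at $\sigmaplus$, since $(\L N)(\sigmaplus)>0$) $(\L\psi)(\sigmaplus)=\infty$ — or $G_\xi(\sigmaplus)>c$ and $(\L\psi)(\sigmaplus)<\infty$.

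The remaining, and only non-formal, step is to exclude this last sub-case; I expect it to be the main obstacle. Here $\widetilde\psi(s):=\psi(s)e^{\sigmaplus s}\ge0$ lies in $L^1$ with $\widetilde\psi(\pm\infty)=0$. The idea is that the singularity of $\L\newaplus$ at $\sigmaplus$ is, under \eqref{firstmoment} (and \eqref{secmomaddintro} in the boundary subcase \eqref{eq:critcasem}), of controlled power/logarithmic type, so that an Ikehara--Delange Tauberian theorem applied to $(\L\psi)(z)=(\L N)(z)\bigl(\kap(\L\newaplus)(z)-m-cz\bigr)^{-1}$ yields $\psi(s)\sim D\,s^{-\gamma}e^{-\sigmaplus s}$ with $\gamma>1$ as $s\to\infty$; inserting this into \eqref{eq:trw}, together with $N(s)=o\!\left(s^{-\gamma}e^{-\sigmaplus s}\right)$ (from $\sigma(N)>\sigmaplus$) and $(\newaplus*\psi)(s)\sim\A_\xi(\sigmaplus)\psi(s)$, forces $\kap\A_\xi(\sigmaplus)-m=c\,\sigmaplus$, i.e. $G_\xi(\sigmaplus)=c$, contradicting $G_\xi(\sigmaplus)>c$. (This is the technical heart, shared with Theorem~\ref{thm:asympandunic}.) Consequently cases (a) and (b) together give: $\sigma(\psi_c)\in(0,\la_*]$, $c=G_\xi(\sigma(\psi_c))$, $(\L\psi_c)(\sigma(\psi_c))=\infty$, and (as $c$ is determined by $\sigma(\psi_c)$) the map $\sigma(\psi_c)\mapsto c$ is strictly decreasing. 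By \eqref{ineqwillbeeq} and Proposition~\ref{prop:infGisreached}, $c_*\le G_\xi(\la_*)$; since profiles exist for every $c\ge c_*$ (Theorem~\ref{thm:trwexists}) and $c=G_\xi(\sigma(\psi_c))\ge G_\xi(\la_*)$, taking $c=c_*$ yields $c_*=G_\xi(\la_*)$; then $G_\xi$ is a strictly decreasing bijection $(0,\la_*]\to[c_*,\infty)$, the map $\sigma(\psi_c)\mapsto c$ coincides with $G_\xi$ on $\{\sigma(\psi_c):c\ge c_*\}$, and since its range is $[c_*,\infty)=G_\xi\bigl((0,\la_*]\bigr)$ with $G_\xi$ injective there, its domain is all of $(0,\la_*]$ and $\sigma(\psi_{c_*})=\la_*$. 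This is \eqref{minspeed}--\eqref{eq:argminofGisabs}, i.e. part 1.

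Finally, parts 2 and 3 follow from Propositions~\ref{prop:infGisreached}, \ref{prop:Gnegholds} and Corollary~\ref{cor_alltogether}. For $\newaplus\in\Vxi$, Corollary~\ref{cor_alltogether} gives $\la_*<\sigmaplus$ and $G_\xi'(\la_*)=0$, so by \eqref{Gder1}, \eqref{gatmin}, $c_*=G_\xi(\la_*)=F_\xi'(\la_*)=\kap\int_\R s\,\newaplus(s)e^{\la_*s}\,ds>\kap\m_\xi$ (the inequality by \eqref{posderL}), which is \eqref{minspeed-spec}; and since, by \eqref{Gder1}, \eqref{rewrH}, $\T_\xi(\la)=m-\la^2G_\xi'(\la)$ on $(0,\sigmaplus)$ while $G_\xi'<0$ on $(0,\la_*)$ and $G_\xi'(\la_*)=0$, we get \eqref{estonm} with equality only at $\la_*$. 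For $\newaplus\in\Wxi$, Corollary~\ref{cor_alltogether} gives $\la_*=\sigmaplus$, Proposition~\ref{prop:Gnegholds} gives $G_\xi'<0$ on $(0,\sigmaplus)$, so $\T_\xi(\la)=m-\la^2G_\xi'(\la)>m$ for $\la\in(0,\la_*)$ and $\T_\xi(\la_*)=\T_\xi(\sigmaplus)\ge m$ with equality iff $m=\T_\xi(\sigmaplus)$, giving \eqref{estonm}; and since $F_\xi'$ is strictly increasing on $(0,\sigmaplus)$ by \eqref{secderA} with, by \eqref{Gder1} and \eqref{leftlimG}, $\lim_{\la\to\sigmaplus-}F_\xi'(\la)=c_*+(m-\T_\xi(\sigmaplus))/\sigmaplus\le c_*\le c$, we obtain $\kap\int_\R s\,\newaplus(s)e^{\la s}\,ds=F_\xi'(\la)\le c$ for $\la\in(0,\la_*]$, i.e. \eqref{estonc}, with equality in \eqref{estonm}--\eqref{estonc} exactly for $\la=\la_*$, $m=\T_\xi(\sigmaplus)$, $c=c_*$.
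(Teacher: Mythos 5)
Your plan reproduces the paper's Steps~1--3 faithfully: the Laplace identity, the deduction $\sigma(\newaplus)\geq\sigma(\psi)<\infty$, the analytic-continuation argument that forces $G_\xi(\sigma(\psi))=c$ when $\sigma(\psi)<\sigmaplus$, and the algebra behind parts 2--3 via \eqref{Txirepr}, \eqref{Gder1}, \eqref{leftlimG}. You have also correctly located the one genuinely hard spot: ruling out a wave with $c_*\leq c<\inf G_\xi$ when $\sigma(\psi)=\sigmaplus<\infty$, $G_\xi(\sigmaplus)>c$, $(\L\psi)(\sigmaplus)<\infty$.

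However, the way you propose to close that case does not work, and this is a real gap. You invoke an Ikehara--Delange theorem to extract an asymptotic $\psi(s)\sim Ds^{-\gamma}e^{-\sigmaplus s}$ and then feed it back into \eqref{eq:trw}. Proposition~\ref{prop:tauber} (the Ikehara theorem the paper does use, in the proof of Theorem~\ref{thm:asympandunic}) requires the singular representation \eqref{eq:sing_repres}, i.e.\ that $(\L\psi)(z)$ split as $F(z)/(\mu-z)^j + H(z)$ with $F$ continuous up to $\Re z=\mu$ and $H$ holomorphic across it. In the sub-case you are trying to exclude, the characteristic function $\h_{\xi,c}(z)=\kap(\L\newaplus)(z)-m-cz$ does \emph{not} vanish at $z=\sigmaplus$; the singularity of $\L\psi$ at $\sigmaplus$ is inherited entirely from the singularity of $\L\newaplus$ there, and the standing hypotheses \eqref{as:chiplus_gr_m}--\eqref{firstmoment} say nothing about the type of that singularity (note \eqref{firstmoment} controls a moment of $\newaplus$ at $0$, not at $\sigmaplus$; \eqref{secmomaddintro} is not even an assumption of this theorem). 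So the required decomposition is not available, and the asserted asymptotic is unjustified. You remark that this step is ``shared with Theorem~\ref{thm:asympandunic},'' but it is not: Proposition~\ref{prop:charfuncpropert} and Proposition~\ref{asymptexact} only treat the situation where $\h_{\xi,c}$ has a root of multiplicity $1$ or $2$ at $\sigma(\psi)$, which is precisely the situation you have already reduced to; the sub-case you are trying to kill never arises there.

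The paper closes this gap by a completely different device (its Step~4): it truncates $\newaplus$ to kernels $\newaplus_{R_n}$ with one-sided cutoff, so that $\sigma(\newaplus_n)=\infty$ and hence, by Step~3 applied to the truncated equation, the truncated minimal speed equals $\inf_{\la>0}G^{(n)}_\xi(\la)$. Using sub-/super-solution comparison (\cite[Prop.\ 2.10, Rem.\ 3.4, Thm.\ 1.1]{FKT100-1} together with \cite[Thm.\ 5]{Yag2009}) it produces a traveling wave with speed $c$ for the truncated problem, so $c\geq\inf_\la G^{(n)}_\xi(\la)$; it then proves $\inf_\la G^{(n)}_\xi(\la)\to\inf_\la G_\xi(\la)$ via a delicate analysis of $\la_*^{(n)}$ and $\T^{(n)}_\xi$. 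That limit argument is the actual technical heart. Your proposal would need either that construction or an independent, fully justified argument controlling the singularity of $\L\newaplus$ at $\sigmaplus$; as written, the sketch of the Tauberian step cannot be completed under the stated hypotheses.
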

\begin{proof}
By Theorem~\ref{thm:trwexists}, for any $c\geq c_*(\xi)$,
there exists a profile $\psi\in\M$, cf.~Remark~\ref{shiftoftrw}, which define a traveling wave solution \eqref{eq:deftrw} to \eqref{eq:basic} in the direction $\xi$. Then, by \eqref{eq:trw}, we get
\begin{multline}
	-c\psi'(s) = \kap (\newaplus*\psi)(s) - m\psi(s) \\- \kl\psi^2(s)
			-\kn\psi(s) (\newaminus*\psi)(s), \ s\in\R.\label{eq:trwder}
\end{multline}

\textit{Step 1}.
By \eqref{eq:trwexpint}, we have that $\sigma(\psi)>0$.
Rewrite \eqref{as:aplus_gr_aminus} as follows
\begin{equation}\label{acheckpos}
  \kap \newaplus (s)\geq \kn \theta \newaminus (s), \quad \text{a.a.} \ s\in\R,
\end{equation}
therefore, $\sigma(\newaminus )\geq \sigma(\newaplus )>0$, if $\kn>0$. Take any $z\in\mathbb{C}$ with
\begin{multline}\label{unexpineq}
	\quad 0<\Re z<\min\bigl\{\sigma(\newaplus ),\sigma(\psi)\bigr\}\leq \sigma(\psi)
		\\<\min\{ \sigma(\psi^2),\sigma\bigl(\psi(\newaminus *\psi)\bigr)\},\quad 
\end{multline}
where the later inequality holds by \eqref{absofsq2} and \eqref{absofsq}. As a result, by \ref{L-convolution}, \ref{L-decaying}, being multiplied on $e^{z s}$ the l.h.s.\,of \eqref{eq:trwder} will be  integrable (in $s$) over $\R$. Hence,~for~any~$z$ which satisfies \eqref{unexpineq}, $(\L \psi')(z)$ converges. By~\ref{L-derivative}, it yields $\sigma(\psi)\geq\sigma(\psi')\geq\min\bigl\{\sigma(\newaplus ),\sigma(\psi)\bigr\}$.

Therefore, by \eqref{LaplaceofDer}, \eqref{LaplaceofConv}, we get from \eqref{eq:trwder}
\begin{multline}
	c z  (\L \psi)(z)=\kap (\L \newaplus )(z) (\L\psi)(z)-m(\L\psi)(z)\\
		-\kl\bigl( \L(\psi^2) \bigr)(z) -\kn\bigl(\L(\psi(\newaminus *\psi))\bigr)(z),\label{eqforLtr}
\end{multline}
if only
\begin{equation}\label{condz}
0<\Re z<\min\bigl\{\sigma(\newaplus ),\sigma(\psi)\bigr\}.
\end{equation}

Since $\psi\not\equiv 0$, we have that $(\L \psi)(z)\neq 0$, therefore, one can rewrite \eqref{eqforLtr} as follows
\begin{equation}\label{neweq}
	G_\xi(z)-c = \frac{\kl \bigl( \L(\psi^2) \bigr)(z)+\kn\bigl(\L(\psi(\newaminus *\psi))\bigr)(z)}{z(\L \psi)(z)},
\end{equation}
if \eqref{condz} holds.
By~\eqref{unexpineq}, both nominator and denominator in the r.h.s. of \eqref{neweq} are analytic on $0<\Re z<\sigma(\psi)$, therefore.
Suppose that $\sigma(\psi)>\sigma(\newaplus )$, then \eqref{neweq} holds on $0<\Re z<\sigma(\newaplus )$, however, the r.h.s. of \eqref{neweq} would be analytic at $z=\sigma(\newaplus )$, whereas, by \ref{L-singular}, the l.h.s. of \eqref{neweq} has a singularity at this point. As a result,
\begin{equation}\label{absofkernelisbigger}
\sigma(\newaplus )\geq\sigma(\psi),
\end{equation}
for any traveling wave profile $\psi\in\M$.
Thus one gets that \eqref{neweq} holds true on
$0<\Re z<\sigma(\psi)$.

Prove that
\begin{equation}\label{finiteabswavenew}
\sigma(\psi)<\infty.
\end{equation}
	Since $0\leq\psi\leq\theta$ yields $0\leq \aminus *\psi\leq\theta$, one gets from \eqref{neweq} that, for any $0<\la<\sigma(\psi)$,
\begin{equation}
c\geq G_\xi(\la)-\kam\dfrac{\theta}{\la}
=\frac{\kap (\L\newaplus )(\la)-\kap }{\la}.\label{lowestforspeed}
\end{equation}
If $\sigma(\newaplus )<\infty$ then \eqref{finiteabswavenew} holds by \eqref{absofkernelisbigger}. Suppose that $\sigma(\newaplus )=\infty$. By \eqref{toinfasltoinf}, the r.h.s. of \eqref{lowestforspeed} tends to $\infty$ as $\la\to\infty$, thus the latter inequality cannot hold for all $\la>0$; and, as a result, \eqref{finiteabswavenew} does hold.

\textit{Step 2}. Recall that \eqref{ineqwillbeeq} holds. Suppose that $c\geq c_*(\xi)$ is such that, cf.~\eqref{arginf},
\begin{equation}\label{cbiginf}
c\geq G_\xi(\la_*)=\inf_{\sigmaplus \in (0,\la_*] } G_\xi(\la)=\inf_{\sigmaplus \in I_\xi } G_\xi(\la).
\end{equation}
Then, by Proposition~\ref{prop:infGisreached}, the equation $G_\xi(\la)=c$, $\la\in I_\xi $, has one or two solutions. Let $\la_c$ be the unique solution in the first case or the smaller of the solutions in the second one. Since $G_\xi$ is decreasing on $(0,\la_*]$, we have $\la_c\leq \la_*$. Since the nominator in the r.h.s. of \eqref{neweq} is positive, we immediately get from \eqref{neweq} that
\begin{equation}\label{eqdopinfty}
(\L\psi)(\la_c)=\infty,
\end{equation}
therefore, $\la_c\geq\sigma(\psi)$.
On the other hand, one can rewrite \eqref{neweq} as follows
\begin{equation}\label{rewriting}
	(\L\psi)(z) = \frac{\kl\bigl( \L(\psi^2) \bigr)(z)+\kn\bigl(\L(\psi(\newaminus *\psi))\bigr)(z)}{z(G_\xi(z)-c)}.
\end{equation}
By \eqref{neweq}, $G_\xi(z)\neq c$, for all $0<\Re z<\sigma(\psi)\leq\la_c\leq\la_*\leq\sigma(\newaplus )$. As a result, by \eqref{unexpineq}, \ref{L-converges}, and \ref{L-singular}, $\la_c=\sigma(\psi)$, that together with \eqref{eqdopinfty} proves \eqref{finitespeed} and \eqref{Laplinfatabs}, for waves whose speeds satisfy \eqref{cbiginf}. By \eqref{aplusexpint1}, \eqref{eq:dedAxi},  we immediately get, for such speeds, \eqref{speedviaabs} as well. Moreover, \eqref{speedviaabs} defines a strictly monotone function $(0,\la_*]\ni\sigma(\psi)\mapsto c\in[G_\xi(\la_*),\infty)$.

Next, by \eqref{specfunc}, \ref{L-analytic}, \eqref{defF}, \eqref{Gder1}, we have that, for any $\la\in I_\xi$,
\begin{equation}
  \T_\xi(\la)=\kap \A_\xi(\la)-\kap \la\A_\xi'(\la)=
m+F_\xi(\la)-\la F_\xi'(\la)=m-\la^2 G_\xi'(\la).\label{Txirepr}
\end{equation}
Recall that, by Proposition~\ref{prop:infGisreached}, the function $G_\xi$ is strictly decreasing on $(0,\la_*)$. Then \eqref{Txirepr} implies that $\T_\xi(\la)>m$, $\la\in(0,\la_*)$. On the other hand, by the second equality in \eqref{Gder1}, the inequality $G_\xi'(\la)<0$, $\la\in(0,\la_*)$, yields $G_\xi(\la)>F_\xi'(\la)$, for such a $\la$.
Let $c>G_\xi(\la_*)$. By \eqref{speedviaabs}, \eqref{defF}, we have then
$c>\kap \A_\xi'(\la)$, for all $\la\in[\sigma(\psi),\la_*)$. By~\eqref{secderA}, $F_\xi'$ is increasing, hence, by~\ref{L-analytic}, the strict inequality in \eqref{estonc} does hold, for $\la\in(0,\la_*)$.

 Let again $c\geq G_\xi(\la_*)$, and let $\newaplus\in\Vxi$. Then, by~Corollary~\ref{cor_alltogether},
 $\la_*<\sigma(\newaplus )$ and $G'(\la_*)=0$. By~\eqref{Gder1}, the latter equality and \eqref{Txirepr} give $\T_\xi(\la_*)=m$, that fulfills the proof of \eqref{estonm}, for such $\newaplus$ and $m$. Moreover, by \eqref{gatmin},
 \begin{equation}\label{dopequal}
   G_\xi(\la_*)=\kap \A_\xi'(\la_*)=\kap \int_\R s\newaplus (s)e^{\la_* s}\,ds.
 \end{equation}
 Let $\newaplus\in\Wxi$, then $\la_*=\sigma(\newaplus )$.
 It means that $\T_\xi(\la_*)=m$ if $m=\T_\xi(\sigmaplus )$ only, otherwise, $\T_\xi(\la_*)>m$.
 Next, we get from \eqref{cbiginf}, \eqref{Gder1}
 \eqref{leftlimG},
 \begin{equation}\label{dsgdsggdetwet}
  c\geq G_\xi(\la_*)\geq \lim\limits_{\la\to\la_*-}F_\xi'(\la)
  =\kap \int_\R s\newaplus (s)e^{\la_* s}\,ds,
 \end{equation}
 where the latter equality may be easily verified if we rewrite, for $\la\in(0,\la_*)$,
 \begin{equation}\label{tew463634436}
   F_\xi'(\la)=\kap \int_{\R_-}s\newaplus (s)e^{\la s}\,ds
 +\kap \int_{\R_+}s\newaplus (s)e^{\la s}\,ds,
 \end{equation}
 and apply the dominated convergence theorem to the first integral and the monotone convergence theorem for the second one. On the other hand, \eqref{leftlimG} implies that the second inequality in \eqref{dsgdsggdetwet} will be strict iff $m<\T_\xi(\sigmaplus )$, whereas, for $c=G_\xi(\la_*)=\inf\limits_{\la>0}G_\xi(\la)$ and $m=\T_\xi(\sigmaplus )$, we will get all equalities in \eqref{dsgdsggdetwet}.

\textit{Step 3}. Let now $c\geq c_*(\xi)$ and suppose that $\sigma(\newaplus )>\sigma(\psi)$.
Prove that \eqref{cbiginf} does hold.
On the contrary, suppose that the $c$ is such that
\begin{equation}\label{hypoteticspeed}
c_*(\xi)\leq c<\inf_{\la\in (0,\la_*] } G_\xi(\la)=\inf_{\la>0 } G_\xi(\la).
\end{equation}
Again, by \eqref{neweq}, $G_\xi(z)\neq c$, for all $0<\Re z<\sigma(\psi)$, and \eqref{rewriting} holds, for such a~$z$.
Since we supposed that $\sigma(\newaplus )>\sigma(\psi)$, one gets from
\eqref{unexpineq}, that both
nominator and denominator of the r.h.s. of \eqref{rewriting} are analytic on
\[
\{0<\Re z<\nu\} \supsetneq \{0<\Re z<\sigma(\psi)\},
\]
where $\nu=\min\bigl\{ \sigma(\newaplus ),\sigma\bigl(\psi(\newaminus *\psi)\bigr), \sigma(\psi^2) \bigr\}$.
On the other hand, \ref{L-singular} implies that $\L\psi$ has a singularity at $z=\sigma(\psi)$. Since 
\[
	\min \{ \bigl(\L(\psi^2)\bigr) (\sigma(\psi)) , \bigl(\L(\psi(\newaminus *\psi))\bigr)(\sigma(\psi)) \} > 0,
\]
the equality \eqref{rewriting} would be possible if only $G_\xi(\sigma(\psi))=c$, that contradicts \eqref{hypoteticspeed}.

\textit{Step 4}. By \eqref{absofkernelisbigger}, it remains to prove that, for $c\geq c_*(\xi)$, \eqref{cbiginf} does holds, provided that we have $\sigma(\newaplus )=\sigma(\psi)$.
Again on the contrary, suppose that \eqref{hypoteticspeed} holds.
For $0<\Re z<\sigma(\psi)$, we can rewrite \eqref{eqforLtr} as follows
\begin{equation}\label{rewriting2}
z(\L\psi)(z)(G_\xi(z)-c) = \kl\bigl( \L(\psi^2) \bigr)(z) + \kn\bigl(\L(\psi(\newaminus *\psi))\bigr)(z).
\end{equation}
In the notations of the proof of Lemma~\ref{lem:allaboutLapl}, the functions
$\L^-\psi$ and $\L^-\newaplus $ are analytic on $\Re z>0$. Moreover,
$(\L^+\psi)(\la)$ and $(\L^+\newaplus )(\la)$ are increasing on $0<\la<\sigma(\newaplus )=\sigma(\psi)$.
Then, cf.~\eqref{tew463634436}, by the monotone convergence theorem, we will get from \eqref{rewriting2} and \eqref{unexpineq}, that
\begin{equation}\label{finiteleftlims}
  \int_\R \psi(s) e^{\sigma(\psi)s}\,ds<\infty, \qquad
\int_\R \newaplus (s) e^{\sigma(\newaplus )s}\,ds<\infty.
\end{equation}

We are going to apply now \cite[Proposition 2.10]{FKT100-1}, in the case $d=1$, to the equation
\begin{equation}
  \begin{cases}
    \begin{aligned}
      \dfrac{\partial \phi}{\partial t}(s,t)&=\kap (\newaplus *\phi)(s,t)-m\phi(s,t) -\kl \phi^2(s,t) \\&\quad
        -\kn\phi(s,t)(\newaminus *\phi)(s,t), \qquad t>0, \ \mathrm{a.a.}\ s\in\R,
    \end{aligned}\\
    \phi(s,0)=\psi(s),\qquad \mathrm{a.a.}\ s\in\R,
  \end{cases}\label{eq:basic_one_dim}
\end{equation}
 where the initial condition $\psi$ is a wave profile with the speed $c$ which satisfies \eqref{hypoteticspeed}. Namely, we set $\Delta_R:=(-\infty,R)\nearrow\R$, $R\to\infty$ and
\begin{align}
  a_R^{\pm}(s)&:=\1_{\Delta_R}(s)a^{\pm}(s),\quad s\in\R,\label{trkern}\\
  A_R^\pm&:=\int_{\Delta_R}a^\pm(x)\,dx \nearrow 1, \quad R\to\infty.\label{ARdef}
\end{align}
Consider a strictly monotone sequence $\{R_n\mid n\in\N\}$, such that $0<R_n\to\infty$, $n\to\infty$ and
\begin{equation}\label{iiaslater}
  A_{R_n}^+>\frac{m}{\kap }\in(0,1).
\end{equation}
Let $\theta_n:=\theta_{R_n}$ be given by 
\begin{equation}\label{defofthetaR}
  \theta_{R_n}=\dfrac{\ka^+A_{R_n}^+-m}{\kn A_{R_n}^- + \kl}\to \theta, \quad R_n\to\infty.
\end{equation}
Then, by \cite[formula (2.17)]{FKT100-1}, $\theta_n\leq \theta$, $n\in\N$. 

Fix an arbitrary $n\in\N$. Consider the `truncated' equation \eqref{eq:basic_one_dim} with $\newaplusminus$ replaced by $\newaplusminus_{R_n}$, and the initial condition $w_0(s):=\min\{\psi(s),\theta_n\}\in C_{ub}(\R)$. By~\cite[Proposition~2.10]{FKT100-1}, there exists the unique solution $w^{(n)}(s,t)$ of the latter equation. Moreover, if we denote the corresponding nonlinear mapping by~$\tilde{Q}^{(n)}_t$, we will have from \cite[formulas (2.15)--(2.16)]{FKT100-1}, that
\begin{equation}\label{saewtwetwtgvxc}
  (\tilde{Q}^{(n)}_t w_0)(s)\leq\theta_n, \quad s\in\R, t\geq0,
\end{equation}
 and
\begin{equation}\label{s4366643}
  (\tilde{Q}^{(n)}_t w_0)(s)\leq \phi(s,t),
\end{equation}
where $\phi$ solves \eqref{eq:basic_one_dim}. 

By~\cite[Remark 3.4]{FKT100-1}, we get from \eqref{s4366643} that $(\tilde{Q}^{(n)}_1 w_0)(s+c)\leq \psi(s)$, $s\in\R$. The latter inequality together with \eqref{saewtwetwtgvxc} imply
\begin{equation}\label{wavedoesexists}
  (\tilde{Q}^{(n)}_1 w_0)(s+c)\leq w_0(s).
\end{equation}
Then, by the same arguments as in the proof of \cite[Theorem 1.1]{FKT100-1}, we obtain from \cite[Theorem 5]{Yag2009} that there exists a traveling wave $\psi_n$ for the equation \eqref{eq:basic_one_dim} with $\newaplusminus$ replaced by $\newaplusminus_{R_n}$, whose speed will be exactly $c$ (and $c$ satisfies \eqref{hypoteticspeed}).

Now we are going to get a contradiction, by proving that
\begin{equation}\label{infsareequal}
  \inf_{\la>0}G_\xi(\la)=\lim_{n\to\infty}\inf_{\la>0}G^{(n)}_\xi(\la),
\end{equation}
where $G^{(n)}_\xi$ is given by \eqref{dedGxi} with $ \newaplusminus $ replaced by
$a_{n}^\pm:=a_{R_n}^\pm$.
The sequence of functions $G_\xi^{(n)}$ is point-wise monotone in $n$ and it converges to $G_\xi$ point-wise, for $0<\la\leq \sigma(\newaplus )$; note we may include $\sigma(\newaplus )$ here, according to \eqref{finiteleftlims}. Moreover, $G_\xi^{(n)}(\la)\leq G_\xi(\la)$, $0<\la\leq \sigma(\newaplus )$. As a result, for any $n\in\N$,
\begin{equation}\label{asft346eyeryd}
G_\xi^{(n)}(\la_*^{(n)})=\inf_{\la>0}G_\xi^{(n)}(\la)
\leq \inf_{\la>0}G_\xi(\la)=G_\xi(\la_*).
\end{equation}
Hence if we suppose that \eqref{infsareequal} does not hold, then
\[
\inf\limits_{\la>0}G_\xi(\la)-\lim\limits_{n\to\infty}\inf\limits_{\la>0}G^{(n)}_\xi(\la)>0.
\]
Therefore, there exist $\delta>0$ and $N\in\N$, such that
\begin{equation}\label{wqwqwqwqwe}
  G_\xi^{(n)}(\la_*^{(n)})=\inf\limits_{\la>0}G^{(n)}_\xi(\la)\leq \inf\limits_{\la>0}G_\xi(\la)-\delta=G_\xi(\la_*)-\delta, \quad n\geq N.
\end{equation}

Clearly, \eqref{trkern} with $\Delta_{R_n}=(-\infty,R_n)$ implies that $\sigma(\newaplus _n)=\infty$, hence $G^{(n)}_\xi$ is analytic on~\mbox{$\Re z>0$}. One can repeat all considerations of the first three steps of this proof for the equation \eqref{eq:basic_one_dim}. Let $c^{(n)}_*(\xi)$ be the corresponding minimal traveling wave speed, according to Theorem~\ref{thm:trwexists}. Then the corresponding inequality \eqref{finiteabswavenew} will show that the abscissa of an arbitrary traveling wave to \eqref{eq:basic_one_dim} (with $\newaplusminus$ replaced by $\newaplusminus_{R_n}$) is less than $\sigma(a_{n}^+)=\infty$. As a result, the inequality
$c^{(n)}_*(\xi)<\inf\limits_{\la>0 } G^{(n)}_\xi(\la)$, cf.~\eqref{hypoteticspeed}, is impossible, and hence, by the Step~3,
\begin{equation}\label{minspeedfortrunc}
  c\geq c^{(n)}_*(\xi)=\inf_{\la>0 } G^{(n)}_\xi(\la)=G^{(n)}_\xi(\la_*^{(n)}),
\end{equation}
where $\la_*^{(n)}$ is the unique zero of the function $\frac{d}{d\la}G^{(n)}_\xi (\la)$. Let $\T_\xi^{(n)}$ be given on $(0,\infty)$ by \eqref{specfunc} with $\newaplus $ replaced by
${a}_n^+$. Then
\begin{equation}\label{derofT}
  \frac{d}{d\la}\T_\xi^{(n)}(\la)=-\la\kap  \int_{-\infty}^{R_n} \newaplus (s) s^2 e^{\la s}\,ds<0, \quad \la>0.
\end{equation}
By \eqref{estonm}, the unique point of intersection of the strictly decreasing function
$y=\T_\xi^{(n)}(\la)$ and the horizontal line $y=m$ is exactly the point  $(\la_*^{(n)},0)$.

Prove that there exist $\la_1>0$, such that $\la_*^{(n)}>\la_1$, $n\geq N$, and there exists $N_1\geq N$, such that $\T_\xi^{(n)}(\la)\leq \T_\xi^{(m)}(\la)$, $n> m\geq N_1$, $\la\geq\la_1$.
Recall that \eqref{iiaslater} holds; we have
\begin{align*}
  \la G_\xi^{(n)}(\la) & = \kap \int_\R \newaplus _n(s) (e^{\la s}-1)\,ds +\kap  {A}^+_{R_n}-m\\
  & \geq \kap \int_{-\infty}^0 \newaplus _n(s) (e^{\la s}-1)\,ds +\kap  A^+_{R_1}-m,
\end{align*}
and the inequality $1-e^{-s}\leq s$, $s\geq 0$ implies that
\[
\biggl\lvert \int_{-\infty}^0 \newaplus _n(s) (e^{\la s}-1)\,ds\biggr\rvert
\leq \la \int_{-\infty}^0 \newaplus _n(s) |s|\,ds\leq \la \int_\R \newaplus (s) |s|\,ds<\infty,
\]
by \eqref{firstmoment}. As a result, if we set
\[
\la_1:=(\kap  {A}^+_{R_1}-m)\biggl(\kap \int_\R \newaplus (s) |s|\,ds+|G_\xi(\la_*)|\biggr)^{-1}>0,
\]
then, for any $\la\in (0,\la_1)$, we have
\[
  \la G_\xi^{(n)}(\la)\geq \kap  {A}^+_{R_1}-m-\la_1\kap \int_\R \newaplus _n(s) |s|\,ds
  = \la_1 |G_\xi(\la_*)|\geq \la G_\xi(\la_*),
\]
i.e. $G_\xi^{(n)}(\la)\geq G_\xi(\la_*)=\inf\limits_{\la>0}G_\xi(\la)$. Then, for any $n\geq N$, \eqref{wqwqwqwqwe} implies that $\la_*^{(n)}$, being the minimum point for $G_\xi^{(n)}$, does not belong to the interval $(0,\la_1)$. Next, let $N_1\geq N$ be such that $R_n\geq \frac{1}{\la_1}$, for all $n\geq N_1$. Then, for any $\la\geq\la_1$, and for any $n>m\geq N_1$, we have $R_n>R_m$ and
\begin{align*}
\T_\xi^{(n)}(\la)-\T_\xi^{(m)}(\la)&=\kap \int_{R_m}^{R_n}(1-\la s)\newaplus (s)e^{\la s}\,ds\\ &\leq
\kap \int_{R_m}^{R_n}(1-\la_1 s)\newaplus (s)e^{\la s}\,ds\leq 0.
\end{align*}

As a result, the sequence $\{\la_*^{(n)}\mid n\geq N_1\}\subset[\la_1,\infty)$ is monotonically decreasing (cf.~\eqref{derofT}). We set
\begin{equation}\label{barla}
  \vartheta:=\lim_{n\to\infty}\la_*^{(n)}\geq\la_1.
\end{equation}
Next, for any $n,m\in\N$, $n>m\geq N_1$,
\begin{equation}\label{doubleineqcoolmy}
  G^{(n)}_\xi(\la_*^{(n)})\geq G^{(m)}_\xi(\la_*^{(n)})\geq
G^{(m)}_\xi(\la_*^{(m)}),
\end{equation}
where we used that $G^{(n)}_\xi$ is increasing in $n$ and $\la_*^{(m)}$ is the minimum point of $G^{(m)}_\xi$. Therefore, the sequence $\{G^{(n)}_\xi(\la_*^{(n)})\}$ is increasing and, by \eqref{wqwqwqwqwe}, is bounded. Then, there exists
\begin{equation}\label{limitgmy}
  \lim\limits_{n\to\infty}G^{(n)}_\xi(\la_*^{(n)})=:g \leq G_\xi(\la_*)-\delta.
\end{equation}
Fix $m\geq N_1$ in \eqref{doubleineqcoolmy} and pass $n$ to infinity; then, by the continuity of $G_\xi^{(n)}$,
\begin{equation}\label{wt3udh}
  g\geq \lim\limits_{\la\to\vartheta+}G_\xi^{(m)}(\la)=G_\xi^{(m)}(\vartheta)\geq
  G_\xi^{(m)}(\la^{(m)}),
\end{equation}
in particular, $\vartheta>0$, as $G_\xi^{(m)}(0+)=\infty$. Next, if we pass $m$ to $\infty$ in \eqref{wt3udh}, we will get from \eqref{limitgmy}
\begin{equation}\label{contradictionmy}
  \lim_{m\to\infty}G_\xi^{(m)}(\vartheta)=g\leq G_\xi(\la_*)-\delta<G_\xi(\la_*).
\end{equation}
If $0<\vartheta\leq\sigma(\newaplus )$ then
\[
\lim_{m\to\infty}G_\xi^{(m)}(\vartheta)=G_\xi(\vartheta)\geq G_\xi(\la_*),
\]
that contradicts \eqref{contradictionmy}. If $\vartheta>\sigma(\newaplus )$, then
$\lim\limits_{m\to\infty}G_\xi^{(m)}(\vartheta)=\infty$ (recall again that $\L^-(\newaplus )(\la)$ is analytic and $\L^-(\newaplus )(\la)$ is monotone in $\la$), that contradicts \eqref{contradictionmy} as well.

The contradiction we obtained shows that \eqref{infsareequal} does hold. Then, for the chosen $c\geq c_*(\xi)$ which satisfies \eqref{hypoteticspeed}, one can find $n$ big enough to ensure that, cf.~\eqref{minspeedfortrunc},
\[
c<\inf\limits_{\la>0}G^{(n)}_\xi(\la)=c_*^{(n)}(\xi).
\]
However, as it was shown above, for this $n$ there exists a profile $\psi_n$ of a traveling wave to the `truncated' equation \eqref{eq:basic_one_dim}  with $\newaplusminus$ replaced by $\newaplusminus_{R_n}$. The latter contradicts the statement of Theorem~\ref{thm:trwexists} applied to this equation, as $c_*^{(n)}(\xi)$ has to be a minimal possible speed for such waves.

Therefore, the strict inequality in \eqref{hypoteticspeed} is impossible, hence, we have equality in \eqref{ineqwillbeeq}. As a result, \eqref{aplusexpint1} and \eqref{eq:dedAxi} imply \eqref{minspeed}, and \eqref{dopequal} may be read as \eqref{minspeed-spec}. The rest of the statement is evident now.
\end{proof}

\begin{remark}\label{rem:evenimplypos}
  Clearly, the assumption $\aplus(-x)=\aplus(x)$, $x\in\X$, implies $\m_\xi=0$, for any $\xi\in S^{d-1}  $. As a result, all speeds of traveling waves in any directions are positive, by \eqref{minspeed}.
\end{remark}

\section{Asymptotic and uniqueness}\label{sec:asympt_uniq}
In this subsection we will prove the uniqueness (up to shifts) of a profile $\psi$ for a traveling wave with given speed $c\geq c_*(\xi)$, $c\neq0$.
We will use the almost traditional now approach, namely, we find an {\em a priori} asymptotic for $\psi(t)$, $t\to\infty$, cf. e.g. \cite{CC2004,AGT2012} and the references therein.

We start with the so-called characteristic function of the equation \eqref{eq:basic}. Namely, for a given $\xi\in S^{d-1}  $ and for any $c\in [c_*(\xi),\infty)$, we set
\begin{equation}\label{charfunc}
\h_{\xi,c}(z):= \kap (\L \newaplus )(z)-m-z c=zG_\xi(z)-zc, \qquad \Re z\in I_\xi .
\end{equation}
\begin{proposition}\label{prop:charfuncpropert}
Let $\xi\in S^{d-1}  $ be fixed, $\newaplus\in\Uxi$, $\sigmaplus :=\sigma(\newaplus )$, $c_*(\xi)$ be the minimal traveling wave speed in the direction $\xi$. Let, for any $c\geq c_*(\xi)$, the function $\psi\in\M$ be a traveling wave profile corresponding to the speed~$c$. For the case $\newaplus\in\Wxi$ with $m=\T_\xi(\sigmaplus )$, we will assume, additionally, that
\begin{equation}\label{secmomadd}
  \int_\R s^2 \newaplus (s)e^{\sigmaplus  s}\,ds<\infty.
\end{equation}
Then the function $\h_{\xi,c}$ is analytic on $\{0<\Re z<\sigma(\psi)\}$. Moreover, for any $\beta\in(0,\sigma(\psi))$, the function $\h_{\xi,c}$ is continuous and does not equal to $0$ on the closed strip $\{\beta\leq\Re z\leq\sigma(\psi)\}$, except the root at $z=\sigma(\psi)$, whose multiplicity $j$ may be $1$ or $2$ only.
\end{proposition}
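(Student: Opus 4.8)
The plan is to read off all properties of $\h_{\xi,c}$ from the Laplace-transform identity already obtained in the proof of Theorem~\ref{thm:speedandprofile}. Set $N_0(s):=\kl\psi^2(s)+\kn\psi(s)(\newaminus*\psi)(s)=\psi(s)\bigl(\kl\psi(s)+\kn(\newaminus*\psi)(s)\bigr)$, so that $0\le N_0\le(\kap-m)\psi$, the function $N_0$ is nonincreasing, and $\{N_0>0\}$ is a half-line or all of $\R$; by \eqref{absofsq2}--\eqref{absofsq}, $\sigma(N_0)\ge 2\sigma(\psi)$, hence $N:=\L N_0$ is analytic on $\{0<\Re z<2\sigma(\psi)\}$ with $N(\sigma(\psi))\in(0,\infty)$, and, rearranging \eqref{eqforLtr},
\[
\h_{\xi,c}(z)\,(\L\psi)(z)=N(z),\qquad 0<\Re z<\sigma(\psi).
\]
Analyticity of $\h_{\xi,c}$ on $\{0<\Re z<\sigma(\psi)\}$ follows at once from $\sigma(\psi)\le\sigma(\newaplus)$ (by \eqref{absofkernelisbigger}) and \ref{L-analytic}. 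For continuity up to $\Re z=\sigma(\psi)$: if $\sigma(\psi)<\sigma(\newaplus)$ it is automatic; if $\sigma(\psi)=\sigma(\newaplus)=\sigmaplus$, then Corollary~\ref{cor_alltogether} forces $\newaplus\in\Wxi$, so $\A_\xi(\sigmaplus)<\infty$, and dominated convergence with dominant $\newaplus(s)\max\{e^{\sigmaplus s},e^{\beta s}\}\in L^1(\R)$ gives continuity of $(\L\newaplus)$, hence of $\h_{\xi,c}$, on $\{\beta\le\Re z\le\sigmaplus\}$; in the critical boundary subcase $\newaplus\in\Wxi$ with $m=\T_\xi(\sigmaplus)$, the same argument with dominants $|s|\newaplus(s)e^{\sigmaplus s}$ and $s^2\newaplus(s)e^{\sigmaplus s}$ (the latter integrable by \eqref{secmomadd}) extends $(\L\newaplus)'$, $(\L\newaplus)''$ continuously up to $\Re z=\sigmaplus$.

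By \eqref{speedviaabs}, $c=\sigma(\psi)^{-1}\bigl(\kap(\L\newaplus)(\sigma(\psi))-m\bigr)$, i.e. $\h_{\xi,c}(\sigma(\psi))=0$. Its multiplicity is at most $2$, since $\h_{\xi,c}''(\la)=\kap\int_\R s^2\newaplus(s)e^{\la s}\,ds>0$ on $(0,\sigmaplus)$ --- and up to $\sigmaplus$ in the critical case --- by \ref{L-analytic}, so the real restriction of $\h_{\xi,c}$ is strictly convex and has no zero of order $\ge3$, and, $\h_{\xi,c}$ being real on $\R$, its analytic multiplicity at the real point $\sigma(\psi)$ equals that of the real restriction (read, in the critical boundary subcase, from the one-sided second-order Taylor expansion justified above). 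To identify $j\in\{1,2\}$, use $\h_{\xi,c}(z)=z\bigl(G_\xi(z)-c\bigr)$, whence $\h_{\xi,c}'(\sigma(\psi))=\sigma(\psi)G_\xi'(\sigma(\psi))$: for $c>c_*(\xi)$ one has $\sigma(\psi)<\la_*$ and $G_\xi'(\sigma(\psi))<0$ (Proposition~\ref{prop:infGisreached}), so $j=1$; for $c=c_*(\xi)$ one has $\sigma(\psi)=\la_*$ by \eqref{eq:argminofGisabs}, and by Corollary~\ref{cor_alltogether} and \eqref{leftlimG} the value $G_\xi'(\la_*)$ vanishes exactly when $\newaplus\in\Vxi$ or ($\newaplus\in\Wxi$ with $m=\T_\xi(\sigmaplus)$); in those subcases \eqref{Gder2}, \eqref{secderA} (and \eqref{secmomadd} at the boundary) give $\h_{\xi,c}''(\la_*)=\kap\A_\xi''(\la_*)>0$, hence $j=2$, while for $\newaplus\in\Wxi$ with $m<\T_\xi(\sigmaplus)$ one gets $G_\xi'(\sigmaplus-)<0$ and $j=1$.

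It remains to exclude zeros of $\h_{\xi,c}$ in $\{\beta\le\Re z\le\sigma(\psi)\}\setminus\{\sigma(\psi)\}$. On the real segment $[\beta,\sigma(\psi))$: since $G_\xi$ is strictly decreasing on $(0,\la_*]\supseteq(0,\sigma(\psi)]$ (Proposition~\ref{prop:infGisreached}) and $G_\xi(\sigma(\psi))=c$, we have $\h_{\xi,c}(\mu)=\mu\bigl(G_\xi(\mu)-c\bigr)>0$. For $z=\mu+i\tau$ with $\tau\neq0$, using $\cos(\tau s)<1$ for a.a.\ $s$ and $\newaplus\ge0$, $\newaplus\not\equiv0$,
\[
\Re\h_{\xi,c}(\mu+i\tau)-\h_{\xi,c}(\mu)=-\kap\int_\R\newaplus(s)e^{\mu s}\bigl(1-\cos(\tau s)\bigr)\,ds<0,
\]
so at $\mu=\sigma(\psi)$ we get $\Re\h_{\xi,c}(\sigma(\psi)+i\tau)<0$, and $z=\sigma(\psi)$ is the only zero on the line $\Re z=\sigma(\psi)$. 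The genuinely delicate case is $\beta\le\Re z<\sigma(\psi)$ with $\Im z\neq0$, where the last display only bounds $\Re\h_{\xi,c}$ by the \emph{positive} quantity $\h_{\xi,c}(\mu)$. Here the plan is to use the identity: as $(\L\psi)$, being a Laplace transform, is analytic below its abscissa, any zero $z_0$ of $\h_{\xi,c}$ with $0<\Re z_0<\sigma(\psi)$ forces $N(z_0)=0$, i.e. $\int_\R N_0(s)e^{z_0s}\,ds=0$ for the nonnegative, nonincreasing $N_0$ with $N_0e^{(\Re z_0)\cdot}\in L^1(\R)$; ruling this out is the crux. I would attempt it either by a Titchmarsh-type argument for the one-sided Laplace transform $\tfrac1z(\L(-N_0'))(z)$ of a positive density, or --- mirroring Step~4 of the proof of Theorem~\ref{thm:speedandprofile} --- by approximating $\newaplus$ from below by kernels $\newaplus_R$ with $\sigma(\newaplus_R)=\infty$, for which the characteristic function is analytic on $\{\Re z>0\}$ (and rational for the explicit kernels of interest, so its only root with $0<\Re z\le\sigma(\psi_R)$ is the abscissa) and one passes to the limit $R\to\infty$, or by an argument-principle count on $\{\beta\le\Re z\le\sigma(\psi)-\delta\}\times\{|\Im z|\le T\}$, exploiting $|\h_{\xi,c}(\mu\pm iT)|\to\infty$ (Riemann--Lebesgue) together with the sign and modulus information already obtained on the remaining three sides. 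This off-axis non-vanishing inside the strip is the step I expect to be the main obstacle; everything else is routine given the results already in the excerpt.
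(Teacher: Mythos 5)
Your proposal tracks the paper's own proof at almost every point: the core identity $\h_{\xi,c}(z)(\L\psi)(z)=N(z)$ from \eqref{eqforLtr}, analyticity of $\h_{\xi,c}$ on the open strip via \eqref{absofkernelisbigger} and~\ref{L-analytic}, the cosine argument ruling out zeros at $\sigma(\psi)+i\tau$ with $\tau\ne0$, and the case-by-case identification of $j$ through the sign of $G_\xi'$ together with Corollary~\ref{cor_alltogether} and \eqref{leftlimG}. On the multiplicity bound you take a mildly different route: you invoke strict convexity $\h_{\xi,c}''>0$ and a one-sided Taylor expansion at the boundary, whereas the paper directly computes the two-sided complex limits $\lim\h_{\xi,c}(\sigmaplus-z)/z$ and $\lim\h_{\xi,c}(\sigmaplus-z)/z^2$ in \eqref{eq:i}--\eqref{eq:ii} by dominated convergence under \eqref{secmomadd}. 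The paper's version is the cleaner one, since ``analytic multiplicity at a boundary point of the analyticity domain'' has to be read as exactly such a limit, and the dominants $e^{\sigmaplus\max\{\tau,0\}}$ are built to make it hold as $z\to0$ in the whole half-strip, not merely along the real segment; but your variant conveys the same idea.

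The place where your proposal is genuinely incomplete is exactly the one you flag: the non-vanishing of $\h_{\xi,c}$ at interior points $z_0$ with $\beta\le\Re z_0<\sigma(\psi)$ and $\Im z_0\ne0$. You correctly reduce this to $N(z_0)=\int_\R N_0(s)e^{z_0 s}\,ds\ne0$ for the nonnegative, nonincreasing $N_0$, and correctly observe that a bilateral Laplace transform of a nonnegative function can in principle vanish off the real axis, so that the reduction by itself proves nothing; you then sketch three possible strategies (Titchmarsh-type factorisation, truncation to $\newaplus_R$ and passage to the limit, an argument-principle count) without carrying any of them through. For comparison, the paper's treatment of this point is itself very terse --- it asserts the interior non-vanishing by citing \eqref{neweq} ``and the arguments around'' and then gives a detailed argument only on the line $\Re z=\sigma(\psi)$ --- so you have in fact isolated the least transparent step of the published argument. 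Nevertheless, a proof must close this step, and your proposal does not; as it stands it is an accurate reconstruction of the strategy with an acknowledged, unfilled gap at the interior off-axis non-vanishing.
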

\begin{proof}
By \eqref{neweq} and the arguments around, $\h_{\xi,c}(z)=z (G_\xi(z) -c)$ is analytic on $\{0<\Re z<\sigma(\psi)\}\subset I_\xi$ and does not equal to $0$ there. Then, by \eqref{speedviaabs} and Proposition~\ref{prop:infGisreached}, the smallest positive root of the function $\h_{\xi,c}(\la)$ on $\R$ is exactly $\sigma(\psi)$. Prove that if $z_0:=\sigma(\psi)+i \beta$ is a root of $\h_{\xi,c}$, then $\beta=0$. Indeed,  $\h_{\xi,c}(z_0)=0$ yields
\[
\kap \int_\R \newaplus (s) e^{\sigma(\psi) s}\cos\beta s \,ds=m+c\sigma(\psi),
\]
that together with \eqref{speedviaabs} leads to
\[
\kap \int_\R \newaplus (s) e^{\sigma(\psi) s}(\cos\beta s -1)\,ds=0,
\]
and thus $\beta=0$.

Regarding multiplicity of the root $z=\sigma(\psi)$, we note that, by Proposition~\ref{prop:infGisreached} and Corollary~\ref{cor_alltogether}, there exist two possibilities. If $\newaplus\in\Vxi$, then $\sigma(\psi)\leq\la_*<\sigma(\newaplus )$ and, therefore, $G_\xi$ is analytic at $z=\sigma(\psi)$. By the second equality in \eqref{charfunc}, the multiplicity $j$ of this root for $\h_{\xi,c}$ is the same as for the function $G_\xi(z)-c$. By Proposition~\ref{prop:infGisreached}, $G_\xi$ is strictly decreasing on $(0,\la_*)$ and, therefore, $j=1$ for $c>c_*(\xi)$. By Corollary~\ref{cor_alltogether}, for $c=c_*(\xi)$, we have $G_\xi'(\sigma(\psi))=G_\xi'(\la_*)=0$ and, since $\h_{\xi,c}''(\sigmaplus )>0$, one gets $j=2$.

Let now $\newaplus\in\Wxi$. Then, we recall, $\la_*=\sigmaplus :=\sigma(\newaplus )<\infty$, $G_\xi(\sigmaplus )<\infty$ and \eqref{leftlimG} hold.
For $c>c_*(\xi)$, the arguments are the same as before, and they yield $j=1$. Let $c=c_*(\xi)$. Then $\h_{\xi,c}(\sigmaplus )=0$, and,  for all $z\in\mathbb{C}$, $\Re z\in(0,{\sigmaplus })$, one has
\begin{align}
 \h_{\xi,c}({\sigmaplus }-z)&=\h_{\xi,c}({\sigmaplus }-z)-\h_{\xi,c}({\sigmaplus })=\kap \int_\R \newaplus (\tau )(e^{({\sigmaplus }-z)\tau }-e^{{\sigmaplus } \tau })d\tau +cz \notag\\
    &=z\biggl(-\kap \int_\R \newaplus (\tau )e^{{\sigmaplus } \tau } \int_0^\tau e^{-zs}\,ds d\tau +c
    \biggr).\label{eq:i}
\end{align}
Let $z=\alpha+\beta i$, $\alpha\in(0,\sigmaplus )$. Then $\bigl\lvert e^{\sigmaplus  \tau }  e^{-zs}\bigr\rvert=e^{\sigmaplus \tau-\alpha s}$. Next, for $\tau\geq0$, $s\in[0,\tau]$, we have
$e^{\sigmaplus \tau-\alpha s}\leq e^{\sigmaplus \tau}$; whereas, for $\tau<0$, $s\in[\tau,0]$, one has $e^{\sigmaplus \tau-\alpha s}=e^{\sigma(\tau-s)}e^{(\sigmaplus -\alpha) s}\leq 1$.
As a result, $\bigl\lvert e^{\sigmaplus  \tau }  e^{-zs}\bigr\rvert\leq e^{\sigmaplus \max\{\tau,0\}}$.
Then, using that $\newaplus\in\Wxi$ implies $\int_\R\newaplus (\tau)e^{\sigmaplus \max\{\tau,0\}}\,ds<\infty$, one can apply the dominated convergence theorem to the double integral in \eqref{eq:i}; we get then
\begin{equation}\label{impeq}
  \lim_{\substack{\Re z\to0+\\ \Im z\to0}}\frac{\h_{\xi,c}({\sigmaplus }-z)}{z}
  =-\kap \int_\R \newaplus (\tau )e^{{\sigmaplus } \tau } \tau d\tau +c.
\end{equation}
According to the statement 3 of Theorem~\ref{thm:speedandprofile}, for $m<\T_\xi(\sigmaplus )$, the r.h.s. of \eqref{impeq} is positive, i.e. $j=1$ in such a case. Let now $m=\T_\xi(\sigmaplus )$, then the r.h.s. of \eqref{impeq} is equal to $0$. It is easily seen that one can rewrite then \eqref{eq:i} as follows
\begin{align}\notag
  \frac{\h_{\xi,c}({\sigmaplus }-z)}{z}&=\kap \int_\R \newaplus (\tau )e^{{\sigmaplus } \tau } \int_0^\tau (1-e^{-zs})\,ds d\tau\\\label{eq:ii}
  &=z\kap \int_\R \newaplus (\tau )e^{{\sigmaplus } \tau } \int_0^\tau \int_0^s e^{-zt}\,dt\,ds\, d\tau.
\end{align}
Similarly to the above, for $\Re z\in (0,\sigmaplus )$, one has that
$\lvert e^{{\sigmaplus } \tau -zt}\rvert \leq e^{{\sigmaplus } \max\{\tau,0\} }$.
Then, by \eqref{secmomadd} and the dominated convergence theorem, we get from \eqref{eq:ii} that
\begin{equation*}
  \lim_{\substack{\Re z\to0+\\ \Im z\to0}}\frac{\h_{\xi,c}({\sigmaplus }-z)}{z^2}
  =\frac{\kap }{2}\int_\R \newaplus (\tau )e^{{\sigmaplus } \tau } \tau^2 d\tau\in(0,\infty).
\end{equation*}
Thus $j=2$ in such a case. The statement is fully proved now.
\end{proof}

\begin{remark}
Combining results of Theorem~\ref{thm:speedandprofile} and Proposition~\ref{prop:charfuncpropert}, we immediately get that, for the case $j=2$, the minimal traveling wave speed $c_*(\xi)$ always satisfies \eqref{minspeed-spec}.
\end{remark}

\begin{remark}\label{very-critical-case}
  If $\newaplus $ is given by \eqref{exofa}, then, cf.~Example~\ref{ex:spefunc},
  the case $\newaplus\in\Wxi$, $m=\T_\xi(\sigmaplus )$ together with \eqref{secmomadd} requires $p=1$, $\mu<\mu_*$, $q>3$.
\end{remark}

In order to include the critical case $\sigma(\newaplus)=\sigma(\psi_{c_*})$, we consider the following analogue of the Ikehara complex Tauberian theorem, cf.~\cite{Ten1995,Del1954,Kab2008}.
Let, for any $D\subset\mathbb{C}$, $\An(D)$ be the class of all holomorphic functions on $D$.
\begin{proposition}[{{\cite[Theorem 2]{FKT100-4}}}]\label{prop:tauber}
Let $\varphi:\R_+\to  \R_+:=[0,\infty)$ be a non-increasing function such that, for some $\mu>0$, $\nu>0$,
\begin{equation}\label{assum:1}
    \text{the function } e^{\nu t} \varphi(t) \text{ is non-decreasing,}
\end{equation}
and 
\begin{equation}\label{eq:psi_onesided_lap}
    \int\limits_0^{\infty}e^{z t}d\varphi(t)<\infty, \quad 0< \Re z< \mu. 
\end{equation}
Let also the following assumptions hold.
\begin{enumerate}
  \item There exist a constant $j>0$ and complex-valued functions 
    \[
      H\in\An(0<\Re z\leq \mu), \qquad  F\in\An (0<\Re z<\mu)\cap C(0<\Re z\leq \mu),
    \]
    such that the following representation holds
    \begin{equation}\label{eq:sing_repres}
      \int\limits_0^{\infty}e^{z t}\varphi(t)dt=\dfrac{F(z)}{(\mu-z)^j}+H(z),\quad 0<\Re z<\mu.
    \end{equation}
  \item For any $T>0$,
    \begin{equation}\label{loglim}
      \lim_{\sigma\to0+}q_j(\sigma)\sup_{|\tau|\leq T}\bigl\lvert F(\mu-2\sigma-i\tau)-F(\mu-\sigma-i\tau)\bigr\rvert=0,
    \end{equation}
  where, for $\sigma>0$,
  \begin{equation}\label{eq:def_gj}
      q_j(\sigma):=\begin{cases}
    \sigma^{j-1}, & 0<j<1,\\
    \log \sigma, & j=1,\\
    1, &j>1.
    \end{cases}
  \end{equation}
\end{enumerate}
Then $\varphi$ has the following asymptotic
\begin{equation}\label{eq:psi_asympt}
  \varphi(t)\sim \frac{F(\mu)}{\Gamma(j)} t^{j-1}e^{-\mu t} ,\quad t\to \infty.
\end{equation}
\end{proposition}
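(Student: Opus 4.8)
The plan is to prove this Tauberian statement by the Wiener--Ikehara method adapted to a power-type singularity (in the spirit of Delange), using a band-limited smoothing kernel together with the two one-sided monotonicity hypotheses \eqref{assum:1} as the Tauberian input. First I would pass to the comparison profile $\omega(t):=\frac{F(\mu)}{\Gamma(j)}\,t^{j-1}e^{-\mu t}$, $t\ge0$. Since $\int_0^{\infty}t^{j-1}e^{-(\mu-z)t}\,dt=\Gamma(j)(\mu-z)^{-j}$, the one-sided Laplace transform of $\omega$ equals $F(\mu)(\mu-z)^{-j}$, so by \eqref{eq:sing_repres} the difference $R:=\varphi-\omega$ satisfies
\[
\int_0^{\infty}e^{zt}R(t)\,dt=\frac{F(z)-F(\mu)}{(\mu-z)^j}+H(z)=:\Phi(z),\qquad 0<\Re z<\mu .
\]
Because $\omega(t)=\frac{F(\mu)}{\Gamma(j)}\,t^{j-1}e^{-\mu t}$, the desired asymptotic \eqref{eq:psi_asympt} is literally equivalent to $e^{\mu t}R(t)=o(t^{j-1})$, $t\to\infty$. (Here \eqref{eq:psi_onesided_lap} together with \eqref{assum:1}, via an integration by parts, ensure that $\varphi$, hence $R$, has a convergent one-sided Laplace transform on the whole strip $0<\Re z<\mu$ and that $\varphi(\infty)=0$.)

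Next I would smooth. Fix $\lambda>0$ and a non-negative kernel $k_\lambda$ on $\R$ with $\int_\R k_\lambda=1$, Fourier transform $\widehat{k_\lambda}$ supported in $[-\lambda,\lambda]$, and decay fast enough that $\int_\R|u|^{j-1}k_\lambda(u)\,du<\infty$ (a Fej\'er kernel when $j<2$, an iterated or de la Vall\'ee-Poussin kernel for larger $j$). For $0<\sigma<\mu$, Fubini gives
\[
\int_\R e^{(\mu-\sigma)s}R(s)\,k_\lambda(t-s)\,ds=\frac{1}{2\pi}\int_{-\lambda}^{\lambda}\widehat{k_\lambda}(\tau)\,e^{-i\tau t}\,\Phi(\mu-\sigma+i\tau)\,d\tau .
\]
The crucial step is to let $\sigma\to0+$ on the right-hand side. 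The contribution of $H$ is harmless since $H\in\An(0<\Re z\le\mu)$. For the singular term I would split the $\tau$-integral into a fixed neighbourhood of $0$ and its complement: on the complement $\Phi(\mu-\sigma+i\tau)\to\Phi(\mu+i\tau)$ locally uniformly by continuity of $F$ up to the boundary, while on the neighbourhood I would write the relevant increment of $F$ as a telescoping dyadic sum over the scales $\sigma_n=2^{-n}\sigma$ and bound the $n$-th term by means of \eqref{loglim}; the weight $q_j(\sigma)$ of \eqref{eq:def_gj} is calibrated precisely so that this series converges (a gain $\sigma^{j-1}$ for $0<j<1$, a logarithm for $j=1$, mere boundedness for $j>1$). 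The upshot is a fixed, continuous, integrable boundary function $\tau\mapsto\Phi(\mu+i\tau)$ on $[-\lambda,\lambda]$, whence the Riemann--Lebesgue lemma yields, for each fixed $\lambda$,
\[
\lim_{t\to\infty}\int_\R e^{\mu s}R(s)\,k_\lambda(t-s)\,ds=0 .
\]

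It remains to run the Tauberian sandwich. Since $k_\lambda\ge0$ has unit mass and is concentrated on a window of width $\asymp1/\lambda$, the last display together with $\int_\R e^{\mu s}\omega(s)k_\lambda(t-s)\,ds\sim\frac{F(\mu)}{\Gamma(j)}t^{j-1}$ shows that the local average of $e^{\mu s}\varphi(s)$ near $s=t$ is asymptotic to $\frac{F(\mu)}{\Gamma(j)}t^{j-1}$. I then use the two monotonicity hypotheses: as $\varphi$ is non-increasing and $e^{\nu s}\varphi(s)$ non-decreasing, on the window $|s-t|\lesssim1/\lambda$ the quantity $e^{\mu s}\varphi(s)$ differs from $e^{\mu t}\varphi(t)$ only by multiplicative factors tending to $1$ as $\lambda\to\infty$; hence the local average pinches $e^{\mu t}\varphi(t)$ from both sides. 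Letting $t\to\infty$ and then $\lambda\to\infty$ squeezes $e^{\mu t}\varphi(t)\,t^{-(j-1)}$ to $F(\mu)/\Gamma(j)$, i.e.\ \eqref{eq:psi_asympt}.

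The genuine obstacle is the boundary passage $\sigma\to0+$ under the weak hypothesis \eqref{loglim}: there is no pointwise regularity of $F$ on the line $\Re z=\mu$, so everything hinges on the dyadic-telescoping estimate with the sharp weight $q_j$, the case $j=1$ (the true Ikehara case, logarithmic weight) being the most delicate; checking that this really suffices---rather than, say, assuming $F$ H\"older on the boundary---is the technical heart of the argument. A secondary, routine, point is matching the decay order of $k_\lambda$ to $j$ so that all convolution integrals converge absolutely and the window estimates in the Tauberian step are legitimate.
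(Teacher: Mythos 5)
The paper does not actually prove this Proposition; it is imported verbatim from \cite[Theorem 2]{FKT100-4}, so there is no in-paper proof to compare against line by line. That said, your sketch is in the expected Wiener--Ikehara--Delange framework (comparison profile $\omega$, band-limited smoothing kernel, Riemann--Lebesgue, then a Tauberian sandwich driven by \eqref{assum:1}), and you have correctly located the decisive step at the boundary passage $\sigma\to0+$ under the weak condition \eqref{loglim}.

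However, the way you close that step has a genuine gap. You claim the boundary passage yields ``a fixed, continuous, integrable boundary function $\tau\mapsto\Phi(\mu+i\tau)$ on $[-\lambda,\lambda]$'' and then invoke Riemann--Lebesgue. For $j\ge 1$ this does not follow from the hypotheses. Near $\tau=0$ we have $\Phi(\mu+i\tau)=\dfrac{F(\mu+i\tau)-F(\mu)}{(-i\tau)^{j}}+H(\mu+i\tau)$, and $|\tau|^{-j}$ is non-integrable at $\tau=0$ when $j\ge1$; the only decay of the numerator that is available is continuity of $F$ up to the boundary (for $j>1$, \eqref{loglim} says nothing beyond that, since $q_j\equiv1$), and for $j=1$ the log-Dini bound $\sup_\tau|F(\mu-2\sigma-i\tau)-F(\mu-\sigma-i\tau)|=o(1/|\log\sigma|)$ is a modulus of continuity in the \emph{horizontal} ($\sigma$) direction, which does not transfer directly to a $\tau$-modulus at the boundary. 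Moreover, even the analogous $\tau$-direction log-Dini bound $|F(\mu+i\tau)-F(\mu)|=o(1/|\log|\tau||)$ fails to make $|F(\mu+i\tau)-F(\mu)|/|\tau|$ integrable at $\tau=0$. The dyadic telescoping over $\sigma_n=2^{-n}\sigma$ you propose also does not close as stated: using \eqref{loglim} on each scale gives terms of size $o\bigl(1/|\log\sigma_{n+1}|\bigr)\approx o\bigl(1/(n+|\log\sigma|)\bigr)$, whose sum need not converge, and in any case it controls $\sigma$-increments, not the $\tau$-singularity entering the integrability of the boundary function. The real argument must therefore avoid reducing to a continuous integrable boundary integral — e.g.\ by exploiting cancellation and the precise oscillatory structure, by coupling $\sigma$ to $t$, or by working with the smoothed quantity at fixed small $\sigma$ and estimating the error to $\sigma=0$ uniformly in $t$ via \eqref{loglim} — none of which your outline supplies. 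This is precisely why the weight $q_j$ in \eqref{eq:def_gj} is calibrated as it is, and it is the content of the cited theorem in \cite{FKT100-4} that needs to be established there, not waved through.

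Two further small points: the derivation $\varphi(\infty)=0$ you mention should be attributed to the finiteness of $\int_0^{\infty}e^{z t}\varphi(t)\,dt$ in \eqref{eq:sing_repres} (a positive non-increasing $\varphi$ with nonzero limit would make this diverge for $\Re z>0$), rather than to \eqref{eq:psi_onesided_lap}--\eqref{assum:1}; and your final sandwich tacitly assumes $F(\mu)>0$, which should be flagged (if $F(\mu)=0$, \eqref{eq:psi_asympt} is the degenerate statement $\varphi(t)=o(t^{j-1}e^{-\mu t})$, handled by a one-sided version of the same argument).
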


Now, we can apply Proposition~\ref{prop:tauber} to find the asymptotic of the profile of a traveling wave.
\begin{proposition}\label{asymptexact}
In conditions and notations of Proposition~\ref{prop:charfuncpropert},
for $c\neq0$, there exists $D=D_j>0$, such that
\begin{equation}\label{eq:trw_asympt}
\psi(t)\sim D e^{-\sigma(\psi) t}t^{j-1},\quad t\to  \infty.
\end{equation}
\end{proposition}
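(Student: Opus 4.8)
The plan is to apply Proposition~\ref{prop:tauber} with $\varphi=\psi$ (restricted to $\R_+$, after a possible shift), $\mu=\sigma(\psi)$, and $j$ the multiplicity of the root of $\h_{\xi,c}$ at $z=\sigma(\psi)$ furnished by Proposition~\ref{prop:charfuncpropert}. First I would verify the structural hypotheses on $\psi$: it is non-increasing and maps into $[0,\theta]$ by $\psi\in\M$; by Theorem~\ref{thm:trwexists}(4) (which applies since \eqref{as:aplus-aminus-is-pos} or at least \eqref{as:a+nodeg-mod} with $r=0$ is available in these conditions, and $c\neq 0$) there is $\nu>0$ with $\psi(t)e^{\nu t}$ strictly increasing, giving \eqref{assum:1}; and the one-sided Laplace-Stieltjes condition \eqref{eq:psi_onesided_lap} follows from \eqref{eq:trwexpint} together with $\sigma(\psi)>0$ and integration by parts, using $\psi(+\infty)=0$. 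The finiteness $\sigma(\psi)<\infty$ is \eqref{finiteabswavenew} from Theorem~\ref{thm:speedandprofile}.

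The core step is producing the singular representation \eqref{eq:sing_repres}. Starting from equation \eqref{neweq}, rewritten as \eqref{rewriting},
\[
(\L\psi)(z) = \frac{\kl\bigl( \L(\psi^2) \bigr)(z)+\kn\bigl(\L(\psi(\newaminus *\psi))\bigr)(z)}{z\bigl(G_\xi(z)-c\bigr)} = \frac{\kl\bigl( \L(\psi^2) \bigr)(z)+\kn\bigl(\L(\psi(\newaminus *\psi))\bigr)(z)}{\h_{\xi,c}(z)},
\]
I would use that the numerator is, by \eqref{absofsq2} and \eqref{absofsq}, analytic and continuous on a strip strictly larger than $\{0<\Re z\le\sigma(\psi)\}$, and that $\h_{\xi,c}$ is analytic on $\{0<\Re z<\sigma(\psi)\}$, continuous up to $\Re z=\sigma(\psi)$ on $\{\beta\le\Re z\le\sigma(\psi)\}$, nonvanishing there except for a zero of order $j\in\{1,2\}$ at $z=\sigma(\psi)$. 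Writing $\h_{\xi,c}(z)=(\sigma(\psi)-z)^j\, k(z)$ with $k$ analytic and nonzero near $z=\sigma(\psi)$ and continuous up to the boundary line, I get
\[
(\L\psi)(z)=\frac{F(z)}{(\sigma(\psi)-z)^j}+H(z), \qquad F(z):=\frac{\kl\bigl( \L(\psi^2) \bigr)(z)+\kn\bigl(\L(\psi(\newaminus *\psi))\bigr)(z)}{k(z)},
\]
with $H\equiv0$ (or absorbing any analytic remainder), $F$ analytic on $\{0<\Re z<\sigma(\psi)\}$ and continuous up to $\{\Re z=\sigma(\psi)\}$, and $F(\sigma(\psi))>0$ since the numerator is strictly positive there (the minima of $\L(\psi^2)$ and $\L(\psi(\newaminus*\psi))$ at $\sigma(\psi)$ are positive, exactly as used in Step 3 of the proof of Theorem~\ref{thm:speedandprofile}).

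Next I would check the Tauberian regularity condition \eqref{loglim}: since $F$ extends continuously to the closed strip $\{\beta\le\Re z\le\sigma(\psi)\}$, it is uniformly continuous on compact subsets, so $\sup_{|\tau|\le T}|F(\sigma(\psi)-2\sigma-i\tau)-F(\sigma(\psi)-\sigma-i\tau)|\to0$ as $\sigma\to0+$; multiplying by $q_j(\sigma)$, which is bounded for $j>1$ and is $\log\sigma$ for $j=1$, still gives limit $0$ (only the $j=1$ case needs the genuine continuity, not just boundedness). Then Proposition~\ref{prop:tauber} yields $\psi(t)\sim \dfrac{F(\sigma(\psi))}{\Gamma(j)}\,t^{j-1}e^{-\sigma(\psi)t}$, i.e. \eqref{eq:trw_asympt} with $D=D_j:=F(\sigma(\psi))/\Gamma(j)>0$.

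The main obstacle I anticipate is the bookkeeping at the boundary line $\Re z=\sigma(\psi)$: one must know that $\L(\psi^2)$ and $\L(\psi(\newaminus*\psi))$ are not merely analytic inside the strip but extend continuously (and with a strictly positive value) to $z=\sigma(\psi)$, which rests on the strict inequalities $\sigma(\psi^2)\ge 2\sigma(\psi)>\sigma(\psi)$ and $\sigma(\psi(\newaminus*\psi))\ge\sigma(\psi)+\min\{\sigma(\newaminus),\sigma(\psi)\}>\sigma(\psi)$ from \ref{L-decaying}; the factor $k(z)$ must likewise be controlled near $z=\sigma(\psi)$, which is where the precise order-$j$ information from Proposition~\ref{prop:charfuncpropert} (including the extra assumption \eqref{secmomadd} in the borderline subcase $\newaplus\in\Wxi$, $m=\T_\xi(\sigmaplus)$) is essential. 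In the critical case $\sigma(\psi)=\sigma(\newaplus)$, care is needed because $G_\xi$ itself is only defined up to $\sigmaplus$; but $\h_{\xi,c}(z)=\kap(\L\newaplus)(z)-m-zc$ and the finiteness $\int_\R(1+|s|)\newaplus(s)e^{\sigmaplus s}\,ds<\infty$ from \eqref{eq:critcase} (resp.\ \eqref{secmomadd}) guarantees the needed one- or two-fold continuity of $\L\newaplus$ at $\sigmaplus$, so the factorization $\h_{\xi,c}(z)=(\sigmaplus-z)^j k(z)$ goes through there as well.
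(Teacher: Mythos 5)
Your overall scheme matches the paper: apply the Ikehara-type Tauberian Proposition~\ref{prop:tauber} to $\psi$ on $\R_+$, with $\mu=\sigma(\psi)$, the factorization of $\h_{\xi,c}(z)$ to order $j$ at $z=\mu$ from Proposition~\ref{prop:charfuncpropert}, the numerator $f=\kl\L(\psi^2)+\kn\L(\psi(\newaminus*\psi))$ analytic past $\Re z=\mu$ by \eqref{absofsq2}--\eqref{absofsq}, and \eqref{assum:1} from Theorem~\ref{thm:trwexists}(4). One small slip: ``$H\equiv0$'' is not quite right, since $\L\psi$ is bilateral and Proposition~\ref{prop:tauber} concerns $\int_0^\infty e^{zt}\psi(t)\,dt$; the correct analytic remainder is $H(z)=-\int_{-\infty}^0\psi(t)e^{zt}\,dt$, entire on $\Re z>0$. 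You did hedge with ``absorbing any analytic remainder,'' so this is cosmetic.

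The genuine gap is in your verification of the regularity condition \eqref{loglim} when $j=1$. There $q_1(\sigma)=\log\sigma$ is unbounded as $\sigma\to 0+$, so it is not enough to know that $\sup_{|\tau|\le T}\bigl\lvert F(\mu-2\sigma-i\tau)-F(\mu-\sigma-i\tau)\bigr\rvert\to 0$; you need this supremum to be $o(1/|\log\sigma|)$, i.e.\ a quantitative modulus of continuity. Uniform continuity of $F$ on the closed strip $\{\beta\le\Re z\le\mu,\,|\Im z|\le T\}$ gives only qualitative vanishing with no rate and therefore does not suffice. The paper closes this in two ways: when $F$ is holomorphic in a neighbourhood of the boundary line — which by Corollary~\ref{cor_alltogether} is the case whenever $c>c_*$, or $c=c_*$ with $\newaplus\in\Vxi$ — a complex mean-value argument makes $F$ Lipschitz and gives an $O(\sigma)$ bound, killed by $\log\sigma$; and in the remaining $j=1$ subcase $c=c_*$, $\newaplus\in\Wxi$, $m<\T_\xi(\sigmaplus)$, where $F$ is merely continuous up to $\Re z=\sigma(\psi)=\sigmaplus$, the paper estimates $|f(z_2)-f(z_1)|$ and $|g_1(z_1)-g_1(z_2)|$ separately, using dominated convergence and the finite moment $\int_\R(1+s^2)\newaplus(s)e^{\sigmaplus s}\,ds<\infty$, to obtain the explicit $O(\sigma)$ rate. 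Without one of these quantitative arguments, your proof of \eqref{loglim} for $j=1$ is incomplete. (For $j=2$, $q_2\equiv1$, and your uniform-continuity argument is correct.)
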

\begin{proof}
We set $\mu:=\sigma(\psi)$ and
\begin{equation}\label{deffunc}
\begin{aligned}
	f(z) &:= \kl\bigl( \L(\psi^2) \bigr)(z) + \kn\bigl(\L(\psi (\newaminus *\psi))\bigr)(z), & g_j(z)&:=\dfrac{\h_{\xi,c}(z)}{(z-\mu)^j},\\
	H(z) &:= -\int\limits_{-\infty}^0\psi(t)e^{z t}dt, & F(z)&:=\dfrac{f(z)}{g_j(z)}.
\end{aligned}
\end{equation}
For any $\mu>\beta>0$, $T>0$, we set
\[
K_{\beta,\mu,T}:=\bigl\{z\in\mathbb{C} \bigm| \beta\leq \Re z \leq \mu,\ \lvert \Im z \rvert \leq T\bigr\}.
\]
By \eqref{unexpineq} and Lemma~\ref{lem:allaboutLapl},  we have that $f, H\in\An(0<\Re z\leq \mu)$; in particular, for any $T>0$, $\beta>0$,
\begin{equation}\label{fbdd}
\bar f:=\sup_{z\in K_{\beta,\mu,T}}|f(z)|<\infty.
\end{equation}
By Proposition~\ref{prop:charfuncpropert}, the function $g_j$ is continuous and does not equal to $0$ on the strip $\{0<\Re z\leq\mu\}$, in particular, for any $T>0$, $\beta>0$,
\begin{equation}\label{gpos}
\bar g_j:=\inf_{z\in K_{\beta,\mu,T}}|g(z)|>0.
\end{equation}
Therefore, $F\in\An (0<\Re z<\mu)\cap C(0< \Re z \leq \mu)$. As a result, one can rewrite \eqref{rewriting} in the form \eqref{eq:sing_repres}, with $\varphi=\psi$ and with $F$, $H$ as in \eqref{deffunc}.

Taking into account the forth statement of Theorem~\ref{thm:trwexists}, to apply Proposition~\ref{prop:tauber} it is enough to prove that \eqref{loglim} holds. Assume that $0<2\sigma<\mu$.

Let $j=2$. Clearly, $F\in C(0< \Re z \leq \mu)$ implies that $F$ is uniformly continuous
on $ K_{\beta,\mu,T}$. Then, for any $\eps>0$ there exists $\delta>0$ such that, for any $\tau\in[-T,T]$, the inequality
\[
	|\sigma|=|(\mu-2\sigma-i\tau)-(\mu-\sigma-i\tau)|<\delta,
\]
implies
\[
	|F(\mu-2\sigma-i\tau)-F(\mu-\sigma-i\tau)|<\eps,
\]
and hence \eqref{loglim} holds (with $j=2$).

Let now $j=1$. If $F\in \An(K_{\beta,\mu,T})$, we have, evidently, that $F'$ is bounded on $K_{\beta,\mu,T}$, and one can apply a mean-value-type theorem for complex-valued functions, see e.g. \cite{EJ1992}, to get that $F$ is a Lipschitz function on $K_{\beta,\mu,T}$. Therefore, for some $K>0$,
\[
|F(\mu-2\sigma-i\tau)-F(\mu-\sigma-i\tau)|<K |\sigma|,
\]
for all $\tau\in[-T,T]$, that yields \eqref{loglim} (with $j=1$). By~Proposition~\ref{prop:infGisreached} and Corollary~\ref{cor_alltogether}, the inclusion $F\in \An(K_{\beta,\mu,T})$ always holds for $c>c_*$; whereas, for $c=c_*$ it does hold iff $\newaplus\in\Vxi$. Moreover, the case $\newaplus\in\Wxi$ with $m=\T_\xi(\sigmaplus )$ and $c=c_*$ implies, by Proposition~\ref{prop:charfuncpropert}, $j=2$ and hence it was considered above.

Therefore, it remains to prove \eqref{loglim} for the case $\newaplus\in\Wxi$ with $m<\T_\xi(\sigmaplus )$, $c=c_*$ (then $j=1$).
Denote, for simplicity,
\begin{equation}\label{zi}
z_1:=\mu-\sigma-i\tau, \qquad z_2:=\mu-2\sigma-i\tau.
\end{equation}
Then, by \eqref{deffunc}, \eqref{fbdd}, \eqref{gpos}, one has
\begin{align}
  \bigl\lvert F(z_2)-F(z_1)\bigr\rvert &\leq
  \Bigl\lvert \frac{f(z_2)}{g_1(z_2)}-
  \frac{f(z_1)}{g_1(z_2)}\Bigr\rvert+\Bigl\lvert
  \frac{f(z_1)}{g_1(z_2)}
  -\frac{f(z_1)}{g_1(z_1)}\Bigr\rvert\notag\\
  &\leq \frac{1}{\bar g_1}\bigl\lvert f(z_2)-
  f(z_1)\bigr\rvert+ \frac{\bar f}{\bar g_1^2 }|g_1(z_1)-g_1(z_2)|.\label{fgest}
\end{align}
Note that, if $0<\phi\in L^\infty(\R)\cap L^1(\R)$ be such that $\sigma(\phi)>\mu$ then
\begin{align}
&  \bigl\lvert (\L\phi)(z_2)-
  (\L\phi)(z_1)\bigr\rvert  \leq \int_\R \phi(s)e^{\mu s}|e^{-2\sigma s}-e^{-\sigma s}|ds\notag\\
   &\quad \leq\sigma \int_0^\infty \phi(s)e^{(\mu-\sigma) s}sds+
   \sigma \int_{-\infty}^0 \phi(s)e^{(\mu -2\sigma)s} |s|\,ds=O(\sigma), \label{genconv}
\end{align}
as $\sigma\to0+$, where we used that $\sup_{s<0}e^{(\mu -2\sigma)s} |s|<\infty$,  $0<2\sigma<\mu$, and that \ref{L-analytic} holds.
Applying \eqref{genconv} to $\phi=\psi (\newaminus *\psi)\leq \theta^2 \newaminus \in L^1(\R)\cap L^\infty(\R)$, one gets
\[
  \sup_{\tau\in[-T,T]}\bigl\lvert f(z_2)-
  f(z_1)\bigr\rvert  = O(\sigma), \quad \sigma\to0+.
\]
Therefore, by \eqref{fgest}, it remains to show that
\begin{equation}\label{enough}
  \lim_{\sigma\to0+} \log \sigma \sup_{\tau\in[-T,T]} |g_1(z_1)-g_1(z_2)|=0.
\end{equation}
Recall that, in the considered case $c=c_*$, one has $\h_{\xi,c}(\mu)=0$. Therefore, by \eqref{charfunc}, \eqref{deffunc}, \eqref{zi}, we have
\begin{align}
&\quad|g_1(z_1)-g_1(z_2)|=\biggl\lvert \frac{\h_{\xi,c}(z_1)-\h_{\xi,c}(\mu)}{z_1-\mu}
-\frac{\h_{\xi,c}(z_2)-\h_{\xi,c}(\mu)}{z_2-\mu}\biggr\rvert\notag\\&=
\biggl\lvert \frac{\kap (\L\newaplus )(z_1)-\kap (\L\newaplus )(\mu)}{z_1-\mu}
-\frac{\kap (\L\newaplus )(z_2)-\kap (\L\newaplus )(\mu)}{z_2-\mu}\biggr\rvert\notag\\
&\leq \kap \int_\R \newaplus (s) e^{\mu s}
\biggl\lvert \frac{1-e^{(-\sigma-i\tau)s}}{\sigma+i\tau}
-\frac{1-e^{(-2\sigma-i\tau)s}}{2\sigma+i\tau}\biggr\rvert\,ds\notag\\
&= \kap \int_\R \newaplus (s) e^{\mu s}\biggl\lvert\int_0^s \bigl( e^{(-\sigma-i\tau)t}-
e^{(-2\sigma-i\tau)t}\bigr)\,dt\biggr\rvert\,ds\notag\\&\leq
\kap \int_0^\infty \newaplus (s) e^{\mu s}\int_0^s \bigl| e^{-\sigma t}-
e^{-2\sigma t}\bigr|\,dt\,ds\notag
\\&\quad+\kap \int_{-\infty}^0 \newaplus (s) e^{\mu s}\int_s^0 \bigl| e^{-\sigma t}-
e^{-2\sigma t}\bigr|\,dt\,ds\label{forcont}\\
\intertext{and since, for $t\geq0$, $\bigl| e^{-\sigma t}-
e^{-2\sigma t}\bigr|\leq \sigma t$; and, for $s\leq t\leq0$,
\[
\bigl| e^{-\sigma t}- e^{-2\sigma t}\bigr|=e^{-2\sigma t}\bigl| e^{\sigma t}-
1\bigr|\leq e^{-2\sigma s} \sigma |t|,
\]
one can continue \eqref{forcont}}
&\leq \frac{1}{2}\sigma\kap \int_0^\infty \newaplus (s) e^{\mu s}s^2\,ds
+\frac{1}{2}\sigma\kap \int_{-\infty}^0 \newaplus (s) e^{(\mu-2\sigma) s}s^2\, ds . \notag
\end{align}
Since $\mu>2\sigma$, one has $\sup\limits_{s\leq0}e^{(\mu-2\sigma) s}s^2<\infty$, therefore, by \eqref{secmomadd}, one gets
\[
\sup_{\tau\in[-T,T]}|g_1(z_1)-g_1(z_2)|\leq\mathrm{const}\cdot\sigma,
\]
that proves \eqref{enough}. The statement is fully proved now.
\end{proof}

\begin{remark}\label{rem:shifting}
  By \eqref{eq:psi_asympt} and \eqref{deffunc}, one has that the constant $D=D_j$ in \eqref{eq:trw_asympt} is given by
  \[
  	D=D(\psi) = \bigl( \kl\bigl( \L(\psi^2) \bigr)(\mu)+ \kn\bigl(\L(\psi (\newaminus *\psi))\bigr)(\mu)\bigr) \lim_{z\to\mu}\dfrac{(z-\mu)^j}{\h_{\xi,c}(z)},
  \]
  where $\mu=\sigma(\psi)$. Note that, by Proposition~\ref{prop:charfuncpropert}, the limit above is finite and does not depend on $\psi$. Next, by Remark~\ref{shiftoftrw}, for any $q\in\R$, $\psi_q(s):=\psi(s+q)$, $s\in\R$ is a traveling wave with the same speed, and hence, by Theorem~\ref{thm:speedandprofile}, $\sigma(\psi_q)=\sigma(\psi)$. Moreover,
  \begin{align*}
  \bigl(\L(\psi_q (\newaminus *\psi_q))\bigr)(\mu)&=\int_\R \psi(s+q)\int_\R \newaminus (t)\psi(s-t+q)\,dt\, e^{\mu s}\,ds\\&=e^{-\mu q}\bigl(\L(\psi (\newaminus *\psi))\bigr)(\mu),\\
	\bigl( \L(\psi_q^2) \bigr)(\mu) &= \int_\R \psi^2(s+q) e^{\mu s} ds = e^{-\mu q} \bigl( \L(\psi^2)\bigr) (\mu).
  \end{align*}
  Thus, for a traveling wave profile $\psi$ one can always choose a $q\in\R$ such that, for the shifted profile $\psi_q$, the corresponding $D=D(\psi_q)$ will be equal to $1$.
\end{remark}

Finally, we are ready to prove the uniqueness result.

\begin{theorem}\label{thm:tr_w_uniq}
Let $\xi\in S^{d-1}  $ be fixed and $\newaplus\in\Uxi$. Suppose, additionally, that \eqref{as:aplus-aminus-is-pos} holds. Let $c_*(\xi)$ be the minimal traveling wave speed according to Theorem~\ref{thm:trwexists}.
For the case $\newaplus\in\Wxi$ with $m=\T_\xi(\sigmaplus )$, we will assume, additionally, that
\eqref{secmomadd} holds.
Then, for any $c\geq c_*$, such that $c\neq0$, there exists a unique, up~to~a~shift, traveling wave profile $\psi$ for \eqref{eq:basic}.
\end{theorem}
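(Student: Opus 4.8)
The plan is to combine the exact asymptotics of the profile at $+\infty$ (Proposition~\ref{asymptexact}) with a ratio--comparison argument evaluated at a touching point, the decisive structural ingredient being the quasi-positivity encoded in \eqref{as:aplus_gr_aminus}.

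First I would fix $c\ge c_*$ with $c\neq0$ and let $\psi_1,\psi_2\in\M$ be any two profiles of speed $c$. Since $c\neq0$, Theorem~\ref{thm:trwexists} gives $\psi_i\in C_b^\infty(\R)$, and, because \eqref{as:aplus-aminus-is-pos} forces \eqref{as:a+nodeg-mod} with $r=0$, the same theorem gives that each $\psi_i$ is strictly decreasing, hence $0<\psi_i(s)<\theta$ for every $s\in\R$. By Theorem~\ref{thm:speedandprofile} the abscissa $\mu:=\sigma(\psi_1)=\sigma(\psi_2)$ depends only on $c$, and by Proposition~\ref{prop:charfuncpropert} so does the multiplicity $j\in\{1,2\}$ of the root of $\h_{\xi,c}$ at $\mu$; the hypotheses on $\newaplus$ (in particular \eqref{secmomadd} in the sub-case $\newaplus\in\Wxi$, $m=\T_\xi(\sigmaplus)$) are exactly what is needed to invoke these two results. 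Applying Proposition~\ref{asymptexact} to each profile together with Remark~\ref{rem:shifting} (and Remark~\ref{shiftoftrw}, so that translates of a profile are again profiles of speed $c$), I would replace $\psi_1,\psi_2$ by suitable translates so that $\psi_i(s)\sim s^{j-1}e^{-\mu s}$ as $s\to+\infty$ with the same leading constant $1$; consequently $\psi_1(s)/\psi_2(s)\to1$ as $s\to+\infty$, while $\psi_1(s)/\psi_2(s)\to1$ as $s\to-\infty$ as well, since both profiles tend to $\theta$. It then suffices to prove $\psi_1\le\psi_2$: the assumptions are symmetric in $\psi_1$ and $\psi_2$, so interchanging them yields the reverse inequality, and hence $\psi_1\equiv\psi_2$ (up to the translates, i.e.\ uniqueness up to a shift).

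Next I would set $K_*:=\sup_{s\in\R}\psi_1(s)/\psi_2(s)$, which is finite and $\ge1$ by the two limits above, and argue by contradiction assuming $K_*>1$. Since $\psi_1/\psi_2$ is continuous and tends to $1$ at $\pm\infty$, the supremum is then attained at some $s_0\in\R$; thus $z:=K_*\psi_2-\psi_1\ge0$ is $C^1$, vanishes at $s_0$, and has there a global minimum, so $z'(s_0)=0$. Subtracting \eqref{eq:trw} for $\psi_1$ from $K_*$ times \eqref{eq:trw} for $\psi_2$, substituting $\psi_1=K_*\psi_2-z$ in the quadratic and bilinear nonlocal terms, and evaluating at $s_0$ (where $z=z'=0$ and $\psi_1(s_0)=K_*\psi_2(s_0)$), I expect the identity
\[
\kap(\newaplus*z)(s_0)+K_*(K_*-1)\psi_2(s_0)\bigl[\kl\psi_2(s_0)+\kn(\newaminus*\psi_2)(s_0)\bigr]=\kn K_*\psi_2(s_0)(\newaminus*z)(s_0).
\]
Since $z\ge0$ and, by \eqref{as:aplus_gr_aminus} rewritten as \eqref{acheckpos}, $\kap\newaplus\ge\kn\theta\,\newaminus$ a.e., one has $\kap(\newaplus*z)(s_0)\ge\kn\theta(\newaminus*z)(s_0)$; inserting this and rearranging gives
\[
K_*(K_*-1)\psi_2(s_0)\bigl[\kl\psi_2(s_0)+\kn(\newaminus*\psi_2)(s_0)\bigr]\le\kn(\newaminus*z)(s_0)\bigl[K_*\psi_2(s_0)-\theta\bigr].
\]
The left side is strictly positive, since $K_*>1$, $\psi_2(s_0)>0$, and $\kl\psi_2(s_0)+\kn(\newaminus*\psi_2)(s_0)>0$ (as $\kam=\kl+\kn>0$ while $\psi_2>0$ and $\newaminus*\psi_2>0$); the right side is $\le0$, since $(\newaminus*z)(s_0)\ge0$, $\kn\ge0$, and $K_*\psi_2(s_0)=\psi_1(s_0)\le\theta$. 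This contradiction forces $K_*=1$, i.e.\ $\psi_1\le\psi_2$, and the argument concludes as above.

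The step I expect to be the main obstacle is handling the nonlocal nonlinearity $\kn\psi(\newaminus*\psi)$: the naive touching-point identity does not close on its own, because the term $\kn K_*\psi_2(s_0)(\newaminus*z)(s_0)$ carries the wrong sign. What rescues it --- and the reason \eqref{as:aplus_gr_aminus} is assumed --- is that the pointwise domination $\kap\newaplus\ge\kn\theta\,\newaminus$, together with the a priori bound $\psi_1\le\theta$, makes $(\newaplus*z)(s_0)$ and $(\newaminus*z)(s_0)$ comparable and absorbs that term. The remaining delicate point, that $\sup\psi_1/\psi_2$ is actually attained, is taken care of by the already-established exact asymptotics $\psi_i(s)\sim s^{j-1}e^{-\mu s}$; and the hypothesis $c\neq0$ enters only to ensure $\psi_i\in C^1$, which the touching-point computation requires.
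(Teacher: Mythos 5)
Your proposal is correct and takes a genuinely different route from the paper. The paper follows the sliding technique of Coville--D\'avila--Mart\'inez: after establishing the asymptotics, it translates $\psi_1$ by $\nu$ until $\psi_1(\cdot-\nu)\geq\psi_2$, then shows the infimal admissible $\nu$ is $0$; the hard step is when the translate of $\psi_1$ \emph{touches} $\psi_2$ at some point, which the paper resolves via a strong maximum principle for the operator $L_\theta\varphi=J_\theta*\varphi-(\kap-\kn\theta)\varphi$ (imported from \cite{FT2017a} and using \eqref{as:aplus-aminus-is-pos} directly). You instead normalize by the Ikehara asymptotics so that $\psi_1/\psi_2\to1$ at $\pm\infty$, set $K_*=\sup\psi_1/\psi_2$, and evaluate the traveling-wave equation at a point where the supremum is attained. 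The crucial observation is that your touching-point identity
\[
\kap(\newaplus*z)(s_0)+K_*(K_*-1)\psi_2(s_0)\bigl[\kl\psi_2(s_0)+\kn(\newaminus*\psi_2)(s_0)\bigr]=\kn K_*\psi_2(s_0)(\newaminus*z)(s_0)
\]
(which I have verified) contains the \emph{strictly positive} factor $K_*-1$, so together with $\kap\newaplus\geq\kn\theta\newaminus$ and $\psi_1\leq\theta$ it already yields the contradiction at the level of a weak maximum principle; no Liouville-type argument and no Dini's theorem are needed. This makes the ratio approach shorter and structurally cleaner than the paper's; it uses \eqref{as:aplus-aminus-is-pos} only through Theorem~\ref{thm:trwexists}(4) (needed to invoke Propositions~\ref{prop:charfuncpropert} and~\ref{asymptexact}), not as the engine of the uniqueness argument. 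Two small precisions worth recording: $c\neq0$ is used not only for $C^1$ regularity but also as a hypothesis of Proposition~\ref{asymptexact}; and $\psi_2>0$ on all of $\R$, needed to form the ratio, already follows from the asymptotic $\psi_2(s)\sim s^{j-1}e^{-\mu s}>0$ at $+\infty$ together with $\psi_2$ being non-increasing, so strict monotonicity is not actually essential for this point.
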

\begin{proof}
  We will follow the sliding technique from \cite{CDM2008}. Let $\psi_1,\psi_2\in C^1(\R)\cap\M$ are traveling wave profiles with a speed $c\geq c_*$, $c\neq0$, cf.~Theorem~\ref{thm:trwexists}. By~Proposition~\ref{asymptexact} and Remark~\ref{rem:shifting}, we may assume, without lost of generality, that \eqref{eq:trw_asympt} holds for both $\psi_1$ and $\psi_2$ with $D=1$. By~the~proof of Proposition~\ref{prop:charfuncpropert}, the corresponding $j\in\{1,2\}$ depends on $\newaplusminus$, $\kapm$, $m$ only, and does not depend on the choice of $\psi_1$, $\psi_2$. By~Theorem~\ref{thm:speedandprofile}, $\sigma(\psi_1)=\sigma(\psi_2)=:\la_c\in (0,\infty)$.

\smallskip

  \textit{Step 1.} Prove that, for any $\tau>0$, there exists $T=T(\tau)>0$, such that
  \begin{equation}\label{ineqoutoftau}
    \psi^\tau_1(s):=\psi_1(s-\tau)>\psi_2(s), \quad s\geq T.
  \end{equation}

  Indeed, take an arbitrary $\tau>0$. Then \eqref{eq:trw_asympt} with $D=1$ yields
  \[
    \lim_{s\to\infty}\frac{\psi_1^\tau(s)}{(s-\tau)^{j-1}e^{-\la_c(s-\tau)}}=1=
    \lim_{s\to\infty}\frac{\psi_2(s)}{s^{j-1}e^{-\la_c s}}.
  \]
  Then, for any $\eps>0$, there exists $T_1=T_1(\eps)>\tau$, such that, for any $s>T_1$,
  \[
  \frac{\psi_1^\tau(s)}{(s-\tau)^{j-1}e^{-\la_c(s-\tau)}}-1>-\eps, \qquad
  \frac{\psi_2(s)}{s^{j-1}e^{-\la_c s}}-1<\eps.
  \]
  As a result, for $s>T_1>\tau$,
  \begin{align}
  &\quad \psi_1^\tau(s)-\psi_2(s)>(1-\eps)(s-\tau)^{j-1}e^{-\la_c(s-\tau)} -(1+\eps)s^{j-1}e^{-\la_c s}\notag\\
  &=s^{j-1}e^{-\la_c s}\biggl( \Bigl(1-\frac{\tau}{s}\Bigr)^{j-1}e^{\la_c \tau}-1 -\eps\Bigl(\Bigl(1-\frac{\tau}{s}\Bigr)^{j-1}e^{\la_c \tau} +1\Bigr)\biggr)\notag\\
  &\geq s^{j-1}e^{-\la_c s}\biggl( \Bigl(1-\frac{\tau}{T_1}\Bigr)^{j-1}e^{\la_c \tau}-1 -\eps\bigl(e^{\la_c \tau} +1\bigr)\biggr)>0,  \label{poswillbe}
  \end{align}
  if only
  \begin{equation}\label{smallepsmy}
      0<\eps<\frac{\Bigl(1-\dfrac{\tau}{T_1}\Bigr)^{j-1}e^{\la_c \tau}-1 }{e^{\la_c \tau} +1}=:g(\tau,T_1).
  \end{equation}

  For $j=1$, the nominator in the r.h.s. of \eqref{smallepsmy} is positive. For $j=2$, consider $f(t):= \bigl(1-\frac{t}{T_1}\bigr)e^{\la_c t}-1$, $t\geq0$. Then
 $f'(t)=\frac{1}{T_1}e^{\la_c t}(\la_c T_1-\la_c t-1)>0$, if~only $T_1>t+\frac{1}{\la_c}$, that implies $f(t)>f(0)=0$, $t\in\bigl(0,T_1-\frac{1}{\la_c}\bigr)$.

 As a result, choose $\eps=\eps(\tau)>0$ with $\eps< g\bigl(\tau,\tau+\frac{1}{\la_c}\bigr)$, then, without loss of generality, suppose that $T_1=T_1(\eps)=T_1(\tau)>\tau+\frac{1}{\la_c}>\tau$. Therefore, $0<\eps<g\bigl(\tau,\tau+\frac{1}{\la_c}\bigr)\leq g(\tau,T_1)$, that fulfills \eqref{smallepsmy}, and hence
\eqref{poswillbe} yields \eqref{ineqoutoftau}, with any $T>T_1$.

\smallskip

\textit{Step 2.} Prove that there exists $\nu >0$, such that, cf.~\eqref{ineqoutoftau},
  \begin{equation}\label{ineqeverywhere}
    \psi^\nu _1(s) \geq \psi_2(s), \quad s\in\R.
  \end{equation}

  Let $\tau>0$ be arbitrary and $T=T(\tau)$ be as above.
  Choose any $\delta\in\bigl(0,\frac{\theta}{4}\bigr)$.
  By  \eqref{eq:deftrw}, and the dominated convergence theorem,
\begin{equation}\label{convtotthetaconv}
    \lim\limits_{s\to-\infty} (\newaminus *\psi_2)(s)=\lim\limits_{s\to-\infty} \int_\R \newaminus (\tau)\psi_2(s-\tau)\,d\tau=\theta>\delta.
\end{equation}
  Then, one can choose $T_2=T_2(\delta)>T$, such that, for all $s<-T_2$,
  \begin{gather}\label{bigT2}
    \psi_1^\tau(s)>\theta-\delta,\\
    \kl\psi_2(s)+\kn(\newaminus *\psi_2)(s)>\delta.\label{bigT2conv}
  \end{gather}
  Note also that \eqref{ineqoutoftau} holds, for all $s\geq T_2>T$, as well.
  Clearly, for any $\nu\geq\tau$,
  \[
  \psi_1^\nu (s)=\psi_1(s-\nu )\geq\psi_1(s-\tau)>\psi_2(s), \quad s>T_2.
  \]
  Next, $\lim\limits_{\nu \to\infty}\psi_1^\nu (T_2)=\theta>\psi_2(-T_2)$ implies that there exists $\nu _1=\nu _1(T_2)=\nu _1(\delta)>\tau$, such that, for all $\nu >\nu _1$,
  \[
    \psi_1^\nu (s)\geq \psi_1^\nu (T_2)>\psi_2(-T_2)\geq \psi_2(s), \quad s\in[-T_2,T_2].
  \]
  Let such a $\nu >\nu _1$ be chosen and fixed.
  As a result,
  \begin{align}
  \psi_1^\nu (s)\geq \psi_2(s), \quad s\geq -T_2, \label{ineqright}\\
  \intertext{and, by \eqref{bigT2},}
  \psi_1^\nu (s)+\delta>\theta>\psi_2(s), \quad s<-T_2. \label{ineqleft}
  \end{align}
  For the $\nu >\nu _1$ chosen above, define
  \begin{equation}\label{phinu}
   \varphi_\nu (s):=\psi_1^\nu (s)-\psi_2(s), \quad s\in\R.
  \end{equation}
  To prove \eqref{ineqeverywhere}, it is enough to show that $\varphi_\nu (s)\geq0$, $s\in\R$.

  On the contrary, suppose that $\varphi_\nu $ takes negative values. By~\eqref{ineqright}, \eqref{ineqleft},
  \begin{equation}\label{varnubelow}
      \varphi_\nu (s)\geq-\delta, \quad s<-T_2; \qquad \varphi_\nu (s)\geq 0,\quad s\geq-T_2.
  \end{equation}
  Since $\lim\limits_{s\to-\infty} \varphi_\nu (s)=0$ and $\varphi_\nu \in C^1(\R)$, our assumption implies that there exists $s_0<-T_2$, such that
  \begin{equation}\label{minpointmy}
      \varphi_\nu (s_0)=\min\limits_{s\in\R}\varphi_\nu (s)\in [-\delta,0).
  \end{equation}
  We set also
  \begin{equation}\label{deltamy}
    \delta_*:=-\varphi_\nu (s_0)=\psi_2(s_0)-\psi_1^\nu(s_0)\in (0,\delta].
  \end{equation}

Next, both $\psi_1^\nu $ and $\psi_2$ solve \eqref{eq:trw}.
By \eqref{eq:defJtheta}, 
$\int_\R J_\theta(s)\,ds=\kap-\kn\theta$. Denote $L_\theta \varphi:=J_\theta *\varphi-(\kap{-}\kn\theta)\varphi$. Then one can rewrite \eqref{eq:trw} 
\[
  c\psi'(s) + ({L}_\theta\psi)(s) + (\theta-\psi(s)) \bigl(\kl\psi(s) + \kn(\newaminus *\psi)(s)\bigr)=0.
\]
Writing the latter equation for $\psi_1^\nu$ and $\psi_2$ and subtracting the results, one gets
\begin{equation}\label{neweqnpos}
  \begin{gathered}
  c\varphi_\nu'(s)+(L_\theta\varphi_\nu)(s)+A(s)=0,\\
  A(s):=(\theta-\psi_1^\nu(s)) \bigl(\kl\psi_1^\nu(s) + \kn(\newaminus *\psi_1^\nu)(s)\bigr) \\
			\qquad \qquad \qquad -(\theta-\psi_2(s)) \bigl(\kl\psi_2(s) + \kn(\newaminus *\psi_2)(s)\bigr).
  \end{gathered}
\end{equation}
Consider \eqref{neweqnpos} at the point $s_0$. By~\eqref{minpointmy},
\begin{equation}\label{maxprinciple}
\varphi_\nu'(s_0)=0, \qquad (L_\theta\varphi_\nu)(s_0)\geq0.
\end{equation}
 Next, \eqref{deltamy} yields
\begin{align}
  A(s_0)  = &
  (\theta-\psi_1^\nu(s_0)) \bigl(\kl\psi_1^\nu(s_0) + \kn(\newaminus *\psi_1^\nu)(s_0)\bigr) \notag\\
		&+ (\delta_*-(\theta-\psi_1^\nu(s_0)) \bigl(\kl\psi_2(s_0) + \kn(\newaminus *\psi_2)(s_0)\bigr) \notag\\
  =& (\theta-\psi_1^\nu(s_0)) \bigl(\kl\varphi_\nu(s_0) + \kn(\newaminus *\varphi_\nu)(s_0)\bigr) \notag\\
		&+\delta_* \bigl(\kl\psi_2(s_0)+\kn(\newaminus *\psi_2)(s_0)\bigr) \notag\\
  =& (\theta-\psi_1^\nu(s_0)) \bigl( \kl\varphi_\nu(s_0) + \kn (\newaminus *(\varphi_\nu+\delta_*))(s_0)\bigr) \notag\\
  	&+\delta_*\bigl( \kl\psi_2(s_0) + \kn(\newaminus *\psi_2)(s_0) -(\theta-\psi_1^\nu(s_0))\bigr) \notag\\
	>&0,\label{ufffffffff}
\end{align}
because of \eqref{minpointmy}, \eqref{bigT2}, and \eqref{bigT2conv}. The strict inequality in \eqref{ufffffffff} together with \eqref{maxprinciple} contradict to~\eqref{neweqnpos}. Therefore, \eqref{ineqeverywhere} holds, for any $\nu>\nu_1$.

\smallskip

\textit{Step 3.} Prove that, cf.~\eqref{ineqeverywhere},
  \begin{equation}\label{infiszero}
    \vartheta_*:=\inf\{\vartheta>0\mid \psi^\vartheta _1(s) \geq \psi_2(s), s\in\R\}=0.
  \end{equation}

On the contrary, suppose that $\vartheta_*>0$. Let $\varphi_*:=\varphi_{\vartheta_*}$ be given by \eqref{phinu}. By the continuity of the profiles, $\varphi_*\geq0$.

First, assume that $\varphi_*(s_0)=0$, for some $s_0\in\R$, i.e. $\varphi_*$ attains its minimum at $s_0$. Then \eqref{maxprinciple} holds with $\vartheta$ replaced by $\vartheta_*$, and, moreover, cf.~\eqref{neweqnpos},
\[
	A(s_0) = \kn(\theta-\psi_1^\vartheta(s_0))(\newaminus *\varphi_*)(s_0)\geq0.
\]
Therefore, \eqref{neweqnpos} implies
\begin{equation}\label{maxprLtheta}
  (L_\theta\varphi_*)(s_0)=0.
\end{equation}
By the same arguments as in the proof of Proposition~\ref{prop:infGisreached}, one can show that \eqref{as:aplus-aminus-is-pos} implies that the function $J_\theta$ also satisfies \eqref{as:aplus-aminus-is-pos}, for $d=1$, with some another constants.
Then, arguing in the same way as in the proof of \cite[Proposition 5.2]{FT2017a} (with $d=1$ and $\newaplus$ replaced by $J_\theta$), one gets that \eqref{maxprLtheta} implies
that $\varphi_*$ is a constant, and thus $\varphi_*\equiv0$, i.e. $\psi_1^{\vartheta_*}\equiv\psi_2$. The latter contradicts \eqref{ineqoutoftau}.

Therefore, $\varphi_*(s)>0$, i.e. $\psi_1^{\vartheta_*}(s)>\psi_2(s)$, $s\in\R$. By~\eqref{ineqoutoftau} and \eqref{convtotthetaconv},  there exists $T_3=T_3(\vartheta_*)>0$, such that
$\psi_1^{\frac{\vartheta_*}{2}}(s)>\psi_2(s)$, $s>T_3$, and also, for any $s<-T_3$,
\eqref{bigT2conv} holds and \eqref{ineqleft} holds with $\vartheta$ replaced by $\frac{\vartheta_*}{2}$ (for some fixed $\delta\in\bigl(0,\frac{\theta}{4}\bigr)$).
For any $\eps\in\bigl(0,\frac{\vartheta_*}{2}\bigr)$, $\psi_1^{\vartheta_*-\eps}\geq\psi_1^{\frac{\vartheta_*}{2}}$, therefore,
\[
\psi_1^{\vartheta_*-\eps}(s)>\psi_2(s), \quad s>T_3,
\]
and also \eqref{ineqleft} holds with $\vartheta$ replaced by $\vartheta_*-\eps$, for $s<-T_3$.
We set
\[
\alpha:=\inf\limits_{t\in[-T_3,T_3]}(\psi_1^{\vartheta_*}(s)-\psi_2(s))>0.
\]
Since the family $\bigl\{\psi_1^{\vartheta_*-\eps}\mid \eps\in\bigl(0,\frac{\vartheta_*}{2}\bigr)\bigr\}$ is monotone in $\eps$,
and $\lim\limits_{\eps\to0}\psi_1^{\vartheta_*-\eps}(t)=
\psi_1^{\vartheta_*}(t)$, $t\in\R$, we have, by Dini's theorem, that the latter convergence is uniform on $[-T_3,T_3]$. As~a~result, there exists $\eps=\eps(\alpha)\in \bigl(0,\frac{\vartheta_*}{2}\bigr)$, such that
\[
\psi_1^{\vartheta_*}(s)\geq\psi_1^{\vartheta_*-\eps}(s)\geq\psi_2(s), \quad s\in[-T_3,T_3].
\]
Then, the same arguments as in the Step~2 prove that $\psi_1^{\vartheta_*-\eps}(s)\geq\psi_2(s)$, for all $s\in\R$, that contradicts the definition \eqref{infiszero} of $\vartheta_*$.

As a result, $\vartheta_*=0$, and by the continuity of profiles, $\psi_1\geq\psi_2$. By the same arguments, $\psi_2\geq\psi_1$, that fulfills the statement.
\end{proof}

\section*{Acknowledgments}
Authors gratefully acknowledge the financial support by the DFG through CRC 701 ``Stochastic
Dynamics: Mathematical Theory and Applications'' (DF, YK, PT), the European
Commission under the project STREVCOMS PIRSES-2013-612669 (DF, YK), and the ``Bielefeld Young Researchers'' Fund through the Funding Line Postdocs: ``Career Bridge Doctorate\,--\,Postdoc'' (PT).

\section*{References}

\end{document}